\journal{Journal of Computational Physics}
\definecolor{darkgreen}{rgb}{0.412, 0.651, 0.388}
\newcommand{\Frac}[2]{\displaystyle \frac{#1}{#2}}
\newcommand{\deriv}[3]{\Frac{\partial^{#1} #2}{\partial {#3}^{#1}}}
\newcommand{\Deriv}[3]{\Frac{d^{#1} #2}{d {#3}^{#1}}}
\newcommand{\nabu}{{\boldsymbol \nabla}}
\newcommand{\grad}{\nabu}
\newcommand{\Div}{\grad \cdot}
\renewcommand{\vector}[1]{\boldsymbol{#1}}
\renewcommand{\matrix}[1]{\underline{\underline{\boldsymbol{#1}}}}
\newcommand{\dint}{\displaystyle\int}
\newcommand{\exP}{\mathscr{P}}
\newcommand{\exV}{\mathscr{V}}
\newcommand{\exC}{\mathscr{C}}
\newcommand{\revA}[1]{{\textcolor{black}{#1}}}
\newcommand{\revB}[1]{{\textcolor{black}{#1}}}
\newtheorem{remark}{Remark}
\newtheorem{theorem}{Theorem}
\begin{document}
\begin{frontmatter}

\title{Energy-based operator splitting approach for the time discretization of coupled systems of partial and ordinary differential equations for fluid flows: 
The Stokes case}

\author[LC]{Lucia Carichino}
\address[LC]{Department of Mathematical Sciences, Worcester Polytechnic Institute, \\
100 Institute Rd, Worcester, MA 01609, USA}

\author[GG]{Giovanna Guidoboni}
\address[GG]{Department of Electrical Engineering and Computer Science, University of Missouri,\\ 201 Naka Hall, Columbia, 65211, MO, USA}

\author[MS]{Marcela Szopos\corref{mycorrespondingauthor}}
\address[MS]{Universit\'e de Strasbourg, CNRS, IRMA UMR 7501, F-67000 Strasbourg, France}
\cortext[mycorrespondingauthor]{Corresponding author}
\ead{szopos@math.unistra.fr}

\begin{abstract}
The goal of this work is to develop a novel splitting approach for the numerical solution of multiscale problems involving the coupling between Stokes equations and ODE systems, 
as often encountered in blood flow modeling applications. The proposed algorithm is based on a semi-discretization in time based on operator splitting, whose design is guided by the 
rationale of ensuring that the physical energy balance is maintained at the discrete level. As a result, unconditional stability with respect to the time step choice is ensured by 
the implicit treatment of interface conditions within the Stokes substeps, whereas the coupling between Stokes and ODE substeps is enforced via appropriate initial conditions for 
each substep. Notably,  unconditional stability is attained without the need of subiterating between Stokes and ODE substeps. Stability and convergence properties of the proposed 
algorithm are tested on three specific examples for which analytical solutions are derived.

\end{abstract}

\begin{keyword}
multiscale fluid flow \sep operator splitting \sep partial and ordinary differential equations \sep blood flow simulations
\MSC[2010] 65M99 \sep 76D99 \sep 76Z99
\end{keyword}

\end{frontmatter}

\section{Introduction}

Multiscale coupling between systems of partial differential equations (PDEs) and ordinary differential equations (ODEs) is often necessary when modeling complex problems 
arising in science, engineering and medicine. In particular, the present work is motivated by applications to blood flow modeling through the cardiovascular system, even 
though the resulting conceptual framework may be meaningful and applicable to a more general context of hydraulic networks.

The coupling between PDEs and ODEs in blood flow modeling has been utilized in different ways depending on the specific modeling needs. Many studies have used 
Windkessel-like models \cite{westerhof2009} as boundary conditions for three-dimensional (\textsc{3d}) blood flow simulations in regions of particular interest, as in 
\cite{vignon2006,Kim2009,grinberg2008,chabannes2015}. 
In addition, ODEs have been used to provide systemic descriptions of the cardiovascular system where \textsc{3d} regions are embedded,
 as in \cite{pennati1997,lagana2005,Lau2015,carichino2014}. In all these applications, the PDE/ODE coupling leads to 
interface conditions enforcing the continuity of mass and the balance of forces, which should also be preserved at the discrete level when solving the problem numerically. 

Many strategies have been proposed for the numerical solution of coupled PDE/ODE systems in the context of blood flow modeling. 
In particular, monolithic and splitting (or partitioned) schemes have been proposed, where the PDE and ODE systems are solved simultaneously or in separate substeps,
respectively. An extensive discussion about advantages and limitations of monolithic and splitting approaches can be found in \cite{bertoglio2013,quarteroni2016}.
The present contribution focuses on splitting techniques and on the properties of their modular structure. Among the many interesting contributions in this area, we mention
here those that are most closely related to our work. In \cite{quarteroni2001}, Quarteroni \textit{et al} consider the multiscale coupling between the Navier-Stokes 
equations in a rigid domain and a lumped parameter model. A splitting strategy based on subiterations between PDE and ODE solvers at each time step is proposed 
and assessed in different meaningful configurations. The splitting formulation was then used as an effective tool to prove the well-posedness of the coupled 
problem \cite{quarteroni2003}, in combination with appropriate fixed point results. In \cite{fouchet2015}, Fouchet-Incaux \textit{et al} compare the numerical stability 
of explicit and implicit coupling between the Stokes or Navier-Stokes equations and circuit-based models containing resistances and capacitances.
Unconditional stability was proved in the implicit case, whereas conditional stability was proved in the explicit case. 
In \cite{moghadam2013}, Moghadam \textit{et al} propose a time implicit approach to couple general lumped parameter models with a finite-element based solution of 
a Navier-Stokes problem in a \textsc{3d} domain. The algorithm combines the stability properties of monolithic approaches with the modularity of splitting algorithms.
The method is based on a Newton type iterative scheme, where data are exchanged between the two domains at each Newton iteration of the nonlinear Navier-Stokes solver to 
ensure convergence of both domains simultaneously. To the best of our knowledge, the splitting schemes that have been proposed for coupled PDE/ODE systems
in the context of fluid flow modeling so far, require subiterations between substeps, usually involving the values of pressure and flow rate at the multiscale interfaces, in order to 
achieve convergence of the overall algorithm. Depending on the mathematical properties of the models
in each substep and of the coupling between them, the convergence of such subiterations might become an issue, especially in the case of nonlinear problems.

The present study aims at providing a novel splitting scheme for coupled PDE/ODE systems for fluid flow that does not require subiterations between substeps to achieve stability for 
the overall algorithm. The scheme stability follows from ensuring that the physical energy balance is maintained at the discrete level
via a suitable application of operator splitting techniques \cite{marchuk1990,glowinski2003} to semi-discretize the problem in time, as in 
\cite{glowinski2006,guidoboni2009}. As a result, the proposed algorithm allows us to: \textit{(i)} solve in separate substeps potential nonlinearities within the systems 
of PDEs and/or ODEs; \textit{(ii)} maintain some flexibility in choosing the numerical method for the solution of each subproblem; \textit{(iii)} ensure unconditional 
stability without the need of subiterations between substeps. 
\revA{Indeed, the physical consistency of the coupling conditions at the discrete level is a major issue in multiscale numerical simulations, which also includes the coupling between 
PDEs of different types arising, for example, in the context of \textsc{3d}-\textsc{1d} modeling of blood flow~\cite{formaggia2013physical}.}

We remark that, in the present work, we aim at developing the main skeleton of the splitting algorithm, focusing on the scheme stability and performance with respect 
to the time step size in the norms dictated by the energy balance of the system. Thus, even though the mathematical framework will be presented in a general way,
here we will focus on the Stokes problem as PDE system and resistive connections between Stokes regions and lumped networks. In addition, we will present numerical
results obtained in the case of  two-dimensional (\textsc{2d}) Stokes problems coupled with zero-dimensional (\textsc{0d}) lumped circuit models. However, the basic skeleton presented in 
this
article could serve as a starting point for further extensions and improvements, including other PDE models, {\it e.g.} Navier-Stokes or porous media,
and numerical variants that provide increased accuracy in time by including suitable time-extrapolations of quantities of particular interest, in the same spirit 
as~\cite{Fernandez2013,Fernandez2015}, or by symmetrization of the splitting algorithm as discussed in~\cite{glowinski2003}. 

The paper is organized as follows. The mathematical framework is described in Section~\ref{sec:math_model} and the energy identity for the fully coupled system
is derived in Section~\ref{sec:energy_identity}. The energy-based operator splitting approach is presented in 
Section~\ref{sec:splitting}, where we also study its stability properties with respect to the choice of the time step. Section~\ref{sec:num_res} explores 
the properties of the proposed splitting algorithm by comparing analytical and numerical solutions in three particular examples. Conclusions and future 
perspectives are outlined in Section~\ref{sec:extensions}.

\section{Mathematical model}
\label{sec:math_model}

The present work focuses on the coupling between systems of PDEs, representing the motion of an incompressible viscous fluid in a bounded domain $\Omega\subset\mathbb R^d$, 
$d=2$ or 3, and systems of ODEs, representing lumped descriptions of the flow of a viscous fluid through a complex hydraulic network. In order to maintain the focus on the 
splitting strategy with respect to the coupling interface conditions, in this article we will assume that $\Omega$ is a non-deformable domain 
and that \revB{the} fluid is 
Newtonian. Extensions to 
deformable domains and non-Newtonian fluids are beyond the scope of this article, even though they are within reach, 
as outlined in Section~\ref{sec:extensions}. \\

\noindent {\bf Geometrical architecture of the coupled system.} A schematic representation of the geometrical coupling considered in this paper is provided in 
Figure~\ref{fig:full_geom}. It consists of: \textit{(i)} $L$ regions of space denoted by $\Omega_l\subset \mathbb R^d$, with  $l\in \mathcal L=\{1,\dots,L\}$ and 
$d=2$ or 3, where the fluid flow is described by the Stokes equations; \textit{(ii)} $M$ lumped hydraulic circuits denoted by $\Upsilon_m$, with 
$m\in\mathcal M = \{ 1,\dots,M\}$, where the fluid flow is described by the hydraulic analog of  Kirchoff laws of currents and voltages.
In order to clarify the geometrical setting of the coupling conditions, let us assume that the boundary of each domain $\Omega_l$, denoted by $\partial \Omega_l$, is 
the union of three portions, namely 
$
\partial \Omega_l = \Gamma_{l}\cup \Sigma_{l}\cup S_{l},
$
where different types of boundary and interface conditions are imposed. 
Specifically, Dirichlet conditions are imposed on $\Gamma_l$, Neumann conditions are imposed on $\Sigma_l$ and Stokes-circuit coupling 
conditions are imposed on $S_l$, as described below.
In particular, each region $\Omega_l$ may have $j_{\Omega_l} \geq 1$  Stokes-circuit connections, implying that each boundary 
portion $S_{l}$ may be written as 
$
S_{l} = \bigcup_{m\in\mathcal {M}_l} S_{lm},
$
with $l\in \mathcal L$. We remark that, for each $l\in\mathcal L$, the set $\mathcal M_l \subseteq \mathcal M$ identifies the circuits $\Upsilon_m$ that are connected 
to $\Omega_l$. Similarly, each circuit $\Upsilon_m$ may have $j_{\Upsilon_m}\geq 1$  Stokes-circuit connections and the set $\mathcal L_m \subseteq \mathcal L$ identifies 
the Stokes regions $\Omega_l$ that are connected to $\Upsilon_m$. 
For example, in the specific architecture depicted in 
Figure~\ref{fig:full_geom}, we have that $j_{\Omega_1}=1$ and $\mathcal{M}_1 =\{1\}$, $j_{\Omega_2}=4$ and $\mathcal{M}_2 =\{1,2,3\}$, 
$j_{\Upsilon_3}=3$ and $\mathcal{L}_3 =\{2,3\}$, $j_{\Upsilon_4}=1$ and $\mathcal{L}_4 =\{3\}$. It may also happen that the same Stokes region $\Omega_l$ 
and the same circuit $\Upsilon_m$ enjoy multiple connections, as for $\Omega_2$ and $\Upsilon_3$ in Figure~\ref{fig:full_geom}. Thus, an
additional subscript is introduced to distinguish between the various connections, so that we can write
$
S_{lm} = \bigcup_{k=1}^{j_{\Omega_l,\Upsilon_m}} S_{lm,k}, 
$
where $j_{\Omega_l,\Upsilon_m}$ is the total number of connections between $\Omega_l$ and $\Upsilon_m$. For example, for the architecture depicted in 
Figure~\ref{fig:full_geom}, we have $j_{\Omega_2,\Upsilon_3}=2$. Note that with these notations, we also have
$ j_{\Omega_l} = \sum _{m\in \mathcal{M}_l} j_{\Omega_l,\Upsilon_m}$ and $j_{\Upsilon_m} = \sum _{l\in \mathcal{L}_m} j_{\Omega_l,\Upsilon_m}$.

We remark that, for the cases considered in this study: 
\textit{(i)} Dirichlet conditions are imposed on a (at least) portion of the boundary of each domain $\Omega_l$, namely $\Gamma_l \neq \emptyset$; 
\textit{(ii)}
each domain $\Omega_l$ is connected to (at least) one circuit, namely $S_l \neq \emptyset$; and
\textit{(iii)}  Neumann conditions may not be imposed on the boundary of $\Omega_l$, namely $\Sigma_l = \emptyset$, as it happens in Example 3 described hereafter.\newline
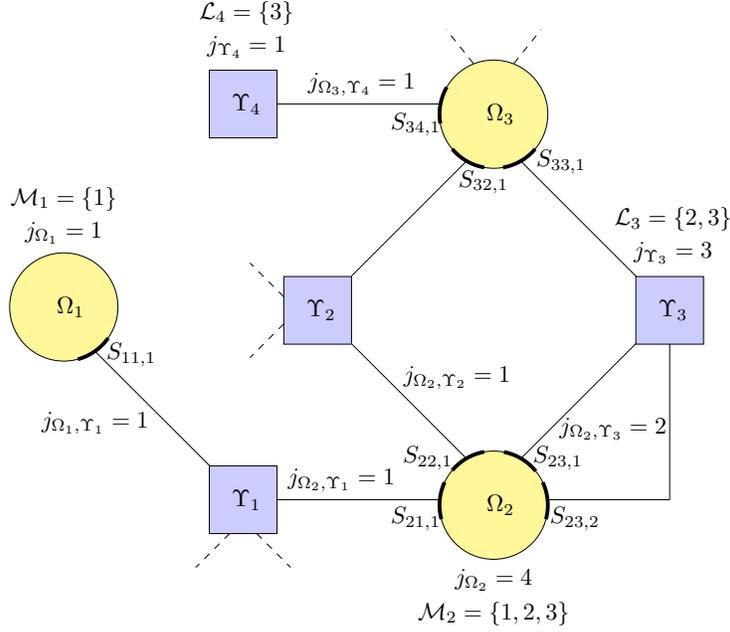
\begin{figure}[t]
\begin{center}
\scalebox{0.9}{
 \begin{tikzpicture}
\begin{scope}
\filldraw[fill=yellow!50!white, draw=black] (0,0) circle (.8cm);
 \node at (0.1,0) {$\Omega_1$};
  \draw [black,ultra thick,domain=-75:-35] plot ({.8*cos(\x)}, {.8*sin(\x)});
  \node at (1.,-0.7) {$S_{11,1}$};
  \node at (0.0,1.1) {$j_{\Omega_1}=1$};
  \node at (0.0,1.6) {$\mathcal{M}_1 =\{1\}$};  
 \end{scope}
 \begin{scope}[shift={(0.6,-1.5)},rotate = -45]
    \draw (-0.7, 0.5) -- (1.7,0.5); 
\coordinate [label=left:\textcolor{black}{$j_{\Omega_1,\Upsilon_1}=1 $}] (E) at (0.7,0.4);        
 \end{scope}  
\begin{scope}[shift={(2.15,-3.35)}]
\filldraw[fill=blue!20!white, draw=black] (0,0) rectangle (1,1);
 \node at (0.55,0.5) {$\Upsilon_1$};
 \draw[dashed] (0.3,0) -- (-0.2,-0.5);
  \draw[dashed] (0.7,0) -- (1.2,-0.5);
 \end{scope}
 \begin{scope}[shift={(3.85,-3.35)}]
    \draw (-0.7, 0.5) -- (1.7,0.5); 
      \coordinate [label=above:\textcolor{black}{$j_{\Omega_2,\Upsilon_1}=1 $}] (E) at (0.25,0.5);                  
 \end{scope} 
\begin{scope}[shift={(6.35,-2.93)}]
\filldraw[fill=yellow!50!white, draw=black] (0,0) circle (.8cm);
 \node at (0.1,0) {$\Omega_2$};
   \node at (0,-1.1) {$j_{\Omega_2}=4$};
   \node at (0,-1.6) {$\mathcal{M}_2 =\{1,2,3\}$};   
  \draw [black,ultra thick,domain=-15:25] plot ({.8*cos(\x)}, {.8*sin(\x)});
  \node at (1.2,-0.2) {$S_{23,2}$};      
   \draw [black,ultra thick,domain=39:79] plot ({.8*cos(\x)}, {.8*sin(\x)});
  \node at (0.95,0.7) {$S_{23,1}$};      
   \draw [black,ultra thick,domain=100:140] plot ({.8*cos(\x)}, {.8*sin(\x)}); 
  \node at (-0.97,0.7) {$S_{22,1}$};       
   \draw [black,ultra thick,domain=155:195] plot ({.8*cos(\x)}, {.8*sin(\x)}); 
  \node at (-1.15,-0.2) {$S_{21,1}$};         
 \end{scope}
 \begin{scope}[shift={(4.4,-1.4)},rotate = -45]
    \draw (-0.7, 0.5) -- (1.7,0.5); 
\coordinate [label=right:\textcolor{black}{$j_{\Omega_2,\Upsilon_2}=1 $}] (E) at (0.1,0.6);            
 \end{scope}
 \begin{scope}[shift={(7.6,-2.1)},rotate = 45]
    \draw (-0.7, 0.5) -- (1.7,0.5); 
 \end{scope}
\begin{scope}[shift={(3.25,-0.55)}]
\filldraw[fill=blue!20!white, draw=black](0,0) rectangle (1,1);
 \node at (0.55,0.5) {$\Upsilon_2$};
   \draw[dashed] (0,0.3) -- (-0.5,-0.2);
  \draw[dashed] (0,0.7) -- (-0.5,1.2);
 \end{scope}
\begin{scope}[shift={(8.45,-0.55)}]
\filldraw[fill=blue!20!white, draw=black] (0,0) rectangle (1,1);
 \node at (0.55,0.5) {$\Upsilon_3$};
  \node at (0.55,1.35) {$j_{\Upsilon_3}=3$};
  \node at (0.55,1.85) {$\mathcal{L}_3 =\{2,3\}$};
 \end{scope}
 \begin{scope}[shift={(5.1,0.6)},rotate = 45]
    \draw (-0.7, 0.5) -- (1.7,0.5); 
 \end{scope}
 \begin{scope}[shift={(6.9,1.3)},rotate = -45]
 \draw (-0.7, 0.5) -- (1.7,0.5);
 \end{scope}
\begin{scope}[shift={(6.35,2.85)}]
\filldraw[fill=yellow!50!white, draw=black] (0,0) circle (.8cm);
 \node at (0.1,0) {$\Omega_3$};
   \draw[dashed] (-0.3,0.75) -- (-0.7,1.25);
  \draw[dashed] (0.3,0.75) -- (0.7,1.25);
   \draw [black,ultra thick,domain=150:190] plot ({.8*cos(\x)}, {.8*sin(\x)}); 
  \node at (-1.15,-0.15) {$S_{34,1}$};   
    \draw [black,ultra thick,domain=-40:-80] plot ({.8*cos(\x)}, {.8*sin(\x)});
  \node at (1.,-0.7) {$S_{33,1}$};
    \draw [black,ultra thick,domain=-140:-100] plot ({.8*cos(\x)}, {.8*sin(\x)});
  \node at (-0.15,-1) {$S_{32,1}$};  
 \end{scope}
 \begin{scope}[shift={(7.85,-3.35)}]
 \draw (-0.7,0.5) -- (1.1,0.5); 
 \draw (1.1,0.5) -- (1.1,2.8);
 \coordinate [label=right:\textcolor{black}{$j_{\Omega_2,\Upsilon_3}=2 $}] (E) at (-0.65,1.55);                  
 \end{scope} 
 \begin{scope}[shift={(3.85,2.5)}]
 \draw (-0.7, 0.5) -- (1.7,0.5);
  \coordinate [label=above:\textcolor{black}{$j_{\Omega_3,\Upsilon_4}=1 $}] (E) at (0.55,0.5);                  
 \end{scope} 
\begin{scope}[shift={(2.15,2.5)}]
\filldraw[fill=blue!20!white, draw=black](0,0) rectangle (1,1);
 \node at (0.55,0.5) {$\Upsilon_4$};
  \node at (0.55,1.35) {$j_{\Upsilon_4}=1$};
  \node at (0.55,1.85) {$\mathcal{L}_4 =\{3\}$}; 
 \end{scope}
 \end{tikzpicture}
 }
\end{center}
\caption{Geometrical architecture of the coupled system. 
In each region of space $\Omega_l$, with $l\in \mathcal L=\{1,\dots,L\}$, the fluid flow is described by the Stokes equations. In each lumped compartment $\Upsilon_m$, 
with $m\in\mathcal M=\{1,\dots,M\}$, the fluid flow is described by the hydraulic analog of Kirchoff laws of currents and voltages. Multiple connections among Stokes domains and lumped 
circuits are allowed. Geometrical notations are reported on the graph for some components of the system.}
\label{fig:full_geom}
\end{figure}

\noindent {\bf Stokes problems in $\Omega_l$.} Let $\vector{v}_l=\vector{v}_l(\vector{x},t)$ and $p_l=p_l(\vector{x},t)$, for $l\in \mathcal L$, denote the velocity vector field and the pressure field, respectively, pertaining to the fluid in each 
domain $\Omega_l \times (0,T)$, with $\Omega_l\subset\mathbb R^d$, $d=2,3$ and $T>0$. Then, for $l\in \mathcal L$, we can write the Stokes equations as
\begin{align}
\Div \vector{v}_l& =0 & \mbox{in }\Omega_l \times (0,T), \label{eq:st_div}\\
\rho \deriv{}{\vector{v}_l}{t} &= -\nabla p_l + \mu \Delta \vector{v}_l + \rho \vector{f}_l  & \mbox{in }\Omega_l \times (0,T), \label{eq:st_lm}
\end{align}
where $\rho$ and $\mu$ are positive given constants representing the fluid density and dynamic viscosity, respectively, and $\vector{f}_l$ are given body forces per unit of mass. The system 
is equipped with the \textit{initial conditions}
\begin{align}
\vector{v}_l(\vector{x},t)&=\vector{v}_{l,0}(\vector{x}) & \mbox{ in }\Omega_l, \label{eq:st_ic}
\end{align}
and the \textit{boundary and interface conditions}
\begin{align}
&\vector{v}_l=\vector{0} & \mbox{on } \Gamma_{l}\times(0,T),  \label{eq:st_bc_wall}\\
&\Big(-p_l\matrix{I}+\mu\nabla \vector{v}_l\Big)\vector{n}_l= - \overline{p}_l\vector{n}_l & \mbox{on } \Sigma_{l}\times(0,T),  \label{eq:st_bc_in}\\
&\Big(-p_l\matrix{I}+\mu\nabla \vector{v}_l\Big)\vector{n}_{lm,k} =  \vector{g}_{lm,k} & \mbox{on } S_{lm,k}\times(0,T), \label{eq:st_ic_p}
\end{align}
where $\matrix{I}$ is the $d\times d$ identity tensor, $\vector{n}_l$ is the outward unit normal vector to $\Sigma_l$,
and $\overline{p}_l=\overline{p}_l(t)$ are given functions of time. For $m\in\mathcal M_l$ and $k=1, \ldots,j_{\Omega_l,\Upsilon_m}$, 
the vector $\vector{n}_{lm,k}$ denotes the outward unit normal vector to $S_{lm,k}$ and the functions $\vector{g}_{lm,k}$ are defined via the coupling 
conditions \eqref{eq:st_ic_p_coupling}.\\

\noindent {\bf Lumped hydraulic circuits in $\Upsilon_m$.} Let the dynamics in each lumped hydraulic circuit $\Upsilon_m$, for $m \in \mathcal M=\{1, \dots, M\}$, be 
described by the vector $\vector{y}_m = [y_{m1},y_{m2},\cdots,y_{m d_m}]^T$ of state variables satisfying the following system of (possibly nonlinear) ODEs
\begin{equation}
 \label{eq:circuit:m}
 \frac{ d\vector{y}_m}{dt} = \matrix{A}_m (\vector{y}_m,t) \vector{y}_m + \vector{r}_m(\vector{y}_m,t) 
\end{equation}
equipped with the initial conditions
\begin{equation}
 \label{eq:circuit:in_cond}
\vector{y}_m (t=0) = \vector{y}_{m,0},
\end{equation}
where  $\vector{y}_m$ and $\vector{r}_m$ are $d_m$-dimensional vector-valued functions and  $\matrix{A}_m$ is a $d_m\times d_m$ tensor. 
The tensor $\matrix{A}_m$ embodies topology and physics of the connections among the circuit nodes and the vector-valued function $\vector{r}_m$ comprises 
two contributions: \textit{(i)} sources and sinks within the circuit, including generators of current and voltage; and \textit{(ii)} connections with Stokes regions.
In this study, we will focus on lumped circuits involving resistive, capacitive and inductive elements, also known as RCL circuits. As a consequence, typical choices 
for the electrical state variables would be potential, voltage, charge, current or magnetic flux, which, in hydraulic terms, correspond to pressure, pressure difference, 
volume, volumetric flow rate or linear momentum flux. Since these state variables are characterized by different physical units, it follows that  the ODE system in \eqref{eq:circuit:m} is not homogeneous in terms of units.
\begin{remark} \label{rem:units}
The physical units of state variables and equations in system \eqref{eq:circuit:m} differ also depending on whether the circuits are 
coupled with \textsc{3d} or \textsc{2d}  Stokes regions, as summarized in Table~\ref{table:units_circuit}. In the \textsc{3d} case, namely when 
$\Omega_l\subset \mathbb R^d$ with $d=3$ for all $l\in\mathcal L$, the physical units of the state variables are kg~m$^{-1}$s$^{-2}$ for pressure and pressure difference, 
m$^3$ for volume, m$^3$s$^{-1}$ for flow rate and kg~m$^{-1}$s$^{-1}$ for linear momentum flux. Consequently, the physical units for the corresponding differential equations 
are kg~m$^{-1}$s$^{-3}$ if the state variable is a pressure or a pressure difference, m$^3$s$^{-1}$ if the state variable is a volume, m$^3$s$^{-2}$ if the state variable 
is a flow rate and kg~m$^{-1}$s$^{-2}$ if the state variable is a linear momentum flux. In the \textsc{2d} case, namely when $\Omega_l\subset \mathbb R^d$ with $d=2$ 
for all $l\in\mathcal L$, pressures and pressure differences are physical variables whose units are still kg~m$^{-1}$s$^{-2}$ and whose corresponding differential equations 
have the units of kg~m$^{-1}$s$^{-3}$. However, volumes and volumetric flow rates should be interpreted as quantities per unit of length, and therefore their units  in the \textsc{2d}  case are m$^2$ and m$^2$s$^{-1}$, respectively, and their corresponding differential equations have the units of m$^2$s$^{-1}$ and m$^2$s$^{-2}$, respectively. 
Note that the linear momentum flux is a quantity per unit of area, hence, in the \textsc{2d}  case its unit is still kg~m$^{-1}$s$^{-1}$ and the corresponding 
differential equations have the units of kg~m$^{-1}$s$^{-2}$. This remark is very important to fully understand the meaning of the coupling conditions detailed 
hereafter.\\
\end{remark}

\begin{table}
\centering
\scalebox{0.9}{
\begin{tabular}{ccccc}
  \toprule
{\it State variable}	&  \multicolumn{2}{c}{\it Units of state variable} & \multicolumn{2}{c}{\it Units of differential equation} \\
{\small (Coupling dimension)}	& $(\Omega_l\in\mathbb R^2)$ & $(\Omega_l\in\mathbb R^3)$ &  $(\Omega_l\in\mathbb R^2)$& $(\Omega_l\in\mathbb R^3)$ \\
  \midrule
pressure  & \multicolumn{2}{c}{kg m$^{-1}$s$^{-2}$} & \multicolumn{2}{c}{kg m$^{-1}$s$^{-3}$}  \\
pressure difference  & \multicolumn{2}{c}{kg m$^{-1}$s$^{-2}$} & \multicolumn{2}{c}{kg m$^{-1}$s$^{-3}$}  \\
volume  & m$^{2}$   & m$^{3}$ & m$^{2}$s$^{-1}$ & m$^{3}$s$^{-1}$\\
flow rate & m$^{2}$s$^{-1}$ & m$^{3}$s$^{-1}$ & m$^{2}$s$^{-2}$ & m$^{3}$s$^{-2}$\\
linear momentum flux & \multicolumn{2}{c}{kg m$^{-1}$s$^{-1}$} & \multicolumn{2}{c}{kg m$^{-1}$s$^{-2}$}\\
  \bottomrule
\end{tabular}
}
\caption{Physical units of state variables for the lumped hydraulic circuits and their corresponding differential equations, in the case where the circuits are 
coupled with two-dimensional or three-dimensional Stokes regions, namely $(\Omega_l\in\mathbb R^2)$ or $(\Omega_l\in\mathbb R^3)$, respectively.}\label{table:units_circuit}
\end{table}

\noindent {\bf Coupling conditions.} A domain $\Omega_l$ is connected to a lumped circuit $\Upsilon_m$ via the interfaces $S_{lm,k}$, with $k=1, \ldots,
j_{\Omega_l,\Upsilon_m}$ as indicated in Figure~\ref{fig:zoom_coupling_conditions}, where we impose the condition
\begin{equation}\label{eq:st_ic_p_coupling}
\vector{g}_{lm,k}(\vector{x},t) = - P_{lm,k}(t) \vector{n}_{lm,k} (\vector{x})  \quad \mbox{for}\quad \vector{x}\in S_{lm,k}  \quad \mbox{and} \quad t\in (0,T),
\end{equation}
where $P_{lm,k}$ is the pressure at the node of the circuit sitting on $S_{lm,k}$. Under some geometric assumptions on the domain, this condition corresponds to imposing 
that the average pressure on the interface $S_{lm,k}$ is equal to the nodal pressure, see \cite{fouchet2014,heywood1996,carichino2016thesis}. In addition, the  continuity 
of mass, and consequently flow rate, across $S_{lm,k}$ implies that 
\begin{equation} \label{eq:st_ic_q}
Q_{lm,k}(t)  = \dint_{S_{lm,k}}\vector{v}_l (\vector{x},t) \cdot \vector{n}_{lm,k} (\vector{x})dS_{lm,k} \quad \mbox{for} \quad t\in (0,T). 
\end{equation}
For $l\in \mathcal L_m$ and $k=1,\ldots,j_{\Omega_l,\Upsilon_m}$, each term $Q_{lm,k}$ contributes as source/sink for the circuit $\Upsilon_m$; thus, it is convenient 
to rewrite $\vector{r}_m$ in \eqref{eq:circuit:m} as 
\begin{equation}\label{eq:coupling_r_circuit}
 \vector{r}_m(\vector{y}_m,Q_{lm,k},P_{lm,k},t) = \vector{s}_m(\vector{y}_m,t) + \vector{b}_m(Q_{lm,k},P_{lm,k},t),
\end{equation}
where $\vector{s}_m(\vector{y}_m,t)$ represents the contribution of sources and sinks within the circuit (generators of current and voltage) and 
$\vector{b}_m(Q_{lm,k},P_{lm,k},t)$ gathers all contributions from the $j_{\Upsilon_m}$ Stokes-circuit connections. \\

\begin{figure}[htp]
\begin{center}
\scalebox{1}{
 \begin{tikzpicture}
\begin{scope}[shift={(-0.15,0)}]
\filldraw[fill=yellow!50!white, draw=black] (0,0) circle (.8cm);
 \node at (0.1,0) {$\Omega_l$};
 \draw [black,ultra thick,domain=-45:45] plot ({.8*cos(\x)}, {.8*sin(\x)});
  \node at (1.,-0.7) {$S_{lm,k}$};
 \end{scope}
\begin{scope}[shift={(-0.8,0)}]
\filldraw[fill=black] (1.65,0) circle(0.03);
\coordinate [label=above:\textcolor{black}{$P_{lm,k}$}] (E) at (1.9,0.0);  
\draw (1.65,0) -- (3.3,0);
\draw[ultra thick,->,red] (1.65,-0.2) -- (2.65,-0.2);
\coordinate [label=center:\textcolor{red}{$Q_{lm,k}$}] (E) at (2.85,-0.6);  
\end{scope}
\begin{scope}[shift={(2.5,-0.5)}]
\filldraw[fill=blue!20!white, draw=black] (0,0) rectangle (1,1);
 \node at (0.55,0.5) {$\Upsilon_m$};
 \end{scope}
 \end{tikzpicture}}
\end{center}
\caption{Schematic representation of the coupling between the Stokes region $\Omega_l$ and the lumped circuit $\Upsilon_m$. Coupling conditions for the pressure 
$P_{lm,k}$ and the flow rate $Q_{lm,k}$ should be imposed on the interface $S_{lm,k}$.}
\label{fig:zoom_coupling_conditions}
\end{figure}
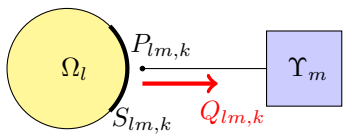
\noindent {\bf Fully coupled problem.} The fully coupled problem consists in finding $\vector{v}_l$, $p_l$, $P_{lm,k}$, 
$Q_{lm,k}$ and $\vector{y}_m$, for $l\in \mathcal{L},
\ m\in \mathcal{L}$ and $k=1, \ldots,j_{\Omega_l,\Upsilon_m}$ satisfying equations \eqref{eq:st_div}, \eqref{eq:st_lm} and \eqref{eq:circuit:m}, subject to the 
coupling conditions \eqref{eq:st_ic_p}, \eqref{eq:st_ic_p_coupling} and \eqref{eq:st_ic_q}, the boundary conditions \eqref{eq:st_bc_wall} and \eqref{eq:st_bc_in},
and the initial conditions \eqref{eq:st_ic} and \eqref{eq:circuit:in_cond}. Existence of solutions to the fully coupled problem has been proved in some 
particular cases. For example, in \cite{quarteroni2003}, Quarteroni \textit{et al} proved local in time existence of a solution when the connections are made through 
bridging regions. In \cite{baffico2010}, Baffico \textit{et al} proved the existence of a strong solution for small data when the Navier-Stokes equations are connected with a single resistor.

\section{Energy identity of the fully coupled problem} 
\label{sec:energy_identity}

The fully coupled problem satisfies an energy identity that embodies the main mechanisms governing the physics of the system. Let us begin by introducing the norms that will be 
used in the sequel. For $d=2$ or $3$, $d_z\in \mathbb{N}^*$, $T>0$ and $\Omega \subset \mathbb{R}^d$, given the vector valued function $\vector{z} \colon (0,T) \to \mathbb{R}^{d_z}$, 
the vector field $\vector{\Phi} \colon \Omega \times (0,T) \to \mathbb{R}^d$ and the tensor field $\matrix{K} \colon \Omega_l \times (0,T) \to \mathbb{R}^{d_z\times d_z}$, we introduce 
the following notations
\begin{equation}
\label{eq:norms}
\|\vector{z}\| = \sqrt{\sum_{i=1}^{d_z}z_i^2}\,,\ \ 
\| \vector{\Phi} \|_{L^2(\Omega)} = \sqrt{\dint_{\Omega} \vector{\Phi}\cdot \vector{\Phi}\, d\Omega} 
\ \  \mbox{and}\ \ 
\| \matrix{K} \|_{L^2(\Omega)} = \sqrt{\dint_{\Omega} \matrix{K}\colon \matrix{K}\, d\Omega} \, ,
\end{equation}
where the spaces $\mathbb{R}^d$ and $\mathbb{R}^{d_z\times d_z}$ are endowed with the usual Euclidean \revB{inner} products and, for the sake of clarity, the time dependence is omitted.
The notation $L^2(\Omega)$ denotes the space of square integrable real functions and the corresponding space of vector valued functions $[L^2(\Omega)]^d$ is denoted
by $\mathbf{L^2}(\Omega)$. By a slight abuse of notations, we will use in the sequel the same symbol $\|~\cdot~\|_{L^2(\Omega)}$ for the associated norms, see \eqref{eq:norms}. 
It is useful to recall that, given $\matrix{K}\in\mathbb R^{d_z\times d_z}$ symmetric and positive definite, then we can define the norm
$
\|\matrix K^{1/2}\vector{z}\| = \sqrt{\vector{z}^T\,\matrix{K}\,\vector{z}},
$
where the superscript $T$ means transpose.

Let us now proceed to derive the energy identity for the coupled system. For each $l\in\mathcal L$, let us multiply \eqref{eq:st_lm} by $\vector{v}_l$ in $\mathbf{L^2}(\Omega_l)$ and integrate over $\Omega_l$. 
After utilizing the divergence free condition \eqref{eq:st_div} and the boundary and interface conditions \eqref{eq:st_bc_wall}-\eqref{eq:st_ic_p}, we obtain:
\begin{align}
\frac{\rho}{2}\frac{d}{dt} \| {\vector{v}_l}\|^2_{L^2(\Omega_l)} 
+ \mu  \| \nabla {\vector{v}_l}\|^2_{L^2(\Omega_l)} = &
 \rho \int_{\Omega_l}\vector{f}_l \cdot \vector{v}_l\,d\Omega_l 
 - \overline{p}_l\int_{\Sigma_l} \vector{v}_l\cdot \vector{n}_l \,d\Sigma_l \\
& + \sum_{m\in\mathcal M_l} \sum_{k=1}^{j_{\Omega_l,\Upsilon_m}}\int_{S_{lm,k}} \vector{v}_l\cdot \vector{g}_{lm,k} \,dS_{lm,k} \,.\nonumber
\end{align}
The last term on the right hand side can be further manipulated using the coupling conditions \eqref{eq:st_ic_p_coupling} and \eqref{eq:st_ic_q} to write
\begin{align}
\int_{S_{lm,k}} \vector{v}_l\cdot \vector{g}_{lm,k} \,dS_{lm,k} =
- P_{lm,k}\int_{S_{lm,k}} \vector{v}_l\cdot \vector{n}_{lm,k} \,dS_{lm,k} =
- P_{lm,k} Q_{lm,k}\,.
\end{align}
Combining the above relationships, for each $l\in\mathcal L$ we obtain
\begin{align}\label{eq:en_stokes_l}
\frac{\rho}{2}\frac{d}{dt} \| {\vector{v}_l}\|^2_{L^2(\Omega_l)} 
+ \mu  \| \nabla {\vector{v}_l}\|^2_{L^2(\Omega_l)} = 
 \rho \int_{\Omega_l}\vector{f}_l \cdot \vector{v}_l\,d\Omega_l 
 - \overline{p}_l\int_{\Sigma_l} \vector{v}_l\cdot \vector{n}_l \,d\Sigma_l \\
 - \sum_{m\in\mathcal M_l} 
 \sum_{k=1}^{j_{\Omega_l,\Upsilon_m}} P_{lm,k} Q_{lm,k} \,.\nonumber
\end{align}

Let us now consider the system of differential equations in \eqref{eq:circuit:m} describing the dynamics of $\Upsilon_m$, for $m\in\mathcal M$. Since the equations might not be homogeneous in terms of physical units, see Remark~\ref{rem:units}, for each $m\in\mathcal M$ we need to perform the \revB{inner} product between \eqref{eq:circuit:m} and the vector valued function $\matrix{U}_m \vector{y}_m$, in such a way that each of the resulting scalar equations has the physical dimensions of a rate of change of energy, namely Kg~m$^{2}$ s$^{-3}$, in the case of \textsc{3d} Stokes regions, or that of a rate of change of energy per unit length, namely 
Kg~m~s$^{-3}$, in the case of \textsc{2d} Stokes regions.
Thus, the tensor $\matrix{U}_m$ is diagonal and its entries $U_{mj}$, with $j=1,\dots,d_m$, depend on the particular choice for the corresponding state variable. More precisely, for any $m\in\mathcal M$ and for $j=1,\dots,d_m$ 
\begin{itemize}
\item if $y_{mj}$ is a pressure or a pressure difference, then $U_{mj}$ is a capacitance;
\item if $y_{mj}$ is a volume, then $U_{mj}$ is the inverse of a capacitance;
\item if $y_{mj}$ is a flow rate, then $U_{mj}$ is an inductance;
\item if $y_{mj}$ is a linear momentum flux, then $U_{mj}$ is the inverse of an inductance.
\end{itemize}
The appropriate choices for $U_{mj}$ have been summarized in Table~\ref{table:units}, along with the corresponding physical units in the cases where the circuit is connected 
to \textsc{2d} or \textsc{3d} Stokes regions. We remark that the specific choice for capacitances and inductances appearing in $U_{mj}$ is determined by the corresponding 
circuit element pertaining to $y_{mj}$, as detailed in the examples hereafter. 
\begin{table}
\centering
\scalebox{0.9}{
\begin{tabular}{cccc}
 \toprule
$y_{mj}$ 			&  $U_{mj}$ & \multicolumn{2}{c} {\it physical units}   \\
& {\small (Coupling)}  & $(\Omega_l\in\mathbb R^2)$ &  $(\Omega_l\in\mathbb R^3)$  \\
 \midrule
pressure  & capacitance & kg$^{-1}$ m$^3$ s$^2$ & kg$^{-1}$ m$^4$ s$^2$\\
pressure difference  & capacitance & kg$^{-1}$ m$^3$ s$^2$ & kg$^{-1}$ m$^4$ s$^2$\\
volume &  inverse of a capacitance & kg m$^{-3}$ s$^{-2}$ & kg m$^{-4}$ s$^{-2}$\\
flow rate &  inductance & kg m$^{-3}$  & kg m$^{-4}$  \\
linear momentum flux & inverse of an inductance & kg$^{-1}$ m$^{3}$  & kg$^{-1}$ m$^{4}$  \\
\bottomrule
\end{tabular}
}
\caption{Summary of appropriate choices for the diagonal entries $U_{mj}$ of the tensor $\matrix{U}_m$ depending on the corresponding state variable $y_{mj}$. 
The physical units for $U_{mj}$ are listed for the cases where the circuit is coupled with two- or three-dimensional Stokes regions.}\label{table:units}
\end{table}
We also remark that, in general, $\matrix{U}_m = \matrix{U}_m(\vector{y}_m,t)$. Thus, performing the scalar product between \eqref{eq:circuit:m} and $\matrix{U}_m \vector{y}_m$ we obtain:
\begin{align}
\frac{1}{2}\frac{d}{dt} (\vector{y}_m^T \matrix{U}_m\vector{y}_m) + \vector{y}_m^T \matrix{B}_m \vector{y}_m = 
\vector{r}_m^T \matrix{U}_m \vector{y}_m,
\end{align}
where 
\begin{equation}
\label{eq:Bm}
\matrix{B}_m  = -\matrix{U}_m \matrix{A}_m - \frac{1}{2}\frac{d}{dt} \matrix{U}_m \,.
\end{equation}
Using the coupling conditions \eqref{eq:coupling_r_circuit} and the fact that $\matrix{U}_m$ is symmetric and positive definite, we finally obtain
\begin{align}\label{eq:en_circ_m}
\frac{1}{2}\frac{d}{dt} \| \matrix{U}_m^{1/2}\vector{y}_m\|^2 + \vector{y}_m^T \matrix{B}_m \vector{y}_m = 
\vector{b}_m^T \matrix{U}_m \vector{y}_m + \vector{s}_m^T \matrix{U}_m \vector{y}_m \,.
\end{align}
Now, summing \eqref{eq:en_stokes_l} over $l\in \mathcal{L}$, \eqref{eq:en_circ_m} over $m\in \mathcal{M}$, and adding the resulting equations, we obtain 
the following energy identity for the fully coupled system
\begin{equation}
\label{eq:full_energy}
\frac{d}{dt} \Big( \mathcal E_{\Omega} + \mathcal E_{\Upsilon}\Big) + \mathcal D_\Omega + \mathcal U_\Upsilon = \mathcal F_\Omega + \mathcal F_\Upsilon +  \mathcal G,
\end{equation}
where
\begin{align}
& \mathcal E_\Omega = \frac{1}{2} \sum_{l\in \mathcal L} \rho \| {\vector{v}_l}\|^2_{L^2(\Omega_l)}, 
\qquad \qquad \quad
\mathcal E_\Upsilon =\frac{1}{2} \sum_{m\in\mathcal M}\| \matrix{U}_m^{1/2}\vector{y}_m\|^2, \label{eq:E_OU}\\
&\mathcal D_\Omega =  \sum_{l\in \mathcal L} \mu \| \nabla {\vector{v}_l}\|^2_{L^2(\Omega_l)},
\qquad \qquad \quad
\mathcal U_\Upsilon= \sum_{m\in\mathcal M}\vector{y}_m^T \matrix{B}_m \vector{y}_m,\\
&
\mathcal F_\Omega =  \sum_{l\in \mathcal L}\left( \rho \int_{\Omega_l}\vector{f}_l \cdot \vector{v}_l\,d\Omega_l 
 - \overline{p}_l\int_{\Sigma_l} \vector{v}_l\cdot \vector{n}_l \,d\Sigma_l \right),\\
&
 \mathcal F_\Upsilon =  \sum_{m\in\mathcal M} \vector{s}_m^T \matrix{U}_m \vector{y}_m,\\
&
\mathcal G = - \sum_{\substack{m\in\mathcal M_l\\l\in\mathcal L}} \sum_{k=1}^{j_{\Omega_l,\Upsilon_m}}
  P_{lm,k}Q_{lm,k} + \sum_{m\in\mathcal M}\vector{b}_m^T \matrix{U}_m \vector{y}_m. \label{eq:energy_g}
\end{align}
$\mathcal E_\Omega$ represents the total kinetic energy in the Stokes regions, $\mathcal E_\Upsilon$
represents the total energy (kinetic + potential) characterizing the lumped circuits, $\mathcal D_\Omega $
represents the viscous dissipation in the Stokes regions, $\mathcal U_\Upsilon$
represents all the contributions from resistive, capacitive and inductive elements in the lumped circuits, 
$\mathcal F_\Omega$ represents the forcing on the system due to body forces  and external pressures acting on the Stokes regions,
$\mathcal F_\Upsilon$ represents the forcing on the system due to generators of current and voltage within the lumped circuits, and $\mathcal G$
represents the contribution from the Stokes-circuit connections.

We emphasize that $\mathcal E_\Omega(t) \geq0$, $\mathcal E_\Upsilon(t) \geq 0$ and $\mathcal D_\Omega(t)\geq0$ for all $t$, whereas the sign of $\mathcal U_\Upsilon (t)$ 
depends on the properties of the tensor $\matrix{B}_m$. The functionals $\mathcal F_\Omega$ and $\mathcal F_\Upsilon$ do not have a definite sign since they depend on the external forcing. 
Lastly, the functional form of $\mathcal G$ depends on the type of lumped elements involved in the  Stokes-circuit connections. \\

\noindent {\bf The case of resistive connections.} In order to clearly elucidate the main rationale behind the proposed splitting algorithm, in this article we will focus on resistive 
connections between Stokes regions and lumped circuits. Resistive connections are among the most common in blood flow modeling \cite{quarteroni2016}. In some applications, though, 
capacitive and inductive elements might be needed. Capacitive connections are used when the fluid pressure in $\Omega_l$ influences the fluid flow in $\Upsilon_m$ and, simultaneously, the 
fluid pressure in $\Upsilon_m$ influence the fluid flow in  $\Omega_l$, without having actual fluid flow between  $\Omega_l$ and $\Upsilon_m$. Thus, some portions 
of the boundary of $\Omega_l$ must be deformable, leading to a fluid-structure interaction problem that goes beyond the scope of this article and might be considered 
as future research direction, as outlined in Section~\ref{sec:extensions}. Inductive connections are used when the regime of interest is such that inertial effects become important. 
Since in the present paper we are neglecting inertial effects by adopting the Stokes equations in each $\Omega_l$, we consistently neglect inertial effects in the connections between 
the Stokes regions and the lumped circuits. We remark that the particular elements allowed in the connections might lead to different initial problems, as pointed out in \cite{moghadam2013}. 
We also remark that resistive, inductive and capacitive elements may all be present in the lumped circuit $\Upsilon_m$.

Let us then consider the case where a resistor connects a Stokes region $\Omega_l$ with the circuit $\Upsilon_m$, therefore allowing us to choose pressures as state variables
at both ends. For the sake of simplicity, let us also assume that the resistor node within the circuit is set to ground via a capacitor, in the same spirit as~\cite{quarteroni2003},
as shown in Figure~\ref{fig:zoom_coupling_RC}. 
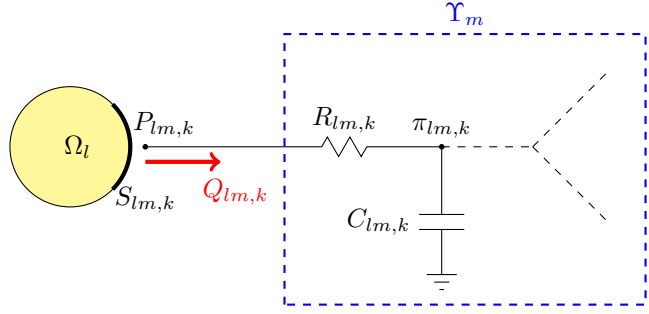
\begin{figure}[htb]
\begin{center}
\scalebox{1}{
 \begin{tikzpicture}[scale=1]
\begin{scope}[shift={(-0.15,0)}]
\filldraw[fill=yellow!50!white, draw=black] (0,0) circle (.8cm);
 \node at (0.1,0) {$\Omega_l$};
 \draw [black,ultra thick,domain=-45:45] plot ({.8*cos(\x)}, {.8*sin(\x)});
  \node at (1.,-0.7) {$S_{lm,k}$};
 \end{scope}
\begin{scope}[shift={(-0.8,0)}]
\filldraw[fill=black] (1.65,0) circle(0.03);
\coordinate [label=above:\textcolor{black}{$P_{lm,k}$}] (E) at (1.9,0.0);  
\draw (1.65,0) -- (4,0);
\draw[ultra thick,->,red] (1.65,-0.2) -- (2.65,-0.2);
\coordinate [label=center:\textcolor{red}{$Q_{lm,k}$}] (E) at (2.85,-0.6);  
\end{scope}

\begin{scope}[xscale=0.15,yscale=0.15,shift={(21.3,0)}, rotate=90]
  \draw[color=black,rotate=-90] (0,0) --(0.5,0.866025404) -- (1.5,-0.866025404) -- (2.5,0.866025404) -- (3.5,-0.866025404) -- (4,0);
\end{scope}
\draw[] (3.79,0) -- (4.79,0);
\filldraw[fill=black] (4.79,0) circle(0.03);
\coordinate [label=above:\textcolor{black}{$R_{lm,k}$}] (E) at (3.5,0.07);  
\coordinate [label=above:\textcolor{black}{$\pi_{lm,k}$}] (E) at (4.79,0.0);  

\draw[] (4.79,0) -- (4.79,-0.9);
\draw[] (4.49,-0.9) -- (5.09,-0.9);
\draw[] (4.49,-1.1) -- (5.09,-1.1);
\draw[] (4.79,-1.1) -- (4.79,-1.7);
\draw[] (4.59,-1.7) -- (4.99,-1.7);
\draw[] (4.69,-1.8) -- (4.89,-1.8);
\draw[] (4.76,-1.9) -- (4.82,-1.9);
\coordinate [label=left:\textcolor{black}{$C_{lm,k}$}] (E) at (4.5,-1.0);  

\draw[dashed] (4.79,0) -- (6,0);
\draw[dashed] (6,0) -- (7,1);
\draw[dashed] (6,0) -- (7,-1);
\coordinate [label=above:\textcolor{blue}{$\Upsilon_{m}$}] (E) at (5.1,1.5); 
\draw[dashed,thick,blue] (2.7,-2.1) rectangle (7.5,1.5);
 \end{tikzpicture}
 }
\end{center}
\caption{Schematic representation of the particular type of coupling between a Stokes region $\Omega_l$ and a circuit $\Upsilon_m$ considered in this work. A resistive connections between $\Omega_l$ and $\Upsilon_m$, in addition to the capacitive connection to the ground at the circuit side of the resistor, allows us to adopt pressures as state variables at both ends.}
\label{fig:zoom_coupling_RC}
\end{figure}
We denote the resistance and capacitance in the connection by $R_{lm,k}$ and $C_{lm,k}$, respectively, and the pressures at the Stokes and 
circuit ends by $P_{lm,k}$ and $\pi_{lm,k}$, respectively. Thus, for each $m\in \mathcal{M}$, we can rewrite the vector of state variables $\vector{y}_m$ as
$\vector{y}_m=[\vector{\pi}_m,\vector{\omega}_m]^T$, where the $j_{\Upsilon_m}$-dimensional column vector 
$\left[\pi_{lm,k} \right]^T$, with $l\in \mathcal{L}_m$ and $k=1,\ldots, j_{\Omega_l,\Upsilon_m}$,
gathers all pressures at the circuit end of the connecting
resistors, whereas the 
$(d_m - j_{\Upsilon_m})$-dimensional column vector $\left[\omega_{lm,k} \right]^T$, with $l\in \mathcal{L}_m$ and $k=1,\ldots, j_{\Omega_l,\Upsilon_m}$,
gathers the remaining state variables. Then, for each $m\in \mathcal{M}$, we can rewrite system \eqref{eq:circuit:m} as
\begin{equation}
 \label{eq:circuit:m:new}
 \frac{d}{dt} \left[
              \begin{array}{c}
              \vector{\pi}_m \\ \vector{\omega}_m
              \end{array}
              \right] 
  = \matrix{A}_m \left[
              \begin{array}{c}
              \vector{\pi}_m \\ \vector{\omega}_m
              \end{array}
              \right] 
   + \vector{s}_m  +  \vector{b}_m .          
\end{equation}
Recalling that $\vector{b}_m$ gathers the contributions due to the Stokes-circuit connections, we can write $\vector{b}_m = [ $\bm{\mathcal{b}}$_m,0]^T$,
where \bm{\mathcal{b}}$_m = \left[ \beta_{lm,k} \right]^T$, with $\beta_{lm,k} = \Frac{Q_{lm,k}}{C_{lm,k}}$
for $l\in \mathcal{L}_m$ and $k=1,\ldots, j_{\Omega_l,\Upsilon_m}$. 
Thus, denoting by $\mathcal G_{RC}$ the functional defined in equation~\eqref{eq:energy_g} in the case of the Stokes-circuit connections depicted in Figure~\ref{fig:zoom_coupling_RC},
and noticing that the entries of $\matrix{U}_m$ corresponding to $\pi_{lm,k}$ are simply given by $C_{lm,k}$, we can write 
\begin{equation}\label{eq:grc}
\mathcal G_{RC} = - \sum_{\substack{m\in\mathcal M_l\\l\in\mathcal L}} \sum_{k=1}^{j_{\Omega_l,\Upsilon_m}} P_{lm,k}Q_{lm,k}
                + \sum_{\substack{m\in\mathcal M_l\\l\in\mathcal L}} \sum_{k=1}^{j_{\Omega_l,\Upsilon_m}} \beta_{lm,k} C_{lm,k} \pi_{lm,k}.
\end{equation}     
Furthermore, since $Q_{lm,k}$ is the flow rate through the resistor of resistance $R_{lm,k}$, we can write $Q_{lm,k} =(P_{lm,k}-\pi_{lm,k})/R_{lm,k}$, thus obtaining
\begin{equation}\label{eq:drc}
 \mathcal G_{RC} = -  \sum_{\substack{m\in\mathcal M_l\\l\in\mathcal L}}\sum_{k=1}^{j_{\Omega_l,\Upsilon_m}} R_{lm,k} (Q_{lm,k})^2 = - \mathcal D_{RC}\le0 \qquad \forall t\ge0\, ,
\end{equation}
showing that $\mathcal G_{RC}$ contributes to the energy dissipation of the fully coupled system, consistently with the physics of a resistive connection. 
As a consequence, in the case of Stokes-circuit connections illustrated in Figure~\ref{fig:zoom_coupling_RC} and considered hereafter, the fully coupled system  satisfies the energy 
identity
\begin{equation}
\label{eq:full_energy_new}
\frac{d}{dt} \Big( \mathcal E_{\Omega} + \mathcal E_{\Upsilon}\Big) + \mathcal D_\Omega + \mathcal D_{RC} + \mathcal U_\Upsilon
= \mathcal F_\Omega + \mathcal F_\Upsilon\, .
\end{equation}

To better understand the mathematical formulation of the problem and the various contributions in the energy identity, particularly those coming from  $\mathcal G_{RC}$, we examine 
three illustrative examples that differ by: \textit{(i)} number of Stokes regions and lumped circuits; \textit{(ii)} number of connections between Stokes regions and lumped circuits; and \textit{(iii)}
type of elements within the lumped circuits. In all these examples, each domain $\Omega_l\in \mathbb R^d$, with $d=2$ and $l\in\mathcal L$, is defined as the rectangle $(0,L) \times
(-H/2,H/2)$, with $H,L>0$ given. 

\noindent {\bf Example 1.} 
In Example 1, see Figure~\ref{fig:ex1}, the \textsc{2d} Stokes region $\Omega_1$ is connected to the lumped circuit $\Upsilon_1$; as a consequence, here we have 
$l=m=j_{\Omega_1}=j_{\Upsilon_1}=j_{\Omega_1,\Upsilon_1}=1$ and 
$\mathcal L = \mathcal L_1 = \mathcal M =  \mathcal M_1 =\{1\}$. 
\begin{figure}[tp]
\begin{center}
\scalebox{0.8}{
\begin{tikzpicture}[scale=0.8]

\coordinate [label=above:\textcolor{black}{$\overline{p}_1$}] (E) at (0.7,-0.4);  

\begin{scope}[xscale=0.65,shift={(-1,0)}]
\filldraw[thick,yellow!50!white,draw=black] (2.5,-1.4) rectangle (10.0,1.4);
\coordinate [label=above:\textcolor{black}{$\Omega _1$}] (E) at (6.5,-0.15); 
\draw [black,ultra thick] (10.0,-1.4) -- (10.0,1.4);
\coordinate [label=above:\textcolor{black}{$\Sigma_{1}$}] (E) at (2.1,0.9);  
\coordinate [label=above:\textcolor{black}{$\Gamma_{1}$}] (E) at (6.5,1.4);  
\coordinate [label=below:\textcolor{black}{$\Gamma_{1}$}] (E) at (6.5,-1.4);  
\coordinate [label=above:\textcolor{black}{$S_{11,1}$}] (E) at (10.85,0.9);  
\coordinate [label=above:\textcolor{black}{$P_{11,1}$}] (E) at (10.85,0.0);  
\end{scope}

\draw[thick, red, ->] (6.15,-1) -- (7.15,-1);
\coordinate [label=below:\textcolor{red}{$Q_{11,1}$}] (E) at (6.65,-1);  

\filldraw[fill=black] (5.86,0) circle(0.05);
\draw[thick] (5.85,0) -- (7.5,0);
\begin{scope}[xscale=0.15,yscale=0.15,shift={(50,0)}, rotate=90]
  \draw[thick,color=black,rotate=-90] (0,0) --(0.5,0.866025404) -- (1.5,-0.866025404) -- (2.5,0.866025404) -- (3.5,-0.866025404) -- (4,0);
\end{scope}
\draw[thick] (8.1,0) -- (9.1,0);
\filldraw[fill=black] (9.1,0) circle(0.05);

\coordinate [label=below:\textcolor{black}{$R_{11,1}$}] (E) at (7.9,-0.07);  
\coordinate [label=above:\textcolor{black}{$\pi_{11,1}$}] (E) at (9.1,0.0);  

\draw[thick] (9.1,0) -- (9.1,-0.9);
\draw[thick] (8.8,-0.9) -- (9.4,-0.9);
\draw[thick] (8.8,-1.1) -- (9.4,-1.1);
\draw[thick] (9.1,-1.1) -- (9.1,-1.7);
\draw[thick] (8.9,-1.7) -- (9.3,-1.7);
\draw[thick] (9.0,-1.8) -- (9.2,-1.8);
\draw[thick] (9.07,-1.9) -- (9.13,-1.9);

\coordinate [label=left:\textcolor{black}{$C_{11,1}$}] (E) at (8.9,-1.0);  

\coordinate [label=center:\textcolor{blue}{$\Upsilon_{1}$}] (E) at (12.3,2.3); 
\draw[dashed,thick,blue] (7.3,2.0) rectangle (17.3,-2);

\draw[thick] (9.1,0) -- (11.1,0);
\begin{scope}[shift={(1.0,0)}]

\begin{scope}[xscale=0.15,yscale=0.15,shift={(67.3,0)}, rotate=90]
  \draw[thick,color=black,rotate=-90] (0,0) --(0.5,0.866025404) -- (1.5,-0.866025404) -- (2.5,0.866025404) -- (3.5,-0.866025404) -- (4,0);
\end{scope}
\draw[thick] (10.7,0) -- (12.7,0);
\draw[thick,->] (10.1,-0.2) -- (10.7,0.2);
\filldraw[fill=black] (12.7,0) circle(0.05);

\coordinate [label=below:\textcolor{black}{$R_{a}(\vector{y}_1,t)$}] (E) at (10.4,-0.07);  
\coordinate [label=above:\textcolor{black}{$\omega_{11}$}] (E) at (12.7,0.0);  

\begin{scope}[shift={(1.0,0)}]
 
\draw[thick] (11.7,0) -- (11.7,-0.9);
\draw[thick] (11.4,-0.9) -- (12.0,-0.9);
\draw[thick] (11.4,-1.1) -- (12.0,-1.1);
\draw[thick] (11.7,-1.1) -- (11.7,-1.7);
\draw[thick] (11.5,-1.7) -- (11.9,-1.7);
\draw[thick] (11.6,-1.8) -- (11.8,-1.8);
\draw[thick] (11.67,-1.9) -- (11.73,-1.9);
\draw[thick,->] (11.4,-1.3) -- (12.0,-0.7);

\coordinate [label=left:\textcolor{black}{$C_{a}(\vector{y}_1,t)$}] (E) at (11.5,-1.0);  

\end{scope}

\begin{scope}[shift={(1.5,0)}]
 
\draw[thick] (11.2,0) -- (12.2,0);
\begin{scope}[xscale=0.15,yscale=0.15,shift={(81.3,0)}, rotate=90]
  \draw[thick,color=black,rotate=-90] (0,0) --(0.5,0.866025404) -- (1.5,-0.866025404) -- (2.5,0.866025404) -- (3.5,-0.866025404) -- (4,0);
\end{scope}
\draw[thick] (12.8,0) -- (13.8,0);
\filldraw[fill=black] (13.8,0) circle(0.05);

\coordinate [label=below:\textcolor{black}{$R_{b}$}] (E) at (12.5,-0.07);  

\coordinate [label=above:\textcolor{black}{$\widetilde p$}] (E) at (13.8,0.0);  

\end{scope}

\begin{scope}[shift={(1.5,0)}]

\draw[thick] (13.8,0) -- (13.8,-0.75);
\draw[thick] (13.8,-1.0) circle (.25cm);
\draw[thick] (13.8,-1.25) -- (13.8,-1.7);
\draw[thick] (13.6,-1.7) -- (14.0,-1.7);
\draw[thick] (13.7,-1.8) -- (13.9,-1.8);
\draw[thick] (13.77,-1.9) -- (13.83,-1.9);
 \path[->,thick,every node/.style={font=\sffamily\small}]
    (14.1,-1.7) edge[bend right] node [right] {} (14.1,0);   
\end{scope}

\end{scope}

\end{tikzpicture}
}
\end{center}
\caption{\textit{Example 1.} The two-dimensional Stokes region $\Omega_1$ is connected to the lumped circuit $\Upsilon_1$ via a resistive element  with resistance $R_{11,1}$.}
\label{fig:ex1}
\end{figure}
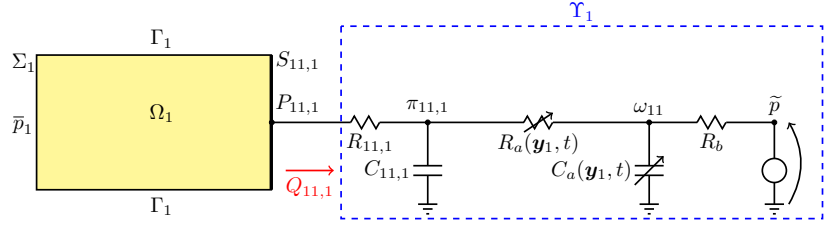
The circuit $\Upsilon_1$ is described by the vector of state variables $\vector{y}_1=[\pi_{11,1},\omega_{11}]^T$ whose dimension is $d_1=2$. 
This circuit includes a voltage generator, resistances and capacitances, some of which nonlinear, as ofter needed in blood flow modeling applications \cite{ursino1997,guidoboni2014}.
Due to the particular structure of the circuit, it is natural to choose $\pi_{11,1}$ as being the nodal pressure and $\omega_{11}$ the nodal volume.
In this example, in the ODE system~\eqref{eq:circuit:m}, we have
\begin{equation}\label{eq:test1_A}
\matrix{A}_1(\vector{y}_1,t) = \left[
\begin{array}{cc}
- \frac{1}{R_{a}(\vector{y}_1,t)C_{11,1}} & \frac{1}{R_{a}(\vector{y}_1,t)C_{11,1}C_{a}(\vector{y}_1,t)} \\[0.15in]
\frac{1}{R_{a}(\vector{y}_1,t)}  & -\frac{1}{R_{a}(\vector{y}_1,t)C_{a}(\vector{y}_1,t)}- \frac{1}{R_{b} C_{a}(\vector{y}_1,t)}
\end{array}\right],
\end{equation}
\begin{align}\label{eq:test1_r} 
\vector{r}_1(\vector{y}_1,Q_{11,1},P_{11,1},t) & = \vector{s}_1(t) + \vector{b}_1(Q_{11,1},t) \\ \nonumber
& = \left[0, \frac{\widetilde p(t)}{R_{b}} \right]^T + \left[\frac{Q_{11,1}(t)}{C_{11,1}}, 0\right]^T, 
\end{align}
with 
\begin{equation}
\label{eq:flow_rate_11}
Q_{11,1}(t)=\frac{P_{11,1}(t)-\pi_{11,1}(t)}{R_{11,1}}.
\end{equation}
Moreover, we have
\begin{equation}\label{eq:test1_U}
\matrix{U}_1(\vector{y}_1,t) = \left[\begin{array}{cc}
C_{11,1} & 0 \\
0 & \frac{1}{C_{a}(\vector{y}_1,t)}
\end{array}\right],
\end{equation}
and we \revB{observe} that, we chose $C_{11,1}$ for the entry in $U_{11}$ as it refers to the variable $\pi_{11,1}$ and, similarly, we chose $1/C_{a}$ 
for the entry $U_{12}$ as it refers to the variable $\omega_{11}$. 

Using the definition of $\matrix{U}_1$ given in~\eqref{eq:test1_U}, 
the functionals $\mathcal E_\Upsilon$ and $\mathcal F_\Upsilon$ can be written as 
\begin{equation}
  \mathcal E_\Upsilon =\frac{1}{2}\left( C_{11,1} \pi_{11,1}^2+ \frac{\omega_{11}^2}{C_{a}} \right),\qquad \qquad 
  \mathcal F_\Upsilon = \frac{\widetilde p}{R_{b}C_{a}}\omega_{11}, 
\end{equation}
respectively. The former functional represents the fluid potential energy stored in the capacitors of the lumped circuit, whereas the latter 
accounts for the forcing on the system due to the generator of voltage. Similarly, using the definition of $\matrix{B}_1$ given in~\eqref{eq:Bm}, 
the functional $\mathcal U_\Upsilon$ can be written as 
\begin{align}
\mathcal U_\Upsilon &= \frac{\pi_{11,1}^2}{R_{a}} - 2\frac{\pi_{11,1}\omega_{11}}{R_{a}C_{a}} 
+  \left( \frac{1}{R_{a}}+\frac{1}{R_{b}}+\frac{1}{2}\frac{dC_{a}}{dt}\right) \frac{\omega_{11}^2}{C_{a}^2}.
\end{align}
 Clearly, if $C_{a}$ is constant, then $\mathcal U_\Upsilon\geq0$, thereby providing energy dissipation for the system. 
Using the definition of $\vector{b}_1$ given in~\eqref{eq:test1_r} and the definition of $Q_{11,1}$ given in~\eqref{eq:flow_rate_11}, we can write
the functional $\mathcal G_{RC}$ as
\begin{equation}
\mathcal G_{RC} = - R_{11,1} \left(Q_{11,1}\right)^2 := - \mathcal D_{RC} \leq 0 ,\mbox{ for all} \quad t\in (0,T),
\end{equation}
which shows that $\mathcal G_{RC}$ contributes via $\mathcal D_{RC}$ to the mechanisms of energy dissipation in the coupled system.\\
\noindent {\bf Example 2.} 
In Example 2, see Figure~\ref{fig:ex2}, the \textsc{2d} Stokes regions $\Omega_1$ and $\Omega_2$ are connected to the lumped circuit $\Upsilon_1$; as a consequence, here we have $l=2$, $m=1$, 
$j_{\Omega_1}=j_{\Omega_2}=j_{\Omega_1,\Upsilon_1}=j_{\Omega_2,\Upsilon_1}=1$, $j_{\Upsilon_1}=2$, and $\mathcal{M} =\mathcal M_l =\{1\}$ for $l=1,2$, $\mathcal L = {\mathcal L}_1 = \{1,2\}$. 
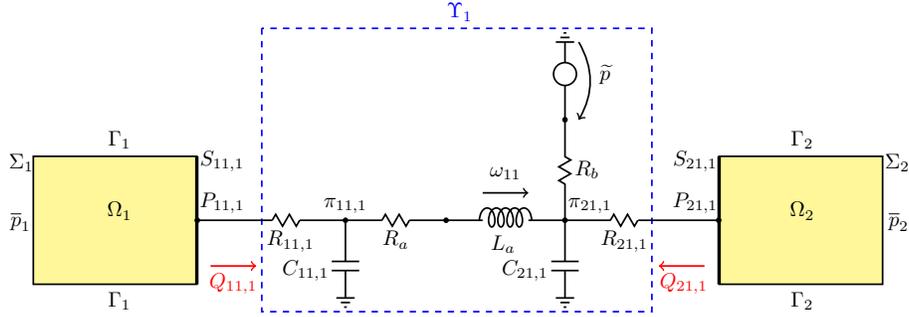
\begin{figure}[htp]
\begin{center}
\scalebox{0.8}{
\begin{tikzpicture}[scale=0.76]

\begin{scope}[shift={(-3.5,0)}]

\coordinate [label=above:\textcolor{black}{$\overline{p}_1$}] (E) at (2.0,-0.4);  

\begin{scope}[xscale=0.65,shift={(-1,0)}]
\filldraw[thick,yellow!50!white,draw=black] (4.5,-1.4) rectangle (10.0,1.4);
\coordinate [label=above:\textcolor{black}{$\Omega _1$}] (E) at (7.4,-0.15); 
\draw [black,ultra thick] (10.0,-1.4) -- (10.0,1.4);
\coordinate [label=above:\textcolor{black}{$\Sigma_{1}$}] (E) at (4.1,0.9);  
\coordinate [label=above:\textcolor{black}{$\Gamma_{1}$}] (E) at (7.4,1.4);  
\coordinate [label=below:\textcolor{black}{$\Gamma_{1}$}] (E) at (7.4,-1.4);  
\coordinate [label=above:\textcolor{black}{$S_{11,1}$}] (E) at (10.85,0.9);  
\coordinate [label=above:\textcolor{black}{$P_{11,1}$}] (E) at (10.85,0.0);  
\end{scope}

\coordinate [label=above:\textcolor{blue}{$\Upsilon_{1}$}] (E) at (11.6,4.2); 
\draw[dashed,thick,blue] (7.27,4.2) rectangle (15.8,-2);

\draw[thick, red, ->] (6.15,-1) -- (7.15,-1);
\coordinate [label=below:\textcolor{red}{$Q_{11,1}$}] (E) at (6.65,-1);  

\filldraw[fill=black] (5.86,0) circle(0.05);
\draw[thick] (5.85,0) -- (7.5,0);
\begin{scope}[xscale=0.15,yscale=0.15,shift={(50,0)}, rotate=90]
  \draw[thick,color=black,rotate=-90] (0,0) --(0.5,0.866025404) -- (1.5,-0.866025404) -- (2.5,0.866025404) -- (3.5,-0.866025404) -- (4,0);
\end{scope}
\draw[thick] (8.1,0) -- (9.1,0);
\filldraw[fill=black] (9.1,0) circle(0.05);

\coordinate [label=below:\textcolor{black}{$R_{11,1}$}] (E) at (7.9,-0.07);  
\coordinate [label=above:\textcolor{black}{$\pi_{11,1}$}] (E) at (9.1,0.0);  

\draw[thick] (9.1,0) -- (9.1,-0.9);
\draw[thick] (8.8,-0.9) -- (9.4,-0.9);
\draw[thick] (8.8,-1.1) -- (9.4,-1.1);
\draw[thick] (9.1,-1.1) -- (9.1,-1.7);
\draw[thick] (8.9,-1.7) -- (9.3,-1.7);
\draw[thick] (9.0,-1.8) -- (9.2,-1.8);
\draw[thick] (9.07,-1.9) -- (9.13,-1.9);

\coordinate [label=left:\textcolor{black}{$C_{11,1}$}] (E) at (8.9,-1.1);  

\draw[thick] (9.1,0) -- (9.9,0);
\begin{scope}[xscale=0.15,yscale=0.15,shift={(66,0)}, rotate=90]
  \draw[thick,color=black,rotate=-90] (0,0) --(0.5,0.866025404) -- (1.5,-0.866025404) -- (2.5,0.866025404) -- (3.5,-0.866025404) -- (4,0);
\end{scope}
\draw[thick] (10.5,0) -- (11.3,0);
\filldraw[fill=black] (11.3,0) circle(0.05);

\coordinate [label=below:\textcolor{black}{$R_{a}$}] (E) at (10.2,-0.07);  

\draw[thick] (11.3,0) -- (12.1,0);
\draw [] (12.05,0) to[cute inductor] (13.15,0) ; 
    \node [below=5pt] at (12.55,0.07) {$L_{a}$};
\draw[thick] (13.1,0) -- (13.9,0);
\draw[thick,->] (12.1,0.6) -- (13.1,0.6);
\node [above=3pt] at (12.6,0.6) {$\omega_{11}$};

\filldraw[fill=black] (13.9,0) circle(0.05);
 \coordinate [label=above:\textcolor{black}{$\pi_{21,1}$}] (E) at (14.45,0.0);  

\begin{scope}[shift={(2.2,0)}]
\draw[thick] (11.7,0) -- (11.7,-0.9);
\draw[thick] (11.4,-0.9) -- (12.0,-0.9);
\draw[thick] (11.4,-1.1) -- (12.0,-1.1);
\draw[thick] (11.7,-1.1) -- (11.7,-1.7);
\draw[thick] (11.5,-1.7) -- (11.9,-1.7);
\draw[thick] (11.6,-1.8) -- (11.8,-1.8);
\draw[thick] (11.67,-1.9) -- (11.73,-1.9);

\coordinate [label=left:\textcolor{black}{$C_{21,1}$}] (E) at (11.5,-1.1);  
\end{scope}

\draw[thick] (13.9,0) -- (14.9,0);
\begin{scope}[xscale=0.15,yscale=0.15,shift={(99.3,0)}, rotate=90]
  \draw[thick,color=black,rotate=-90] (0,0) --(0.5,0.866025404) -- (1.5,-0.866025404) -- (2.5,0.866025404) -- (3.5,-0.866025404) -- (4,0);
\end{scope}
\draw[thick] (15.5,0) -- (17.5,0);
\coordinate [label=below:\textcolor{black}{$R_{21,1}$}] (E) at (15.2,-0.07);  

\draw[thick] (13.9,0) -- (13.9,0.805);
\begin{scope}[xscale=0.15,yscale=0.15,shift={(92.65,9.3)}, rotate=0]
  \draw[thick,color=black,rotate=-90] (0,0) --(0.5,0.866025404) -- (1.5,-0.866025404) -- (2.5,0.866025404) -- (3.5,-0.866025404) -- (4,0);
\end{scope}
\draw[thick] (13.9,1.38) -- (13.9,2.2);
\coordinate [label=right:\textcolor{black}{$R_{b}$}] (E) at (13.95,1.1);  
\filldraw[fill=black] (13.9,2.2) circle(0.05);

\begin{scope}[shift={(13.9,2.2)},rotate=180]
\draw[thick] (0,0) -- (0,-0.75);
\draw[thick] (0,-1.0) circle (.25cm);
\draw[thick] (0,-1.25) -- (0,-1.7);
\draw[thick] (-0.2,-1.7) -- (0.2,-1.7);
\draw[thick] (-0.1,-1.8) -- (0.1,-1.8);
\draw[thick] (-0.03,-1.9) -- (0.03,-1.9);
 \path[->,thick,every node/.style={font=\sffamily\small}]
    (-0.3,-1.7) edge[bend left] node [right] {} (-0.3,0);   
\coordinate [label=right:\textcolor{black}{$\widetilde{p}$}] (E) at (-0.6,-1.0);      
\end{scope}

\end{scope}

\draw[thick, red, <-] (12.45,-1) -- (13.45,-1);
\coordinate [label=below:\textcolor{red}{$Q_{21,1}$}] (E) at (13.,-1);  

\begin{scope}[shift={(-0.40,0)}]
\begin{scope}[xscale=0.65]
\filldraw[thick,yellow!50!white,draw=black] (21.8,-1.4) rectangle (27.3,1.4);
 \coordinate [label=above:\textcolor{black}{$\Omega _2$}] (E) at (24.6,-0.15); 
 \draw [black,ultra thick] (21.8,-1.4) -- (21.8,1.4);
 \coordinate [label=above:\textcolor{black}{$ S_{21,1}$}] (E) at (21.0,0.9);  
 \coordinate [label=above:\textcolor{black}{$\Gamma_{2}$}] (E) at (24.6,1.4);  
  \coordinate [label=below:\textcolor{black}{$\Gamma_{2}$}] (E) at (24.6,-1.4);  
 \coordinate [label=above:\textcolor{black}{$\Sigma_{2}$}] (E) at (27.8,0.9);  
 \coordinate [label=above:\textcolor{black}{$P_{21,1}$}] (E) at (21.0,0.0);  
\end{scope}
  \filldraw[fill=black] (14.16,0) circle(0.05);
\coordinate [label=above:\textcolor{black}{$\overline{p}_2$}] (E) at (18.1,-0.4); 
\end{scope}

\end{tikzpicture}
}
\end{center}
\caption{\textit{Example 2.} The two-dimensional Stokes regions $\Omega_1$ and $\Omega_2$ are connected to the lumped circuit $\Upsilon_1$ via resistive elements with 
resistance $R_{11,1}$ and $R_{21,1}$, respectively. }
\label{fig:ex2}
\end{figure}
The circuit $\Upsilon_1$ is described by the vector of state variables $\vector{y}_1=[\pi_{11,1},\pi_{21,1},\omega_{11}]^T$ whose dimension is $d_1=3$, and it includes also 
an inductive element, in addition to a voltage generator, resistances and capacitances as occurring in systemic modeling of blood flow, see for instance \cite{avanzolini1988}.
Due to the particular structure of the circuit, it is natural to choose $\pi_{11,1}$ and $\pi_{21,1}$ as the nodal pressures and $\omega_{11}$ as the flow rate, respectively.
In this example, in the ODE system~\eqref{eq:circuit:m}, we have
\begin{equation}\label{eq:test2_A}
\matrix{A}_1(\vector{y}_1,t) = \left[
\begin{array}{ccc}
0  & 0 & - \frac{1}{C_{11,1}} \\[0.15in]
0 & -\frac{1}{C_{21,1}R_b} & \frac{1}{C_{21,1}}  \\[0.15in]
\frac{1}{L_{a}} & -\frac{1}{L_{a}} & -\frac{R_{a}}{L_{a}} 
\end{array}\right],
\end{equation}
\begin{align}\label{eq:test2_r}
\vector{r}_1(\vector{y}_1,t) & = \vector{s}_1(t)+ \vector{b}_1(Q_{11,1},Q_{21,1},t) \\ \nonumber
& = \left[0, \frac{\widetilde{p}(t)}{C_{21,1}R_b}, 0\right]^T + \left[\frac{Q_{11,1}(t)}{C_{11,1}},\frac{Q_{21,1}(t)}{C_{21,1}},0\right]^T,
\end{align}
where 
\begin{equation}
Q_{l1,1}(t) =\frac{P_{l1,1}(t)-\pi_{l1,1}(t)}{R_{l1,1}} \qquad l=1,2.
\label{eq:test2_flow_rate}
\end{equation}
Moreover, in this case
\begin{equation}
\matrix{U}_1 (\vector{y}_1,t) = \left[\begin{array}{ccc}
C_{11,1} & 0 & 0\\
0 & C_{21,1} & 0 \\
0 & 0 & L_{a}
\end{array}\right].
\end{equation}
Proceeding as in the case of Example 1, we can write explicitly the functionals $\mathcal E_\Upsilon$, $\mathcal U_\Upsilon$ and $\mathcal F_\Upsilon$ as
\begin{align}
&\mathcal E_\Upsilon = \frac{1}{2}\left( C_{11,1} \pi_{11,1}^2+ L_{a}\omega_{11}^2 + C_{21,1}\pi_{21,1}^2 \right),\qquad \; \\
&\mathcal U_\Upsilon = R_{a} \omega_{11}^2+ \frac{1}{R_{b}} \pi_{21,1}^2, \qquad
\mathcal F_\Upsilon = \frac{\widetilde{p}}{R_{b}}\pi_{21,1},
\end{align}
where we can identify the contributions of both potential and kinetic energy in $\mathcal E_\Upsilon$. In addition, the term $\mathcal U_\Upsilon (t)\geq 0$ for all $t$, 
thereby contributing to the overall energy dissipation. Using the definition of $\vector{b}_1$ given in~\eqref{eq:test2_r}, $Q_{11,1}$
and $Q_{21,1}$ given in~\eqref{eq:test2_flow_rate}, we conclude that
\begin{eqnarray}
\mathcal G_{RC} = - R_{11,1} \left(Q_{11,1}\right)^2 - R_{21,1}\left(Q_{21,1}\right)^2:= - \mathcal D_{RC} \leq 0 ,\mbox{ for all} \quad t\in (0,T).
\end{eqnarray}

\noindent {\bf Example 3.} 
In Example 3, see Figure~\ref{fig:ex3}, the \textsc{2d} Stokes region $\Omega_1$ is connected to the closed lumped circuit $\Upsilon_1$; as a consequence, here we have $l=m=1$, 
$j_{\Omega_1}=j_{\Upsilon_1}= j_{\Omega_1,\Upsilon_1} =2$ and $\mathcal L = \mathcal L_1 = \mathcal M =  \mathcal M_1 =\{1\}$. 
\begin{figure}[tb]
\begin{center}
\scalebox{0.75}{
\begin{tikzpicture}[scale=0.89]

\begin{scope}[xscale=0.65]
\filldraw[thick,yellow!50!white,draw=black] (2.5,-1.4) rectangle (10.0,1.4);
\coordinate [label=above:\textcolor{black}{$\Omega _1$}] (E) at (6.5,-0.15); 
\draw [black,ultra thick] (10.0,-1.4) -- (10.0,1.4);
\draw [black,ultra thick] (2.5,-1.4) -- (2.5,1.4);
\filldraw[fill=black] (10.03,0) circle(0.05);
\filldraw[fill=black] (2.47,0) circle(0.05);
\coordinate [label=above:\textcolor{black}{$S_{11,2}$}] (E) at (1.75,0.9);  
\coordinate [label=above:\textcolor{black}{$\Gamma_{1}$}] (E) at (6.5,1.4);  
\coordinate [label=below:\textcolor{black}{$\Gamma_{1}$}] (E) at (6.5,-1.4);  
\coordinate [label=above:\textcolor{black}{$S_{11,1}$}] (E) at (10.85,0.9);  
\coordinate [label=above:\textcolor{black}{$P_{11,1}$}] (E) at (10.85,0.0);  
\coordinate [label=above:\textcolor{black}{$P_{11,2}$}] (E) at (1.75,0.0);  
\end{scope}

\draw[thick, red, ->] (6.7,-1) -- (7.7,-1);
\coordinate [label=below:\textcolor{red}{$Q_{11,1}$}] (E) at (7.2,-1);  

\draw[thick] (6.5,0) -- (8.257,0);
\begin{scope}[xscale=0.15,yscale=0.15,shift={(55,0)}, rotate=90]
  \draw[thick,color=black,rotate=-90] (0,0) --(0.5,0.866025404) -- (1.5,-0.866025404) -- (2.5,0.866025404) -- (3.5,-0.866025404) -- (4,0);
\end{scope}
\draw[thick] (8.84,0) -- (9.8,0);
\filldraw[fill=black] (9.8,0) circle(0.05);

\coordinate [label=below:\textcolor{black}{$R_{11,1}$}] (E) at (8.5,-0.07);  
\coordinate [label=left:\textcolor{black}{$\pi_{11,1}$}] (E) at (9.9,0.3);  

\draw[thick] (9.8,0) -- (9.8,-0.9);
\draw[thick] (9.5,-0.9) -- (10.1,-0.9);
\draw[thick] (9.5,-1.1) -- (10.1,-1.1);
\draw[thick] (9.8,-1.1) -- (9.8,-1.7);
\draw[thick] (9.5,-1.7) -- (10.1,-1.7);
\draw[thick] (9.6,-1.8) -- (10,-1.8);
\draw[thick] (9.77,-1.9) -- (9.83,-1.9);

\coordinate [label=left:\textcolor{black}{$C_{11,1}$}] (E) at (9.6,-1.0);  

\draw[thick] (9.8,0) -- (9.8,0.805);
\begin{scope}[xscale=0.15,yscale=0.15,shift={(65.3,9.3)}, rotate=0]
  \draw[thick,color=black,rotate=-90] (0,0) --(0.5,0.866025404) -- (1.5,-0.866025404) -- (2.5,0.866025404) -- (3.5,-0.866025404) -- (4,0);
\end{scope}
\draw[thick] (9.8,1.38) -- (9.8,2.2);
\coordinate [label=right:\textcolor{black}{$R_{a}$}] (E) at (9.9,1.1);  
\filldraw[fill=black] (9.8,2.2) circle(0.05);

\begin{scope}[shift={(9.8,2.2)},rotate=180]
\draw[thick] (0,0) -- (0,-0.75);
\draw[thick] (0,-1.0) circle (.25cm);
\draw[thick] (0,-1.25) -- (0,-1.7);
\draw[thick] (-0.2,-1.7) -- (0.2,-1.7);
\draw[thick] (-0.1,-1.8) -- (0.1,-1.8);
\draw[thick] (-0.03,-1.9) -- (0.03,-1.9);
 \path[->,thick,every node/.style={font=\sffamily\small}]
    (-0.3,-1.7) edge[bend left] node [right] {} (-0.3,0);   
\coordinate [label=right:\textcolor{black}{$\widetilde{p}_a$}] (E) at (-0.6,-1.0);      
\end{scope}

\draw[thick] (9.8,0) -- (10.8,0);
\draw[thick] (10.8,0) -- (10.8,-3.5);
\draw[thick] (6.3,-3.5) -- (10.8,-3.5);

\begin{scope}[xscale=-1,xshift=-231]
\draw[thick, red, ->] (6.7,-1) -- (7.7,-1);
\coordinate [label=below:\textcolor{red}{$Q_{11,2}$}] (E) at (7.2,-1);  

\draw[thick] (6.5,0) -- (8.257,0);
\begin{scope}[xscale=0.15,yscale=0.15,shift={(55,0)}, rotate=90]
  \draw[thick,color=black,rotate=-90] (0,0) --(0.5,0.866025404) -- (1.5,-0.866025404) -- (2.5,0.866025404) -- (3.5,-0.866025404) -- (4,0);
\end{scope}
\draw[thick] (8.84,0) -- (9.8,0);
\filldraw[fill=black] (9.8,0) circle(0.05);

\coordinate [label=below:\textcolor{black}{$R_{11,2}$}] (E) at (8.5,-0.07);  
\coordinate [label=left:\textcolor{black}{$\pi_{11,2}$}] (E) at (9.7,0.3);  

\draw[thick] (9.8,0) -- (9.8,-0.9);
\draw[thick] (9.5,-0.9) -- (10.1,-0.9);
\draw[thick] (9.5,-1.1) -- (10.1,-1.1);
\draw[thick] (9.8,-1.1) -- (9.8,-1.7);
\draw[thick] (9.5,-1.7) -- (10.1,-1.7);
\draw[thick] (9.6,-1.8) -- (10,-1.8);
\draw[thick] (9.77,-1.9) -- (9.83,-1.9);

\coordinate [label=right:\textcolor{black}{$C_{11,2}$}] (E) at (9.55,-1.0);  

\draw[thick] (9.8,0) -- (9.8,0.805);
\begin{scope}[xscale=0.15,yscale=0.15,shift={(65.3,9.3)}, rotate=0]
  \draw[thick,color=black,rotate=-90] (0,0) --(0.5,0.866025404) -- (1.5,-0.866025404) -- (2.5,0.866025404) -- (3.5,-0.866025404) -- (4,0);
\end{scope}
\draw[thick] (9.8,1.38) -- (9.8,2.2);
\coordinate [label=right:\textcolor{black}{$R_{b}$}] (E) at (9.7,1.1);  
\filldraw[fill=black] (9.8,2.2) circle(0.05);

\begin{scope}[shift={(9.8,2.2)},rotate=180]
\draw[thick] (0,0) -- (0,-0.75);
\draw[thick] (0,-1.0) circle (.25cm);
\draw[thick] (0,-1.25) -- (0,-1.7);
\draw[thick] (-0.2,-1.7) -- (0.2,-1.7);
\draw[thick] (-0.1,-1.8) -- (0.1,-1.8);
\draw[thick] (-0.03,-1.9) -- (0.03,-1.9);
 \path[->,thick,every node/.style={font=\sffamily\small}]
    (-0.3,-1.7) edge[bend left] node [right] {} (-0.3,0);   
\coordinate [label=left:\textcolor{black}{$\widetilde{p}_b$}] (E) at (-0.6,-1.0);      
\end{scope}

\draw[thick] (9.8,0) -- (10.8,0);
\draw[thick] (10.8,0) -- (10.8,-3.5);
\draw[thick] (6.3,-3.5) -- (10.8,-3.5);

\end{scope}

\begin{scope}[shift={(-7.4,-3.5)}]
\draw[thick] (9.1,0) -- (9.9,0);
\begin{scope}[xscale=0.15,yscale=0.15,shift={(66,0)}, rotate=90]
  \draw[thick,color=black,rotate=-90] (0,0) --(0.5,0.866025404) -- (1.5,-0.866025404) -- (2.5,0.866025404) -- (3.5,-0.866025404) -- (4,0);
\end{scope}
\draw[thick] (10.5,0) -- (11.3,0);
\filldraw[fill=black] (11.3,0) circle(0.05);

\coordinate [label=below:\textcolor{black}{$R_{c}$}] (E) at (10.2,-0.07);  

\draw[thick] (11.3,0) -- (12.1,0);
\draw [] (12.05,0) to[cute inductor] (13.15,0) ; 
    \node [below=5pt] at (12.55,0.07) {$L_{c}$};
\draw[thick] (13.1,0) -- (13.9,0);
\draw[thick,<-] (12.1,0.6) -- (13.1,0.6);
\node [above=3pt] at (12.6,0.6) {$\omega_{11}$};

\end{scope}

\coordinate [label=above:\textcolor{blue}{$\Upsilon_{1}$}] (E) at (10,4.5); 
\draw[dashed,thick,blue] (4,-2.2) -- (8,-2.2);
\draw[dashed,thick,blue] (8,-2.2) -- (8,4.5);
\draw[dashed,thick,blue] (8,4.5) -- (12,4.5);
\draw[dashed,thick,blue] (12,4.5) -- (12,-4.5);
\draw[dashed,thick,blue] (4,-4.5) -- (12,-4.5);
\begin{scope}[xscale=-1,xshift=-230]
\draw[dashed,thick,blue] (3.5,-2.2) -- (8,-2.2);
\draw[dashed,thick,blue] (8,-2.2) -- (8,4.5);
\draw[dashed,thick,blue] (8,4.5) -- (12,4.5);
\draw[dashed,thick,blue] (12,4.5) -- (12,-4.5);
\draw[dashed,thick,blue] (3.5,-4.5) -- (12,-4.5);
\end{scope}

\end{tikzpicture}
}
\end{center}
\caption{\textit{Example 3.} The two-dimensional Stokes region $\Omega_1$ is connected to the lumped circuit $\Upsilon_1$ via two resistive elements with 
resistance $R_{11,1}$ and $R_{11,2}$, respectively.}
\label{fig:ex3}
\end{figure}
The circuit $\Upsilon_1$ is described by the vector of state variables $\vector{y}_1=[\pi_{11,1},\pi_{11,2},\omega_{11}]^T$ whose dimension is $d_1=3$. 
In addition to including resistive, capacitive and inductive elements, the main feature here is that the circuit $\Upsilon_1$ is closed, as the 
circulatory system. Due to the particular structure of the circuit, it is natural to choose $\pi_{11,1}$ and $\pi_{11,2}$ as being the nodal pressures 
and $\omega_{11}$ as being the flow rate, respectively.
In this example, in the ODE system~\eqref{eq:circuit:m}, we have
\begin{equation}\label{eq:test3_A}
\matrix{A}_1(\vector{y}_1,t) = \left[
\begin{array}{ccc}
- \frac{1}{R_{a}C_{11,1}} & 0 & -\frac{1}{C_{11,1}} \\[0.15in]
0 & - \frac{1}{R_{b}C_{11,2}} & \frac{1}{C_{11,2}} \\[0.15in]
\frac{1}{L_{c}}  & -\frac{1}{L_{c}} & - \frac{R_{c}}{L_{c}}
\end{array}\right],
\end{equation}
\begin{align}\label{eq:test3_r}
& \vector{r}_1(\vector{y}_1,Q_{11,1},P_{11,1},Q_{11,2},P_{11,2},t) = \vector{s}_1(t) + \vector{b}_1(Q_{11,1},Q_{11,2},t) \\ \nonumber
& = \left[ \frac{\widetilde p_a(t)}{R_{a}C_{11,1}},\frac{\widetilde p_b(t)}{R_{b}C_{11,2}},0 \right]^T +
    \left[\frac{Q_{11,1}(t)}{C_{11,1}},\frac{Q_{11,2}(t)}{C_{11,2}},0 \right]^T,
\end{align}
where
\begin{equation}
Q_{11,k}(t)=\frac{P_{11,k}(t)-\pi_{11,k}(t)}{R_{11,k}} \qquad k=1,2.
\label{eq:test3_flow_rate}
\end{equation}
Moreover, the tensor $\matrix{U}_1$ is given by
\begin{equation}
\matrix{U}_1 (\vector{y}_1,t) = \left[\begin{array}{ccc}
C_{11,1} & 0 & 0\\
0 & C_{11,2} & 0 \\
0 & 0 & L_{c}
\end{array}\right].
\end{equation}

The functionals $\mathcal E_\Upsilon$, $\mathcal U_\Upsilon$ and $\mathcal F_\Upsilon$ are given by
\begin{align}
& \mathcal E_\Upsilon = \frac{1}{2}\left(C_{11,1} \pi_{11,1}^2+ C_{11,2}\pi_{11,2}^2 + L_{c}\omega_{11}^2\right), \qquad \; \\ 
& \mathcal U_\Upsilon = \frac{1}{R_{a}} \pi_{11,1}^2+ \frac{1}{R_{b}} \pi_{11,2}^2 + R_{c}\omega_{11}^2, \qquad
\mathcal F_\Upsilon = \frac{\widetilde{p}_a}{R_{a}}\pi_{11,1} + \frac{\widetilde{p}_b}{R_{b}}\pi_{11,2}.
\end{align}
We can identify the contributions of both potential and kinetic energy in $\mathcal E_\Upsilon$. Moreover, it follows that $\mathcal U_\Upsilon (t)\geq 0$ for all $t$, 
thereby contributing to the overall energy dissipation. Using the definition of $\vector{b}_1$ given in~\eqref{eq:test3_r}, $Q_{11,1}$
and $Q_{11,2}$ given in~\eqref{eq:test3_flow_rate}, we conclude that
\begin{eqnarray}
\mathcal G_{RC} = - R_{11,1} (Q_{11,1})^2 - R_{11,2} (Q_{11,2})^2:= - \mathcal D_{RC} \leq 0 ,\mbox{ for all} \ t\in (0,T).
\end{eqnarray}

\section{Energy-based operator splitting approach}
\label{sec:splitting}

The above examples showed that resistive Stokes-circuit connections contribute to the energy dissipation of the fully coupled system, namely $\mathcal G_{RC}=-\mathcal D_{RC}$, with $\mathcal D_{RC}(t) \geq 0 $ for all $t$.
Thus, the energy identity \eqref{eq:full_energy_new} holds.

In particular, if all forcing terms are zero, namely $\mathcal F_\Omega(t) = \mathcal F_\Upsilon(t) =0$ for $t\geq0$, and the circuit properties are such that $\mathcal U_\Upsilon(t)\geq 0$ for $t\geq0$, then from \eqref{eq:full_energy_new} we obtain that $\mathcal E = \mathcal E_\Omega+\mathcal E_\Upsilon$ is a decreasing function of time, namely
\begin{equation}\label{eq:norm_bound_coupled}
\mathcal E(t) \leq \mathcal E(0) \quad \forall t\geq0.
\end{equation}
This essential mathematical and physical property must be preserved at the discrete level, and this provides the main rationale for our splitting scheme. 
We begin by adopting an operator splitting technique, see {\it e.g.} \cite[Chap. II]{glowinski2003}, to perform  a semi-discretization in time to 
solve sequentially in separate substeps the PDE systems associated with the Stokes regions and the ODE systems associated with the lumped hydraulic circuits. 
The most important feature of our scheme is that the substeps are designed so that the energy at the semi-discrete level mirrors the behavior of the energy of the fully coupled system, 
thereby providing unconditional stability to the proposed splitting method via an upper bound in the norms of the solution similar to that provided by~\eqref{eq:norm_bound_coupled}. 
The version of the method detailed below yields, at most, a first-order accuracy in time, since it includes only two substeps; however, the scheme can be generalized to attain 
second-order accuracy using symmetrization techniques \cite[Chap. VI]{glowinski2003}.\\

\noindent{\bf First-order splitting algorithm.}
For the sake of simplicity, let $\Delta t$ denote a fixed time step, let $t^n = n \Delta t$ and let \revB{$\varphi^n = \varphi(t^n)$} for any general expression $\varphi$. 
Let $\vector{v}_l^0=\vector{v}_{l,0}$ for all $l\in \mathcal L$ and $\vector{y}_m^0=\vector{y}_{m,0}$ for all $m\in \mathcal M$. 
Then, for any $n\geq0$ solve
\begin{description}
\item[Step 1] For each $l\in \mathcal L$, $m\in\mathcal M$ and $k=1,\ldots,j_{\Omega_l,\Upsilon_m}$, given $\vector{v}_l^{n}$ and $\vector{y}_m^{n}$,
find $\vector{v}_l$ and $\vector{y}_m$ such that
\begin{align}
\Div \vector{v}_l& =0 & \mbox{in }\Omega_l \times (t^n,t^{n+1}),  \label{eq:step1_div}\\
\rho \deriv{}{\vector{v}_l}{t} &= -\nabla p_l + \mu \Delta \vector{v}_l + \rho \vector{f}_l  & \mbox{in }\Omega_l \times (t^n,t^{n+1}), \label{eq:step1_lm}\\
\frac{d\vector{y}_m}{dt} &=  \vector{b}_m (Q_{lm,k},P_{lm,k},t) & \mbox{in } (t^n,t^{n+1}),  \label{eq:step1_circ}
\end{align}
with the initial conditions
\begin{align}
\vector{v}_l(\vector{x},t^n)&=\vector{v}_l^{n}(\vector{x}) & \mbox{in }\Omega_l,\\
\vector{y}_m(t^n)&=\vector{y}_m^{n},
\end{align}
and the boundary conditions
\begin{align}
&\vector{v}_l=\vector{0} & \mbox{on } \Gamma_{l}\times (t^n,t^{n+1}),   \label{eq:step1_bc_wall}\\
&\Big(-p_l\matrix{I}+\mu\nabla \vector{v}_l\Big)\vector{n}_l= - \overline{p}_l\vector{n}_l & \mbox{on } \Sigma_{l}\times (t^n,t^{n+1}),  \label{eq:step1_bc_in}\\
&\Big(-p_l\matrix{I}+\mu\nabla \vector{v}_l\Big)\vector{n}_{lm,k}= - P_{lm,k}\vector{n}_{lm,k} & \mbox{on } S_{lm,k}\times (t^n,t^{n+1}),   \label{eq:step1_ic_p}
\end{align}
with
\begin{equation}\label{eq:step1_q}
\displaystyle\int_{S_{lm,k}}\vector{v}_l(\vector{x},t) \cdot \vector{n}_{lm,k}(\vector{x},t)\, dS_{lm,k} = Q_{lm,k}(t) \quad \mbox{in } (t^n,t^{n+1}),
\end{equation}
and then set
\begin{equation}\label{eq:end_Step1}
\vector{v}_l^{n + \frac{1}{2}} = \vector{v}_l(\vector{x},t^{n+1}), \; p_l^{n+1}=p_l(\vector{x},t^{n+1}) \; \mbox{and}
\; \vector{y}_m^{n + \frac{1}{2}} = \vector{y}_m(t^{n+1}).
\end{equation}
\item[Step 2] For each $l\in \mathcal L$, $m\in\mathcal M$ and $k=1,\ldots,j_{\Omega_l,\Upsilon_m}$, given $\vector{v}_l^{n+\frac{1}{2}}$ and $\vector{y}_m^{n+\frac{1}{2}}$,
find $\vector{v}_l$ and $\vector{y}_m$ such that
\begin{align}
\rho \deriv{}{\vector{v}_l}{t} &= \vector{0} & \mbox{in }\Omega_l \times  (t^n,t^{n+1}),  \label{eq:step2_vel}\\
\frac{d\vector{y}_m}{dt} &= \matrix{A}_m(\vector{y}_m,t) \,\vector{y}_m + \vector{s}_m(\vector{y}_m,t)  & \mbox{in }(t^n,t^{n+1}),
\end{align}
with the initial conditions
\begin{align}
\vector{v}_l(\vector{x},t^n)&=\vector{v}_l^{n + \frac{1}{2}}(\vector{x}) &  \mbox{in }\Omega_l, \\
\vector{y}_m(t^n)&=\vector{y}_m^{n + \frac{1}{2}}, &
\end{align}
and set
\begin{equation}\label{eq:split_end}
\vector{v}_l^{n + 1} = \vector{v}_l(\vector{x},t^{n+1}) \quad \mbox{and} 
\quad \vector{y}_m^{n + 1} = \vector{y}_m(t^{n+1}).
\end{equation}
\end{description}
\begin{remark}
In the particular case of resistive connections described in Section~\ref{sec:energy_identity} we have that~\eqref{eq:step1_circ} reduces to 
$\Deriv{}{\pi_{lm,k}}{t} = \Frac{Q_{lm,k}}{C_{lm,k}} $ and $\Deriv{}{\vector{\omega}_{m}}{t} =\vector{0}$,
 for $ l\in \mathcal L$, $m\in\mathcal{M}_l$, $k=1,\ldots,j_{\Omega_l,\Upsilon_m}$.
\end{remark}
Some of the main features of the proposed algorithm are the following: 
\begin{enumerate}
\item The solution at time $t^{n+1}$ is obtained from the solution at time $t^n$ after solving sequentially Step 1 and Step 2, without the need of subiterations between the two steps. The scheme is stable even without subiterations because each substep satisfies an energy identity similar to that of the fully coupled problem, as discussed later in this Section.
\item Pressure and flow rate at the coupling interfaces, namely $P_{lm,k}$ and $Q_{lm,k}$, are solved for simultaneously, thereby implicitly, even though the algorithm is partitioned. This is not the case in other splitting schemes, see for example Quarteroni \textit{et al}~\cite{quarteroni2001}, where pressures computed from the lumped circuits are used as inputs for the Stokes problem and flow rates computed in the Stokes problem are used as inputs for the lumped circuits.
\item Steps 1 and 2 communicate via the initial conditions. In particular, the state variables $\vector{y}_m$, with $m\in\mathcal M$, are updated in Step 1 and their value at $t^{n+1}$ provides the initial conditions for Step 2. 
\item Steps 1 and 2 are defined on the discrete time interval $(t^n,t^{n+1})$, but the differential operators have yet to be fully discretized in time and space. Even though specific choices will have to be made for the time and space discretization of Steps 1 and 2, the overall splitting scheme described above is independent of these choices, keeping in mind that first-order convergence in time can be achieved only if all the substeps are solved with numerical methods that are at least first-order in time.
\item \revA{By treating a subset of the ODE systems jointly with the Stokes problems in Step 1, see Equations~\eqref{eq:step1_ic_p} and \eqref{eq:step1_q}, the interface conditions can be dealt with implicitly and ensure proper energy balance at the multiscale interfaces. On the other hand, by treating the main part of the ODE systems in Step 2, the splitting algorithm provides the flexibility to: \textit{(i)} modify only one step in the scheme should the modeling application require the integration of new ODE models; and \textit{(ii)} adopt a numerical scheme that best addresses potential nonlinearities affecting the ODE system.}
\item \revA{Equation~\eqref{eq:step2_vel} effectively means that the velocity vector fields $\vector{v}_l$, for $l\in \mathcal L$, are not updated in Step 2, thereby implying that $\vector{v}_l^{n + 1} = \vector{v}_l^{n + 1/2}$.
}
\end{enumerate}

\noindent{\bf Stability analysis for the first-order splitting algorithm.}
The stability analysis will be performed on a simplified problem that preserves the main difficulties associated with the PDE/ODE coupling  considered in this work.
\begin{theorem}\label{th:uncond_stab}
Consider the Stokes-circuit coupled problem described in Section~\ref{sec:math_model}, in the case of resistive connections considered in Section~\ref{sec:energy_identity}. 
Under the assumptions that
\begin{itemize}
\item[(i)] the circuit properties are such that the tensor $\matrix{A}_m$ is constant and $\matrix{B}_m$ is positive definite;
\item[(ii)] the are no forcing terms, namely $\mathcal F_\Omega(t) = \mathcal F_\Upsilon(t) =0$ for all $t$;
\end{itemize}
the first-order splitting algorithm~\eqref{eq:step1_div}-\eqref{eq:split_end} is unconditionally stable.
\end{theorem}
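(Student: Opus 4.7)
The plan is to establish unconditional stability by showing that the splitting algorithm inherits, at each substep, a discrete counterpart of the continuous energy identity~\eqref{eq:full_energy_new}. I define the semi-discrete energy $\mathcal E^n = \mathcal E_\Omega^n + \mathcal E_\Upsilon^n$ by the formulas in~\eqref{eq:E_OU} evaluated at $\vector{v}_l^n$ and $\vector{y}_m^n$, and the intermediate energy $\mathcal E^{n+1/2}$ analogously from $\vector{v}_l^{n+1/2}$ and $\vector{y}_m^{n+1/2}$. The goal is then the two-step decay $\mathcal E^{n+1} \leq \mathcal E^{n+1/2} \leq \mathcal E^n$, valid for any $\Delta t>0$, from which unconditional stability follows by telescoping.

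For Step~1, I would replicate the derivation of Section~\ref{sec:energy_identity} but restricted to the subsystem~\eqref{eq:step1_div}--\eqref{eq:step1_q}. Testing~\eqref{eq:step1_lm} against $\vector{v}_l$ in $\mathbf{L^2}(\Omega_l)$, exploiting~\eqref{eq:step1_div} and the boundary/interface conditions~\eqref{eq:step1_bc_wall}--\eqref{eq:step1_ic_p}, and summing over $l\in\mathcal L$ yields, thanks to assumption~(ii),
$$\frac{d}{dt}\mathcal E_\Omega + \mathcal D_\Omega = - \sum_{\substack{m\in\mathcal M_l\\l\in\mathcal L}}\sum_{k=1}^{j_{\Omega_l,\Upsilon_m}} P_{lm,k}\, Q_{lm,k}.$$
In parallel, the ODE subsystem~\eqref{eq:step1_circ} reduces in the resistive case to $d\pi_{lm,k}/dt = Q_{lm,k}/C_{lm,k}$ and $d\vector{\omega}_m/dt = \vector{0}$; taking the inner product with the corresponding entries of $\matrix{U}_m$ gives $\frac{d}{dt}\mathcal E_\Upsilon = \sum \pi_{lm,k} Q_{lm,k}$. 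Adding the two identities and using $Q_{lm,k} = (P_{lm,k}-\pi_{lm,k})/R_{lm,k}$, the interface contributions telescope exactly into $-\mathcal D_{RC}$, producing
$$\frac{d}{dt}(\mathcal E_\Omega + \mathcal E_\Upsilon) + \mathcal D_\Omega + \mathcal D_{RC} = 0 \quad \text{on } (t^n,t^{n+1}).$$
Integrating in time and using $\mathcal D_\Omega,\mathcal D_{RC}\geq 0$ then yields $\mathcal E^{n+1/2}\leq\mathcal E^n$.

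For Step~2, equation~\eqref{eq:step2_vel} immediately gives $\vector{v}_l^{n+1}=\vector{v}_l^{n+1/2}$, so $\mathcal E_\Omega$ is exactly preserved. Since $\matrix{A}_m$ and consequently $\matrix{U}_m$ are constant by assumption~(i), taking the inner product of the ODE with $\matrix{U}_m \vector{y}_m$, invoking $\vector{s}_m\equiv 0$ and using $\matrix{B}_m = -\matrix{U}_m\matrix{A}_m$ from~\eqref{eq:Bm} produces
$$\frac{1}{2}\frac{d}{dt}\|\matrix{U}_m^{1/2}\vector{y}_m\|^2 + \vector{y}_m^T\matrix{B}_m\vector{y}_m = 0.$$
Summing over $m$ and exploiting the positive-definiteness of each $\matrix{B}_m$ give $\frac{d}{dt}\mathcal E_\Upsilon\leq 0$ on $(t^n,t^{n+1})$, hence $\mathcal E^{n+1}\leq\mathcal E^{n+1/2}$, closing the chain $\mathcal E^{n+1}\leq\mathcal E^n$ without any restriction on $\Delta t$.

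The delicate point, and the one I would handle with the most care, is the Step~1 balance: verifying that the simultaneous implicit treatment of $P_{lm,k}$ and $Q_{lm,k}$ in~\eqref{eq:step1_ic_p}--\eqref{eq:step1_q}, combined with the inclusion of the coupling-only ODE $d\pi_{lm,k}/dt = Q_{lm,k}/C_{lm,k}$ in the \emph{same} substep, is precisely what makes the interface flux telescope into the dissipative quantity $-\mathcal D_{RC}$. Freezing $\pi_{lm,k}$ at $t^n$ in the Stokes subproblem, or relegating the $\pi_{lm,k}$ equation to Step~2, would leave a residual coupling term of indefinite sign and necessitate a CFL-type restriction on $\Delta t$; this design choice is exactly the mechanism that delivers stability without subiterations.
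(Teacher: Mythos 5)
Your argument is correct, but it takes a genuinely different route from the paper. You prove the decay chain $\mathcal E^{n+1}\leq\mathcal E^{n+1/2}\leq\mathcal E^{n}$ by deriving \emph{continuous-in-time} energy identities for each substep on $(t^n,t^{n+1})$ (the Step~1 identity $\frac{d}{dt}(\mathcal E_\Omega+\mathcal E_\Upsilon)+\mathcal D_\Omega+\mathcal D_{RC}=0$, obtained exactly as you say by letting the interface terms telescope through the resistor law, and the Step~2 identity with $\vector{y}_m^T\matrix{B}_m\vector{y}_m\geq0$) and then integrating over the subinterval; this shows stability of the splitting with substeps solved exactly, independently of how they are later discretized. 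The paper instead works directly at the \emph{fully time-discrete} level: it applies an implicit Euler scheme within each substep, writes the resulting discrete energy identities (with cross terms such as $\rho\int_{\Omega_l}\vector{v}_l^{n}\cdot\vector{v}_l^{n+1/2}\,d\Omega_l$ and $(\vector{y}_m^{n})^T\matrix{U}_m\vector{y}_m^{n+1/2}$ on the right-hand side), and controls those cross terms with Cauchy--Schwarz and Young's inequalities to get $\mathcal E_I^{n+1/2}\leq\mathcal E_I^{n}$ and $\mathcal E_{II}^{n+1}\leq\mathcal E_{II}^{n+1/2}$, glued together exactly as you do via the initial-condition transfer $\mathcal E_{II}^{n+1/2}=\mathcal E_I^{n+1/2}$. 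Your version is more elementary and isolates the structural mechanism (simultaneous implicit treatment of $P_{lm,k}$, $Q_{lm,k}$ and $\pi_{lm,k}$ in Step~1 turning the interface flux into $-\mathcal D_{RC}$; energy handoff through initial conditions), which you correctly identify as the crux; what the paper's computation buys in addition is the stability of the scheme actually implemented, i.e.\ with implicit Euler inside each substep, which does not follow automatically from the exact-substep estimate. Two small points to tighten: constancy of $\matrix{A}_m$ does not by itself force $\matrix{U}_m$ to be constant (you should state that the circuit capacitances/inductances entering $\matrix{U}_m$ are constant, as the paper implicitly does, so that $\matrix{B}_m=-\matrix{U}_m\matrix{A}_m$), and assumption (ii) gives $\mathcal F_\Upsilon=\sum_m\vector{s}_m^T\matrix{U}_m\vector{y}_m=0$, which you interpret as $\vector{s}_m\equiv0$ — a harmless but worth-stating reading.
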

\begin{proof}
Let $\Delta t=t^{n+1}-t^{n}$ and let us begin by considering Step 1. Under assumptions \textit{(i)-(ii)}, using an implicit Euler scheme for the time-discretization and following a similar procedure to that detailed in Section~\ref{sec:energy_identity}, we can obtain the following energy identity at the time-discrete level for Step 1
\begin{equation}
\label{eq:energy_Step1_timedisc}
\begin{split}
\Frac{1}{\Delta t} \mathcal E^{n+\frac{1}{2}}_{I}&+ \mathcal D_{\Omega,I}^{n+\frac{1}{2}} +\mathcal D_{RC,I}^{n+\frac{1}{2}} = \\
&\Frac{1}{\Delta t}\left( \sum_{l\in \mathcal L}   \rho \int_{\Omega_l} \vector{v}_l^{n} \cdot \vector{v}_l^{n + \frac{1}{2}}  \,d\Omega_l  +  \sum_{m\in\mathcal M} \left(\vector{y}^{n}_m\right)^T \matrix{U}_m \vector{y}^{n + \frac{1}{2}}_m \right),
\end{split}
\end{equation}
where $ \mathcal E^{n+\frac{1}{2}}_{I} =  \mathcal E^{n+\frac{1}{2}}_{\Omega,I} +  \mathcal E^{n+\frac{1}{2}}_{\Upsilon,I}$ and
\begin{align}
 \mathcal E^{n+\frac{1}{2}}_{\Omega,I} &= \sum_{l\in \mathcal L}\rho\| {\vector{v}_l^{n + \frac{1}{2}}}\|^2_{L^2(\Omega_l)}, 
&  \mathcal E^{n+\frac{1}{2}}_{\Upsilon,I}&= \sum_{m\in\mathcal M} \|\matrix{U}_m^{1/2} \vector{y}^{n + \frac{1}{2}}_m \|^2, \\
\mathcal D_{\Omega,I}^{n+\frac{1}{2}} &= \sum_{l\in \mathcal L} \mu \| \nabla {\vector{v}_l^{n + \frac{1}{2}}}\|^2_{L^2(\Omega_l)},
& \mathcal D_{RC,I}^{n+\frac{1}{2}} &= \sum_{\substack{m\in\mathcal M_l\\l\in\mathcal L}}\sum_{k=1}^{j_{\Omega_l,\Upsilon_m}} R_{lm,k} \left(Q_{lm,k}^{n+\frac{1}{2}}\right)^2.
\end{align}
Using Young's inequality on the first term on the right hand side of~\eqref{eq:energy_Step1_timedisc}, yields
\begin{equation}\label{eq:estimate_energy2D_step1}
\begin{aligned}
 \sum_{l\in \mathcal L}  \rho \int_{\Omega_l} \vector{v}_l^{n} \cdot \vector{v}_l^{n + \frac{1}{2}}  \,d\Omega_l &\leq  \sum_{l\in \mathcal L}  \Frac{\rho}{2} \left( \| {\vector{v}^{n} _l}\|^2_{L^2(\Omega_l)}  + \| {\vector{v}^{n+\frac{1}{2}} _l}\|^2_{L^2(\Omega_l)}\right)\\
 &=\Frac{1}{2} \mathcal E^{n}_{\Omega,I} + \Frac{1}{2} \mathcal E^{n+\frac{1}{2}}_{\Omega,I}.
 \end{aligned}
\end{equation}
To estimate the second term on the right hand side of~\eqref{eq:energy_Step1_timedisc}, we first use Cauchy-Schwarz inequality and then  Young's inequality to obtain
\begin{equation}\label{eq:estimate_energy0D_step1}
\begin{aligned}
\sum_{m\in\mathcal M} \left(\vector{y}^{n}_m\right)^T \matrix{U}_m \vector{y}^{n + \frac{1}{2}}_m  &\leq \sum_{m\in\mathcal M} \left( \left(\vector{y}^{n}_m\right)^T \matrix{U}_m \vector{y}^{n}_m \right)  \left( \left(\vector{y}^{n+\frac{1}{2}}_m\right)^T \matrix{U}_m \vector{y}^{n+\frac{1}{2}}_m \right)\\
& \leq \Frac{1}{2}\sum_{m\in\mathcal M} \left ( \left(\vector{y}^{n}_m\right)^T \matrix{U}_m \vector{y}^{n}_m + \left(\vector{y}^{n+\frac{1}{2}}_m\right)^T \matrix{U}_m \vector{y}^{n + \frac{1}{2}}_m \right)\\
 &=\Frac{1}{2} \mathcal E^{n}_{\Upsilon,I} + \Frac{1}{2} \mathcal E^{n+\frac{1}{2}}_{\Upsilon,I}.
\end{aligned}
\end{equation}
Note that, Eq.\eqref{eq:estimate_energy0D_step1} holds since $\matrix{U}_m$ is symmetric and positive definite. 
Combining~\eqref{eq:energy_Step1_timedisc},~\eqref{eq:estimate_energy2D_step1} and~\eqref{eq:estimate_energy0D_step1}, we obtain
\begin{equation}
\frac{1}{2 \Delta t}\mathcal E_I^{n+\frac{1}{2}} + \mathcal D_{\Omega,I}^{n+\frac{1}{2}} + \mathcal D_{RC,I}^{n+\frac{1}{2}}\leq  \frac{1}{2 \Delta t}\mathcal E_I^{n},
\end{equation}
for which it follows that 
$
\mathcal E_I^{n+\frac{1}{2}} - \mathcal E_I^{n} \leq -2 \Delta t \left(\mathcal D_{\Omega,I}^{n+\frac{1}{2}} + \mathcal D_{RC,I}^{n+\frac{1}{2}} \right) \leq 0,
$
and, finally, we can conclude that
\begin{equation}\label{eq:disc_eneregy_indeq_step1}
\mathcal E_I^{n+\frac{1}{2}} \leq \mathcal E_I^{n}.
\end{equation}
Let us now consider the energy identity for Step 2.
Using an implicit Euler scheme for the time-discretization, we can obtain the following energy identity at the time-discrete level for Step 2
\begin{equation}
\label{eq:energy_Step2_timedisc}
\begin{split}
\Frac{1}{\Delta t} \mathcal E^{n+1}_{II}&+ \mathcal U_{\Upsilon,II}^{n+1}= \\
& \Frac{1}{\Delta t} \left( \sum_{l\in \mathcal L} \rho\int_{\Omega_l} \vector{v}_l^{n+\frac{1}{2}} \cdot \vector{v}_l^{n + 1}  \,d\Omega_l  +  \sum_{m\in\mathcal M} \left(\vector{y}^{n+\frac{1}{2}}_m\right)^T \matrix{U}_m \vector{y}^{n + 1}_m \right),
\end{split}
\end{equation}
where $ \mathcal E^{n+1}_{II} =  \mathcal E^{n+1}_{\Omega,II} +  \mathcal E^{n+1}_{\Upsilon,II}$ and
\begin{align}
\mathcal E^{n+1}_{\Omega,II} &= \sum_{l\in \mathcal L}\rho\| {\vector{v}_l^{n + 1}}\|^2_{L^2(\Omega_l)}, 
& \mathcal E^{n+1}_{\Upsilon,II}&= \sum_{m\in\mathcal M} \|\matrix{U}_m^{1/2} \vector{y}^{n +1}_m \|^2, \\
\mathcal U_{\Upsilon,II}^{n+1}&=\sum_{m\in\mathcal M}\left( \vector{y}^{n+1}_m\right)^T \matrix{B}_m \vector{y}^{n+1}_m.
\end{align}
Now, to estimate the right hand side of~\eqref{eq:energy_Step2_timedisc}, we use the same procedure utilized for Step 1 in~\eqref{eq:estimate_energy2D_step1}-\eqref{eq:estimate_energy0D_step1}, and we obtain the following inequality 
\begin{equation}\label{eq:energy_estimate_Step2}
\begin{aligned}
\Frac{1}{2\Delta t} \mathcal E^{n+1}_{II} + \mathcal U_{\Upsilon,II}^{n+1} &\leq  \Frac{1}{2\Delta t} \left(\sum_{l\in \mathcal L} \rho \| \vector{v}_l^{n + \frac{1}{2}}\|^2_{L^2(\Omega_l)} +  \sum_{m\in\mathcal M} \left(\vector{y}^{n +  \frac{1}{2}}_m\right)^T \matrix{U}_m \vector{y}^{n +  \frac{1}{2}}_m \right)\\
& = \Frac{1}{2\Delta t}  \mathcal E^{n+\frac{1}{2}}_{II}.
\end{aligned}
\end{equation}
We remark that assumption \textit{(i)} guarantees that $\mathcal U_{\Upsilon,II}^{n+1}\geq0$.
Thus~\eqref{eq:energy_estimate_Step2} implies 
$
\mathcal E_{II}^{n+1} -  \mathcal E_{II}^{n+\frac{1}{2}} \leq - 2 \Delta t \ \mathcal U_{\Upsilon,II}^{n+1} \leq0,
$
which leads to
\begin{equation}\label{eq:disc_eneregy_indeq_step2}
\mathcal E_{II}^{n+1} \leq \mathcal E_{II}^{n+\frac{1}{2}}.
\end{equation}
Thanks to the fact that the initial conditions for Step 2 coincide with the final solution of Step 1 as stated in~\eqref{eq:end_Step1}, it follows that $\mathcal E_{II}^{n+\frac{1}{2}} = \mathcal E_I^{n+\frac{1}{2}}$. Thus, combining \eqref{eq:disc_eneregy_indeq_step1} and \eqref{eq:disc_eneregy_indeq_step2}, we obtain the following inequality
\begin{equation}\label{eq:final_energy_thm}
\mathcal E_{II}^{n+1} \leq \mathcal E_{II}^{n+\frac{1}{2}} = \mathcal E_I^{n+\frac{1}{2}} \leq \mathcal E_I^{n} \quad \mbox{for }n\geq1,
\end{equation}
which provides an upper bound for the norm of the solution regardless of the time step size, thereby ensuring unconditional stability of the algorithm.
\end{proof}
\begin{remark}
Unconditional stability is a direct consequence of treating implicitly in Step1
the contributions from the $j_{\Upsilon_m}$ Stokes-circuit connections, represented by $\vector{b}_m(Q_{l,m},P_{l,m},t)$, see~\eqref{eq:step1_circ}.
This splitting choice allows us to evaluate at the same time instant  all the quantities in $\mathcal{G}_{RC}$~\eqref{eq:grc}, ensuring that  $\mathcal{G}_{RC}$ can be expressed as a dissipation $\mathcal D_{RC}~$\eqref{eq:drc} even at the discrete level. As a consequence, the proposed splitting algorithm does not introduce uncontrolled artificial terms in the energy~\cite{bertoglio2013,fouchet2015}, ensuring numerical stability with respect to the time step.
\end{remark}

\section{Numerical results}
\label{sec:num_res}
 
In this section we evaluate the performance of the proposed splitting method by comparing the numerical solutions
of the three illustrative examples constructed
in Section~\ref{sec:energy_identity} with their exact solutions reported in the Appendix. 
In particular, we assess the convergence properties of the method for different choices of time step  and we show that the expected  first-order convergence in time is actually achieved. 
Table~\ref{tab:param_test123} lists the parameter values utilized in each example, whose order of magnitudes as similar to those arising in blood flow applications and are reported in the cgs system for ease of reference. 
A common feature of the exact solutions for the three examples considered here is their periodicity in time with given period $\tau = 2\pi/\omega$. The global time step $\Delta t$ is determined by  the number of intervals in each time period, denoted $N_\tau$,
according to the formula $\Delta t = \tau/N_\tau$. Next, each substep is discretized with different time steps, denoted $\Delta t_1$ and $\Delta t_2$, respectively,
using an implicit Euler scheme.
In order to check that the computed numerical solution is periodic of period $\tau$,  we introduce the superscript $per$ to denote the computed solution over each time interval $\big((n-1)\tau,n\tau\big)$, for $n\ge 1$
, so that, for a general expression $\varphi$, we define its discretization $\varphi^{[per]}$ over one period of time   as
\begin{equation}\label{eq:phi:per}
\varphi^{[per]} = \left[
                                 \varphi^{(per-1)*N_\tau},\;
                                 \varphi^{(per-1)*N_\tau + 1},\;
                                 \dots ,\;
                                 \varphi^{per*N_\tau}  
                                 \right]^T \quad \mbox{ for } per\geq 1.
\end{equation}
Finally, we use the following criterion:
\begin{equation} 
\max_{\substack{m\in\mathcal M\\l\in\mathcal L}} \left\{
\begin{array}{c}
\displaystyle
\frac{\|\vector{v}_l^{[per]} - \vector{v}_l^{[per-1]}\|^2_{L^2(\Omega_l)}}{\| \vector{v}_l^{[per-1]}\|^2_{L^2(\Omega_l)}}, \;
\frac{\|p_l^{[per]} - p_l^{[per-1]}\|^2_{L^2(\Omega_l)}}{\| p_l^{[per-1]}\|^2_{L^2(\Omega_l)}}, \;  \\[0.15in]
\displaystyle\frac{\|\vector{y}_m^{[per]} - \vector{y}_m^{[per-1]}\|^2}{\| \vector{y}_m^{[per-1]}\|^2}
 \end{array}
 \right\}
 < \varepsilon _{per},
\end{equation}
to identify the numerical quantities to be compared with the exact solution over one time period. The results reported in this section have been obtained for $\varepsilon_{per} =10^{-6}$ .
Once numerical periodicity is reached, results are extracted and the following normalized errors \cite[Sec. 15.8]{quarteroni2010} are computed
   \begin{align}
   & Err_{\vector{v}} = \left ( \Delta t \sum _{n=(per-1)*N_\tau} ^ {per*N_\tau} \sum _{l\in \mathcal{L}}
   \frac{\| \vector{v}_l ^ n  - \vector{v}_{l,ex} (t^n) \| _{L^2(\Omega_l)} ^2}{\| \vector{v}_{l,ex} (t^n) \| _{L^2(\Omega_l)} ^2} \right)^{1/2}, \label{eq:err_comput_v}\\
   & Err_{p} = \left ( \Delta t \sum _{n=(per-1)*N_\tau} ^ {per*N_\tau} \sum _{l\in \mathcal{L}}
   \frac{\| p_l ^ n  - p_{l,ex} (t^n) \| _{L^2(\Omega_l)} ^2}{\| p_{l,ex} (t^n) \| _{L^2(\Omega_l)} ^2} \right)^{1/2},\\
   & Err_{\vector{y}} = \left ( \Delta t \sum _{n=(per-1)*N_\tau} ^ {per*N_\tau} \sum _{m\in \mathcal{M}}
   \frac{\| \matrix{U}_m^{1/2}(t^n)\vector{y}_m ^ n  -  \matrix{U}_{m,ex}^{1/2}(t^n)\vector{y}_{m,ex} (t^n) \| ^2}
   {\| \matrix{U}_{m,ex}^{1/2}(t^n)\vector{y}_{m,ex} (t^n)\|^2} \right)^{1/2},      \label{eq:err_comput_y}       
   \end{align}
  where the subscript $ex$ denotes the exact solution. We remark that the norms appearing in the definitions for the errors are dictated by the norms in the energy functionals \eqref{eq:E_OU} associated with  the Stokes domains  and the circuits, 
  respectively, since the energy estimates provide the main rationale behind  the design of the numerical algorithm.
  
Numerical results are obtained in the case $d=2$, since the aim of the present work is to set the basis of a methodological approach for the time-discretization
of geometrical multiscale fluid flow models. However, the method applies also to the case $d=3$ in a straightforward manner. The spatial discretization of the Stokes
problem is handled via a triangular uniform mesh of $4000$ elements for each domain $\Omega_l$ and an inf-sup stable finite element pair, namely  (Taylor-Hood) 
$\mathbb P^2 / \mathbb P^1$ elements \cite[Chap. 15]{quarteroni2010}. 
The comparison between numerical approximations and exact solutions is performed over one time period, once periodicity has been reached, for three different global time steps, namely $\Delta t= 0.01, 0.005, 0.001$. Results are obtained with the time subsets $\Delta t _1= \Delta t$ and $\Delta t_2 = \Delta t/s$, with $s=5$ for Example 1 and $s=10$ for Examples 2 and 3.
The computational framework relies on the finite element library Freefem \cite{hecht2012}.

{\bf Example 1.}
The results presented here refer to the case of  nonlinear variable resistance $R_{a}$ and capacitance $C_{a}$, whose analytical expression is reported in the Appendix. We also tested the method in the case of constant $R_{a}$ and $C_{a}$ against the exact solution and obtained similar performances as in the nonlinear case. 
Figure~\ref{fig:num_sol_test1} displays a comparison between the exact solution and different numerical approximations of physical quantities of 
interest, namely pressure $P_{11,1}$ and flow rate $Q_{11,1}$ at the Stokes-circuit interface $S_{11,1}$ (upper panel) and pressure $\pi_{11,1}$ and volume $\omega_{11}$ in the \textsc{0d} circuit (lower panel). We remark that the nonlinearities in $R_{a}$ and $C_{a}$ have been treated explicitly, which means that $R_{a}$ and $C_{a}$ are evaluated at the previous time step.
\begin{figure}[htb]
\centering
\scalebox{0.88}{
  \begin{tikzpicture}[xscale=1]
  \coordinate [label=above:\textcolor{black}{\bf Interface quantities}] (E) at (3.7,2.2);  
\node(label) at (0,0) {\includegraphics[width=0.54\linewidth]{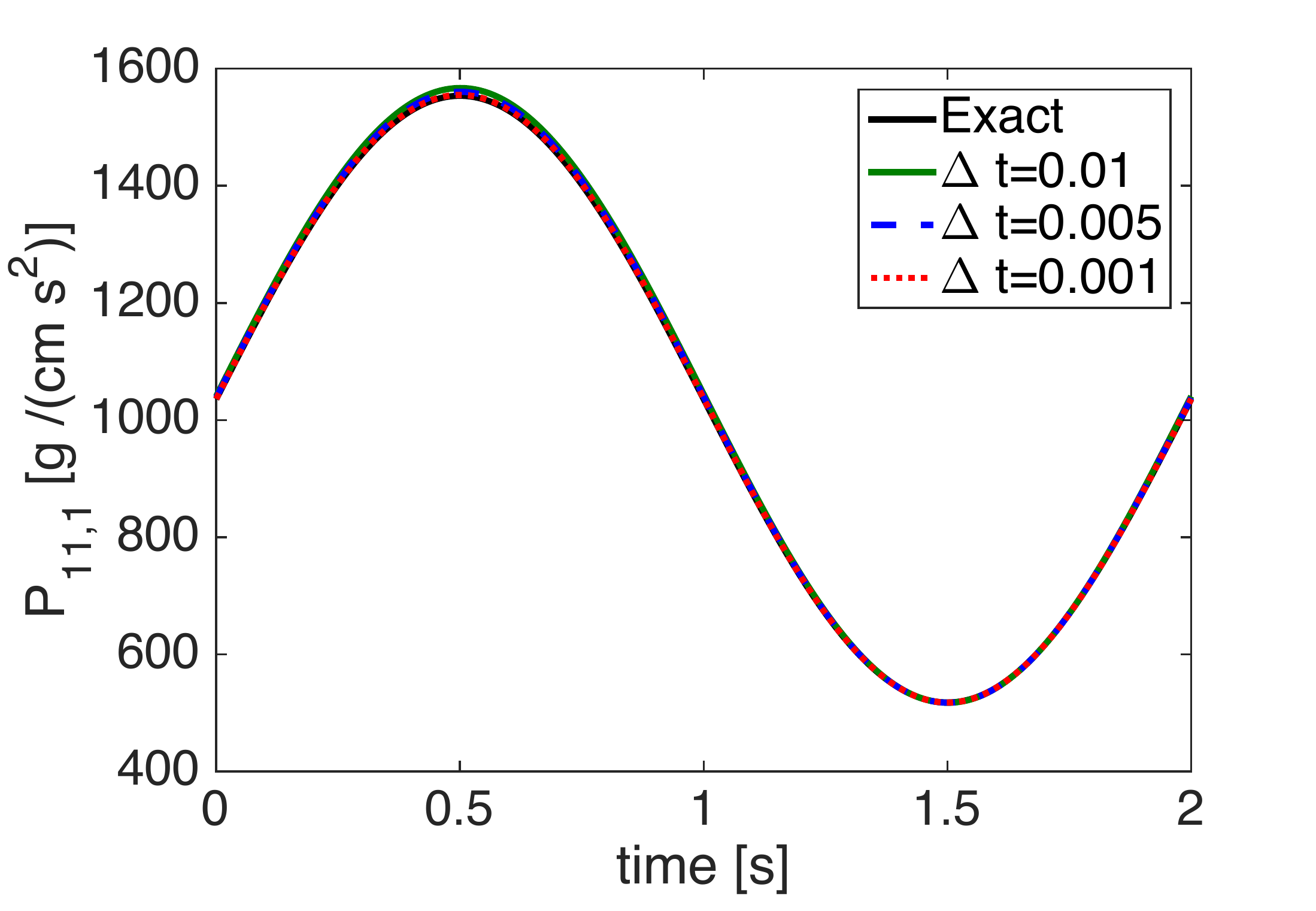}};
\node(label) at (-1.05,-0.8) {\includegraphics[width=0.16\linewidth]{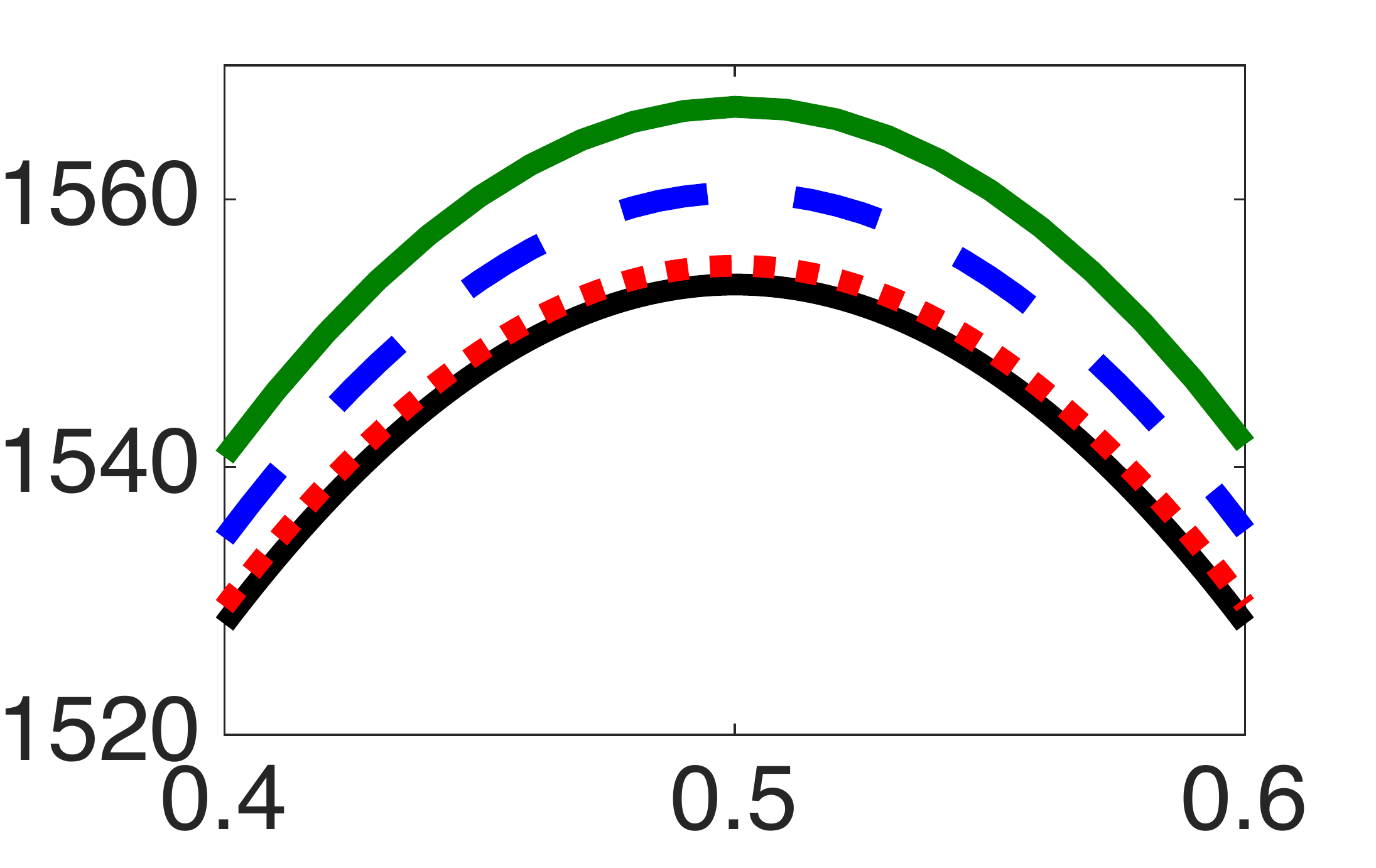}};
\begin{scope}[shift={(-1.25,1.55)},yscale=1.1]
\draw[draw=gray] (0,0) rectangle (0.5,0.4);
 \draw[gray] (0,0) -- (-0.45,-1.663);
  \draw[gray] (0.5,0) -- (0.98,-1.663);
\end{scope}
\node(label) at (7,0) {\includegraphics[width=0.54\linewidth]{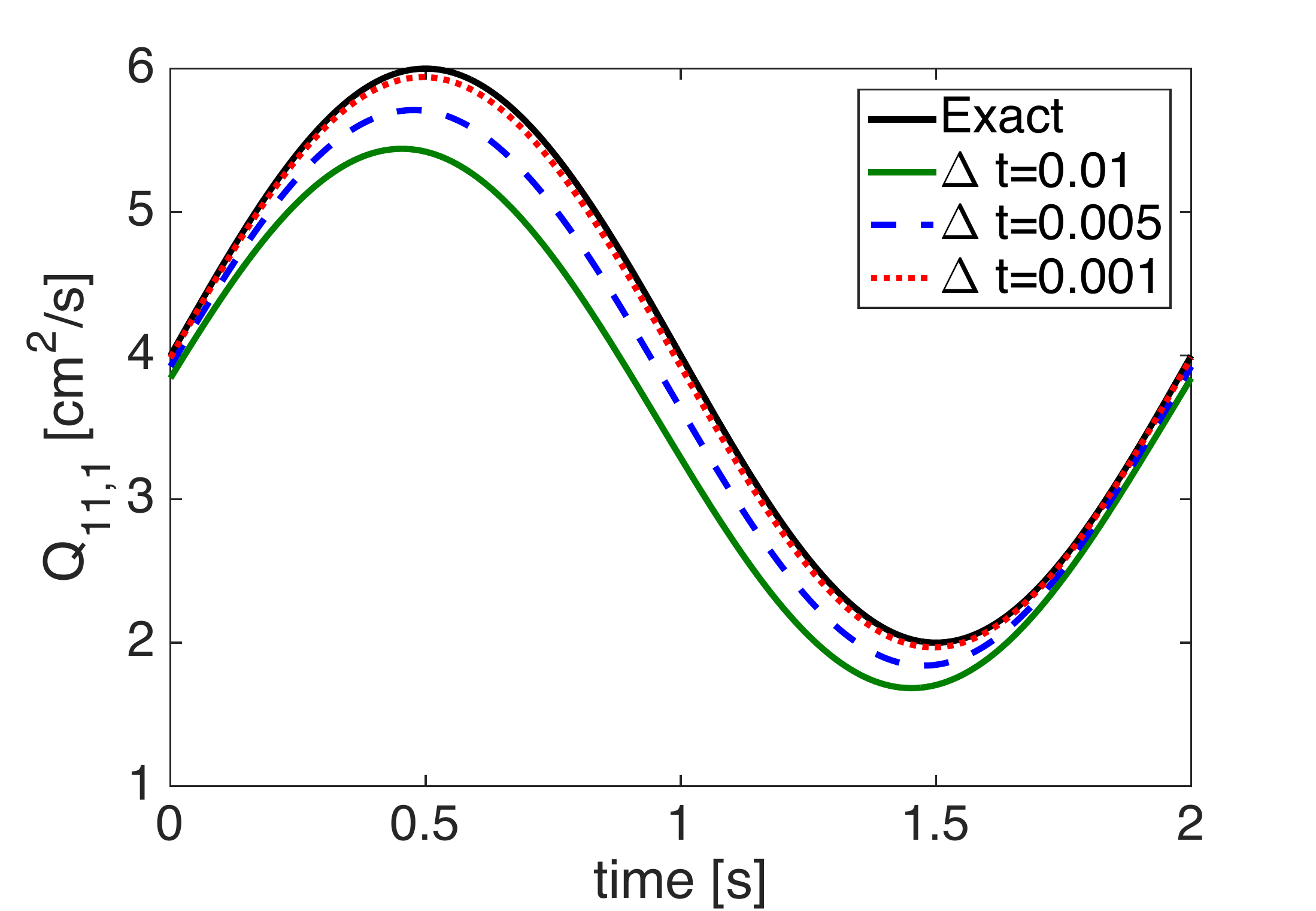}};
\begin{scope}[shift={(0,0.)}]
\coordinate [label=above:\textcolor{black}{\bf \textsc{0d} unknowns}] (E) at (3.7,-2.8);  
\node(label) at (0,-5) {\includegraphics[width=0.54\linewidth]{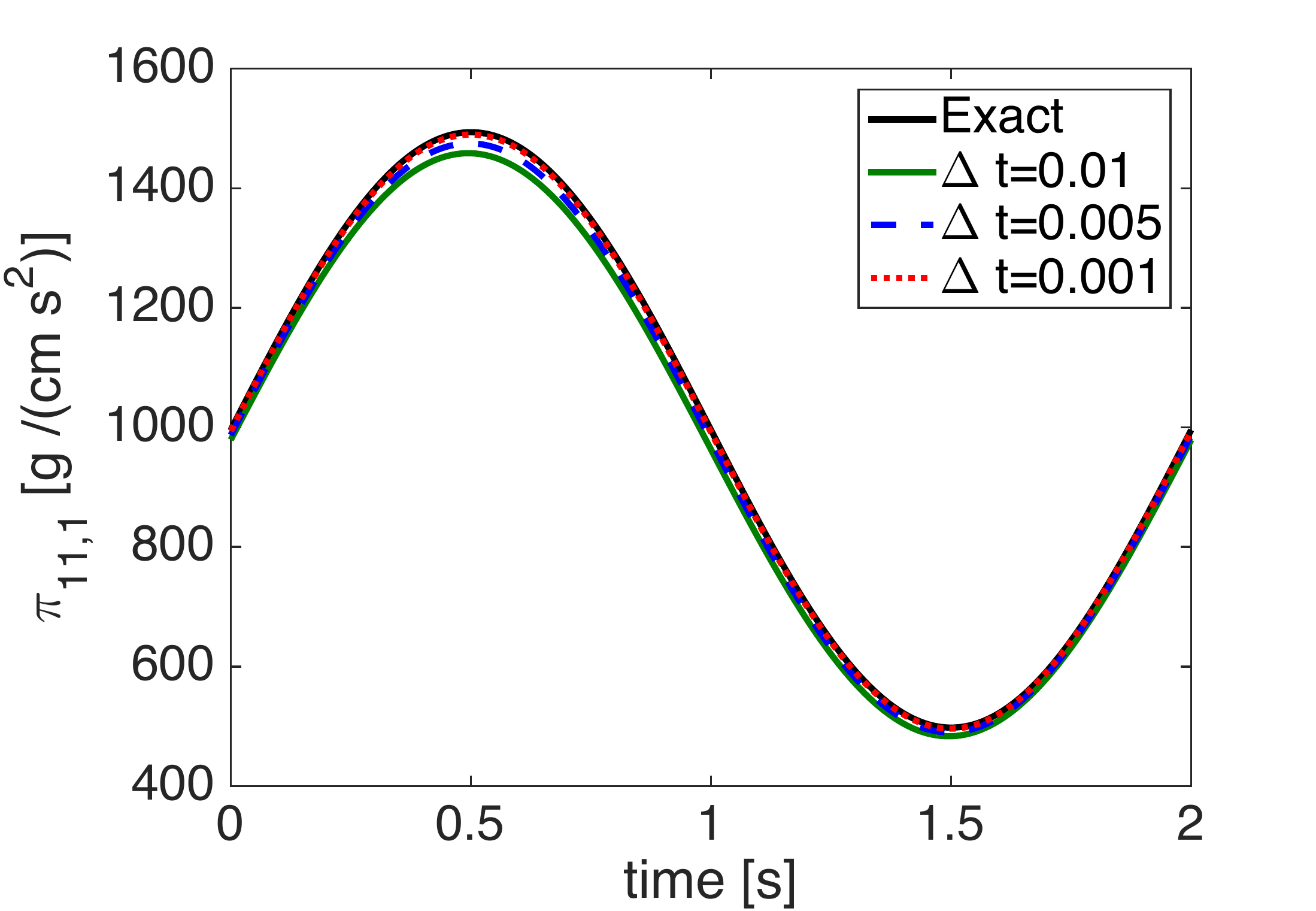}};
\node(label) at (-1.0,-5.8) {\includegraphics[width=0.16\linewidth]{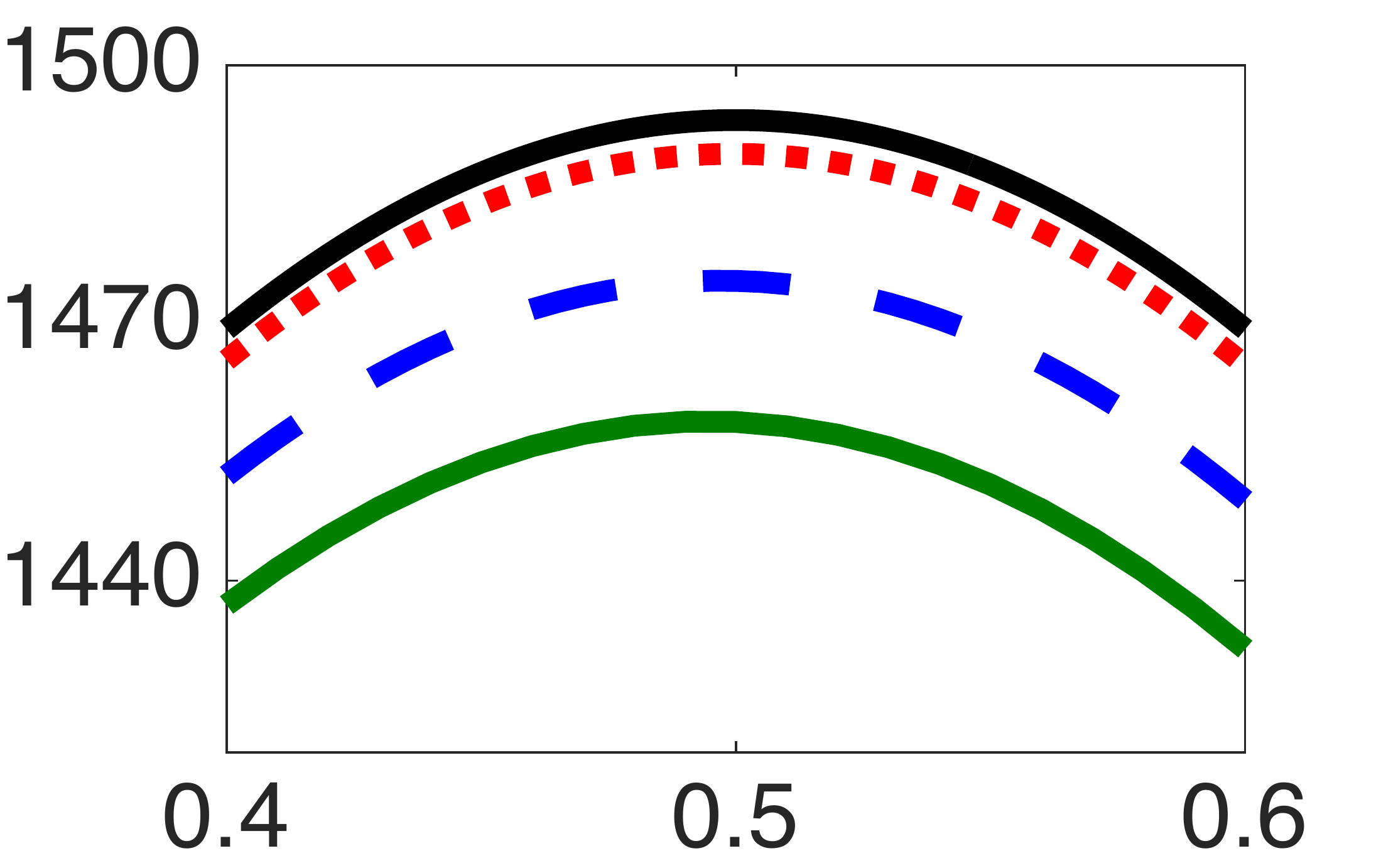}};
\begin{scope}[shift={(-1.2,-3.7)},yscale=1.1]
\draw[draw=gray] (0,0) rectangle (0.5,0.4);
 \draw[gray] (0,0) -- (-0.45,-1.44);
  \draw[gray] (0.5,0) -- (0.98,-1.44);
\end{scope}
\node(label) at (7,-5) {\includegraphics[width=0.54\linewidth]{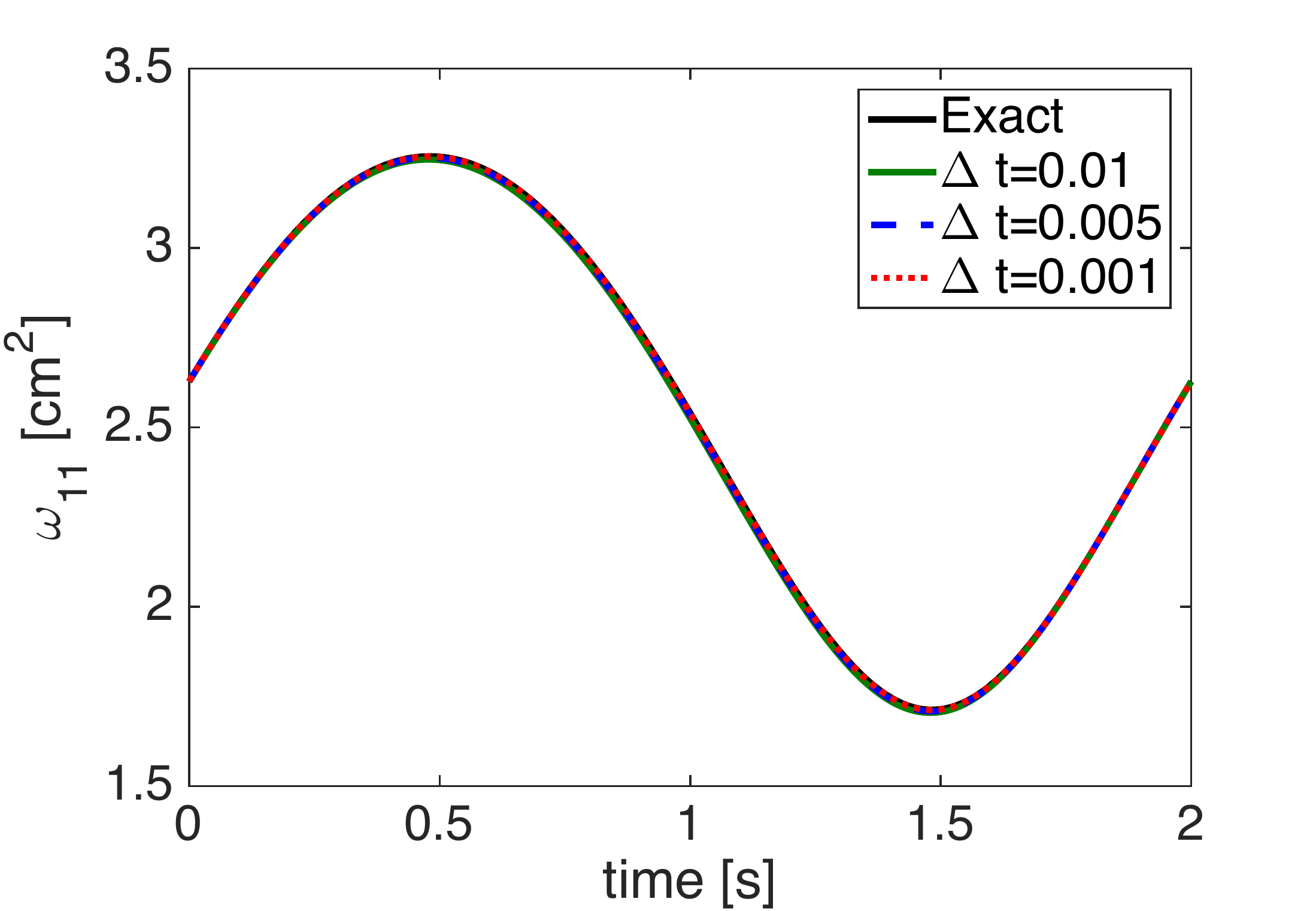}};
\node(label) at (5.87,-5.8) {\includegraphics[width=0.16\linewidth]{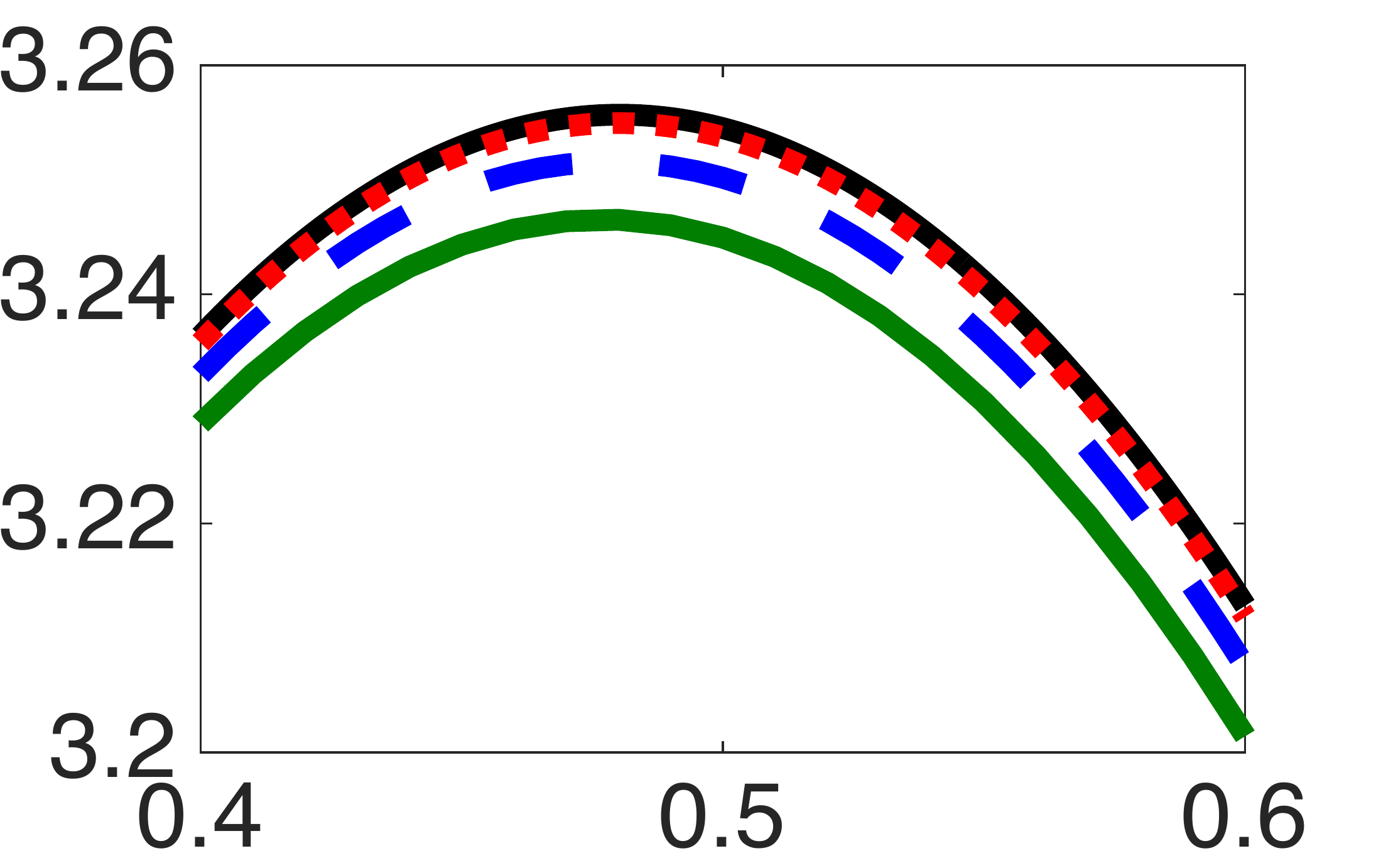}};
\begin{scope}[shift={(5.65,-3.72)},yscale=1.1]
\draw[draw=gray] (0,0) rectangle (0.5,0.4);
 \draw[gray] (0,0) -- (-0.47,-1.42);
  \draw[gray] (0.5,0) -- (1.01,-1.42);
\end{scope}
\end{scope}
\end{tikzpicture}}
    \caption{\textit{Example 1.} Comparison between the exact solution and the corresponding numerical approximation for 
    interface quantities and \textsc{0d} unknowns, for three time steps $\Delta t=0.01,\ 0.005,\ 0.001$, over one time period once the numerical  periodicity has been reached.}
  \label{fig:num_sol_test1}
\end{figure}
A good agreement is obtained for $P_{11,1}$,  $\pi_{11,1}$ and $\omega_{11}$  for $\Delta t=0.01$, and the error decreases as $\Delta t$ is reduced. 
The numerical approximation of $Q_{11,1}$ captures the periodicity of the solution even for $\Delta t=0.01$, but the peaks are lower than those exhibited by the exact solution. As $\Delta t$ is the reduced, the computed $Q_{11,1}$ approaches the exact peaks of the interface flow rate, thereby capturing the full dynamics of the problem.
Figure~\ref{fig:num_sol_test1} also shows that the numerical solution is not affected by spurious oscillations or instabilities, even for the largest time step. These findings confirm that the choice of the time step affects the accuracy of the computed solution but not the stability of the numerical scheme, thereby supporting the unconditional stability result proved in Theorem~\ref{th:uncond_stab}.

{\bf Example 2.}
Figure~\ref{fig:num_result_ex2} displays a comparison between the exact solution and
 pressures and flow rates at the Stokes-circuit interfaces, namely  $P_{11,1}$ and $Q_{11,1}$ at the interface $S_{11,1}$ and $P_{21,1}$ and $Q_{21,1}$ at the interface $S_{21,1}$ (upper panel) and the state variables $\pi_{11,1}$, $\pi_{21,1}$ and $\omega_{11}$ pertaining to the \textsc{0d} circuit $\Upsilon_1$ (lower panel).
Similarly to Example 1, the comparison shows very good agreement in the pressures $P_{11,1}$, $P_{21,1}$, $\pi_{11,1}$ and $\pi_{21,1}$, and a satisfactory approximation in the flow rates $Q_{11,1}$, $Q_{21,1}$ and $\omega_{11}$, especially when the time step is small. 
\begin{figure}[htbp]
\centering
\scalebox{0.88}{
  \begin{tikzpicture}[xscale=1]
  \coordinate [label=above:\textcolor{black}{\bf Interface quantities}] (E) at (3.7,2.2);  
\node(label) at (0,0) {\includegraphics[width=0.54\linewidth]{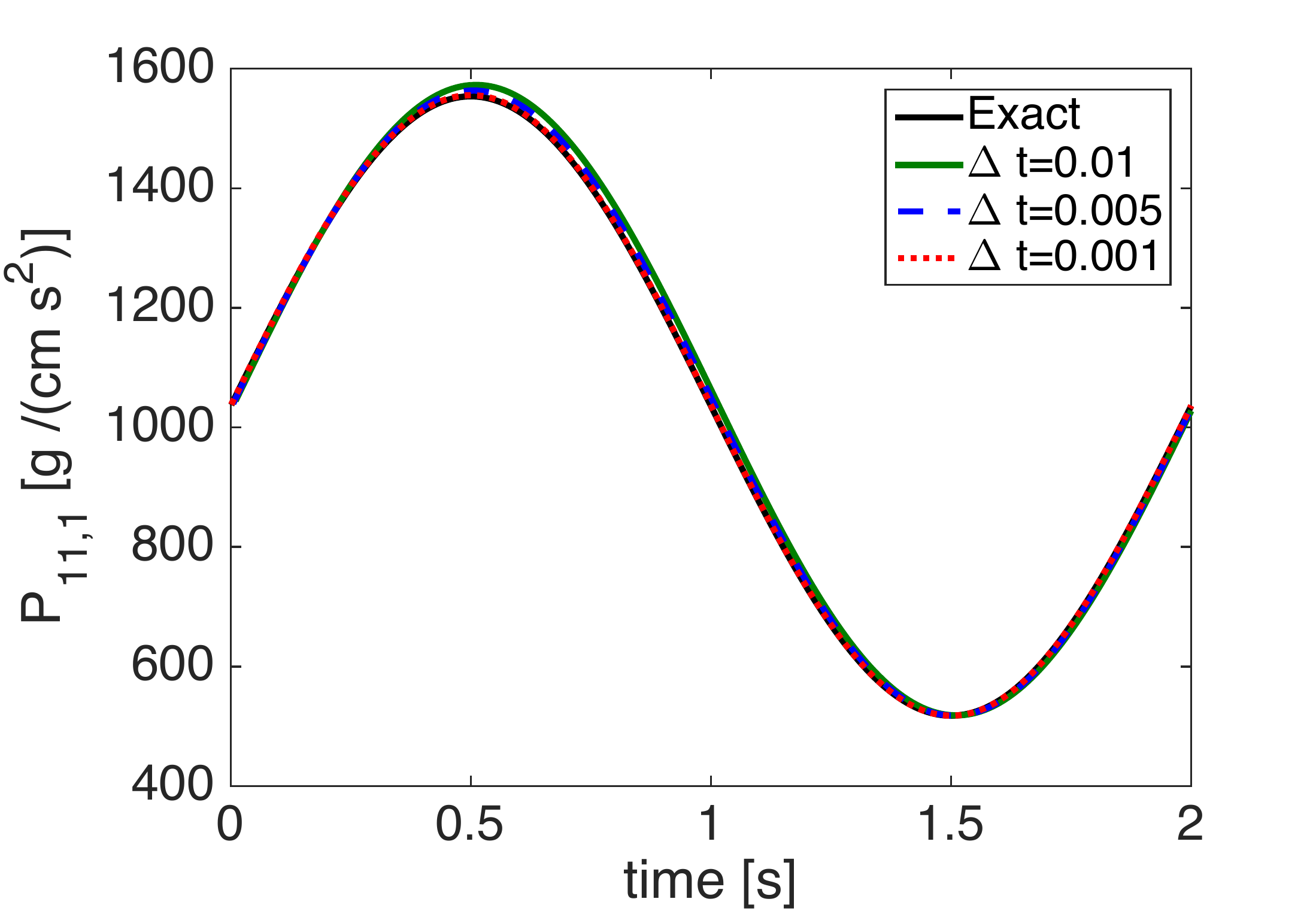}};
\node(label) at (-1.0,-0.8) {\includegraphics[width=0.16\linewidth]{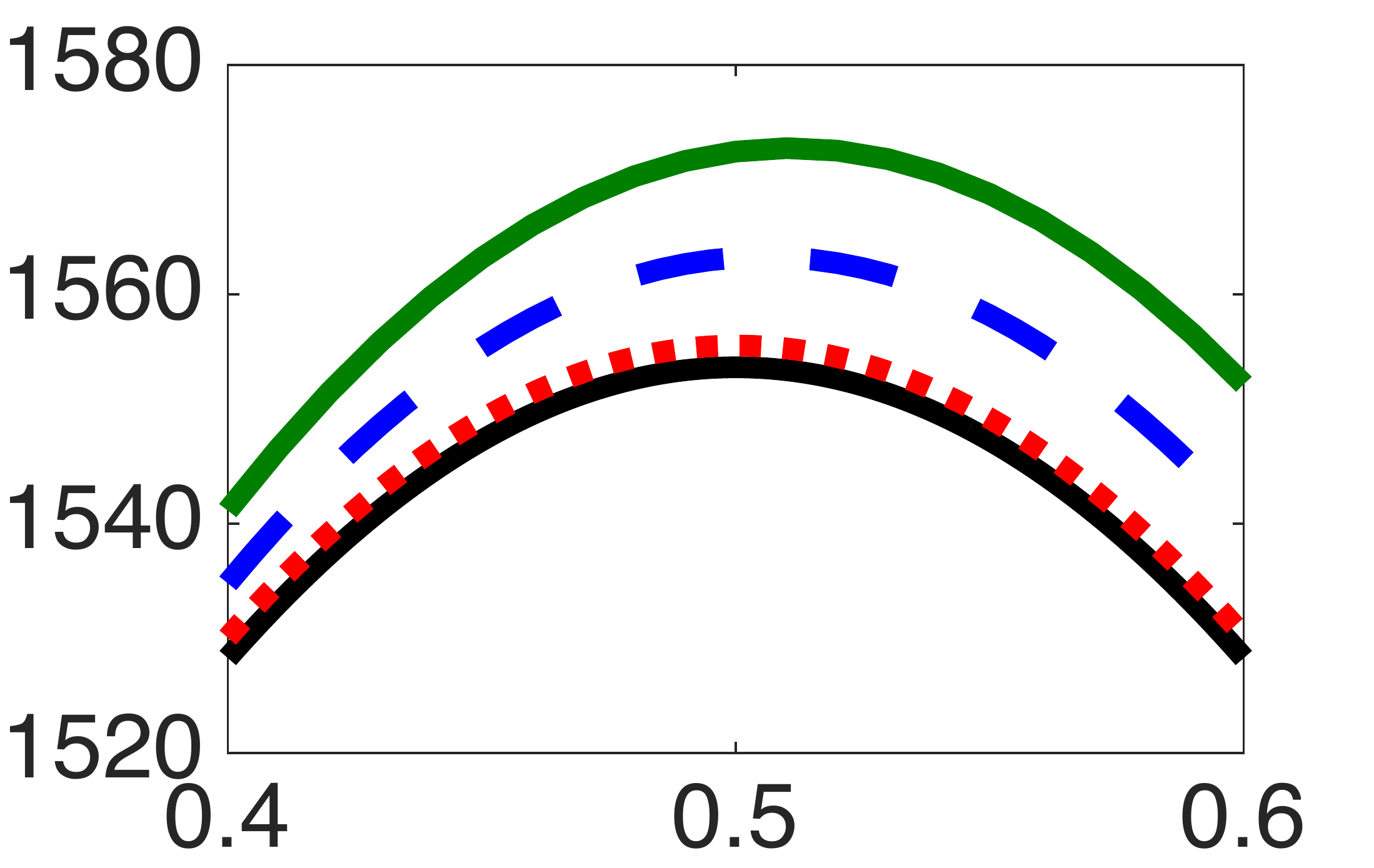}};
\begin{scope}[shift={(-1.2,1.55)},yscale=1.1]
\draw[draw=gray] (0,0) rectangle (0.5,0.4);
 \draw[gray] (0,0) -- (-0.45,-1.663);
  \draw[gray] (0.5,0) -- (0.98,-1.663);
\end{scope}
\node(label) at (7,0) {\includegraphics[width=0.54\linewidth]{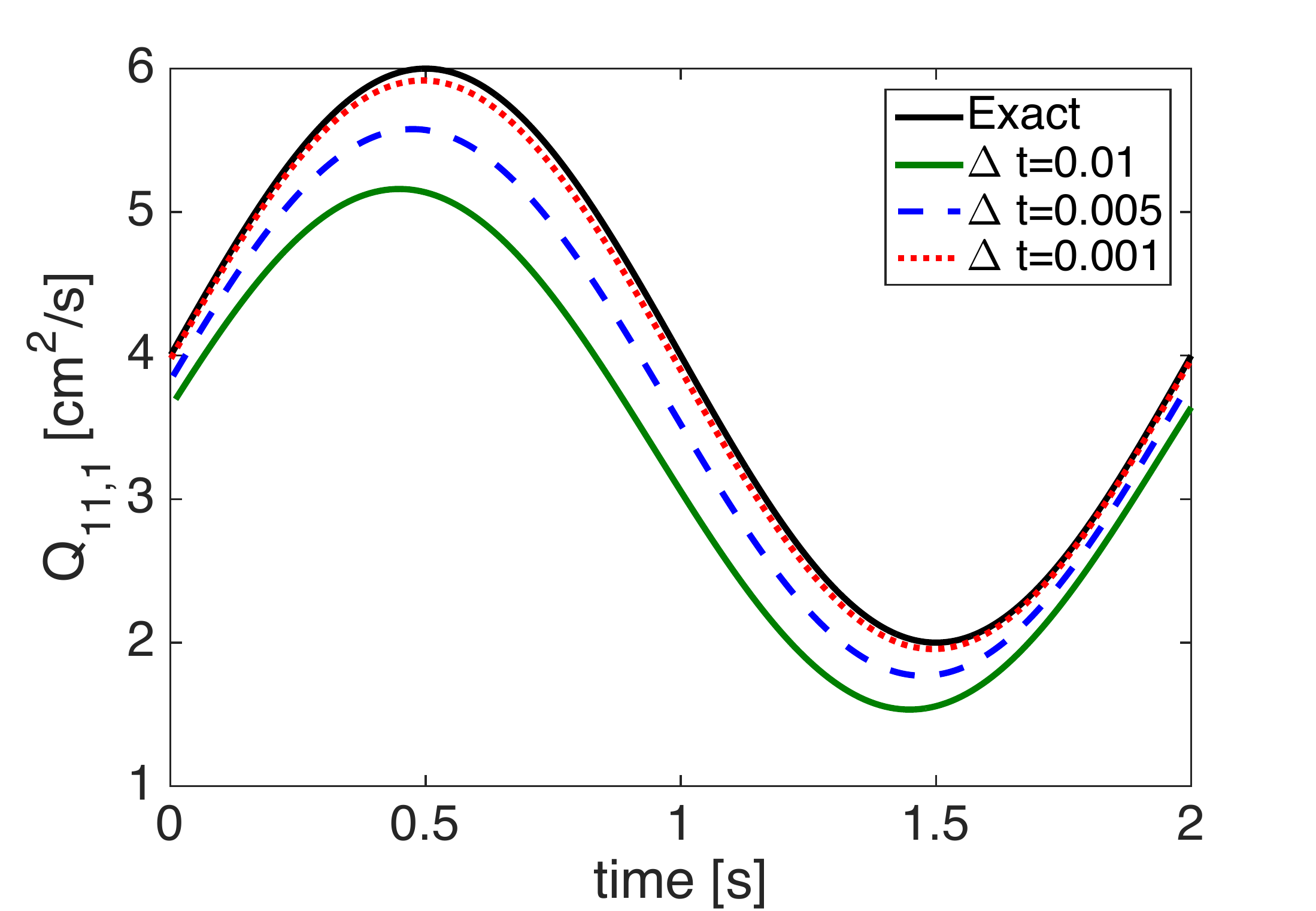}};    

\begin{scope}[shift={(0,0.5)}]
\node(label) at (0,-5) {\includegraphics[width=0.54\linewidth]{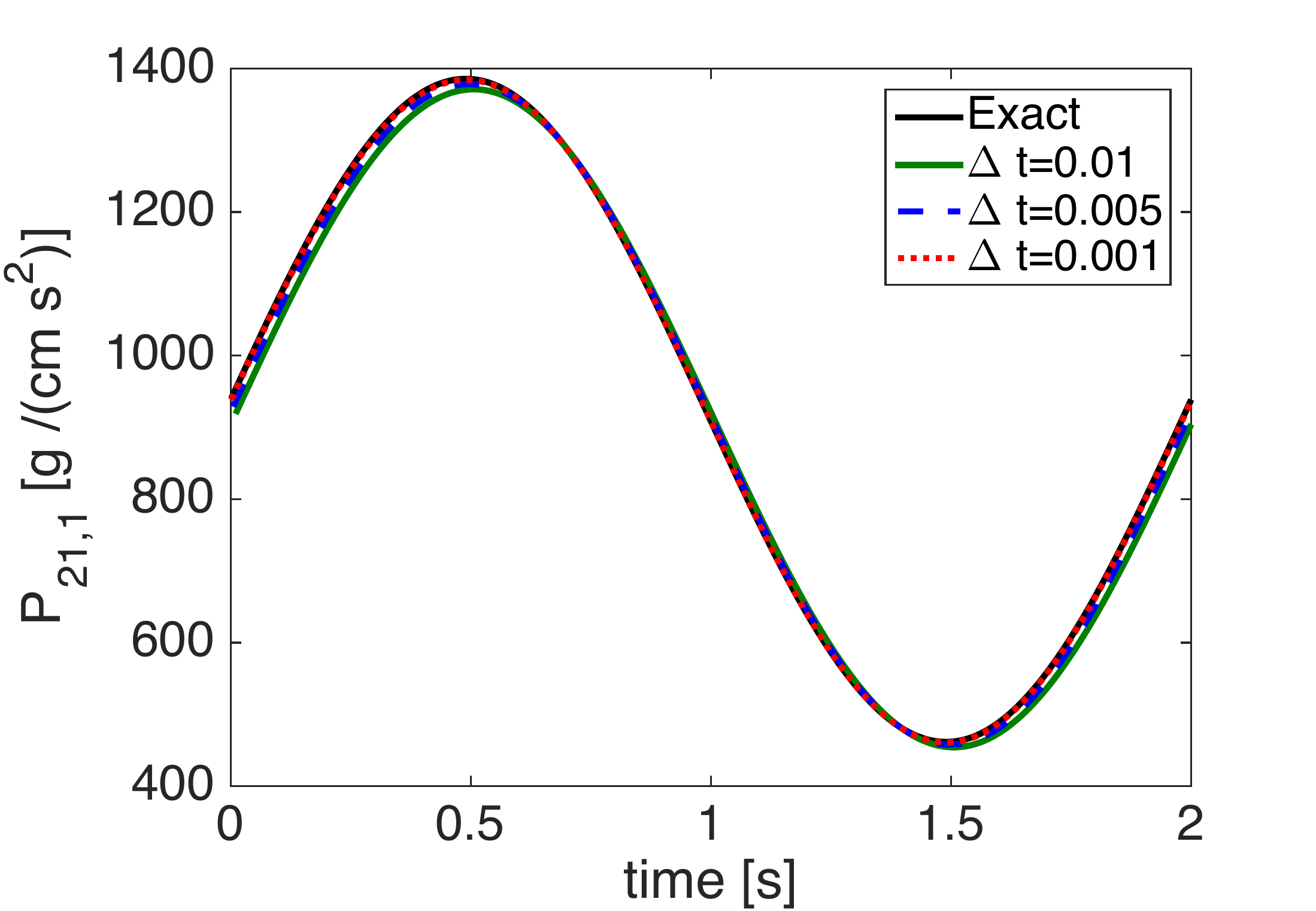}};
\node(label) at (-1.0,-5.8) {\includegraphics[width=0.16\linewidth]{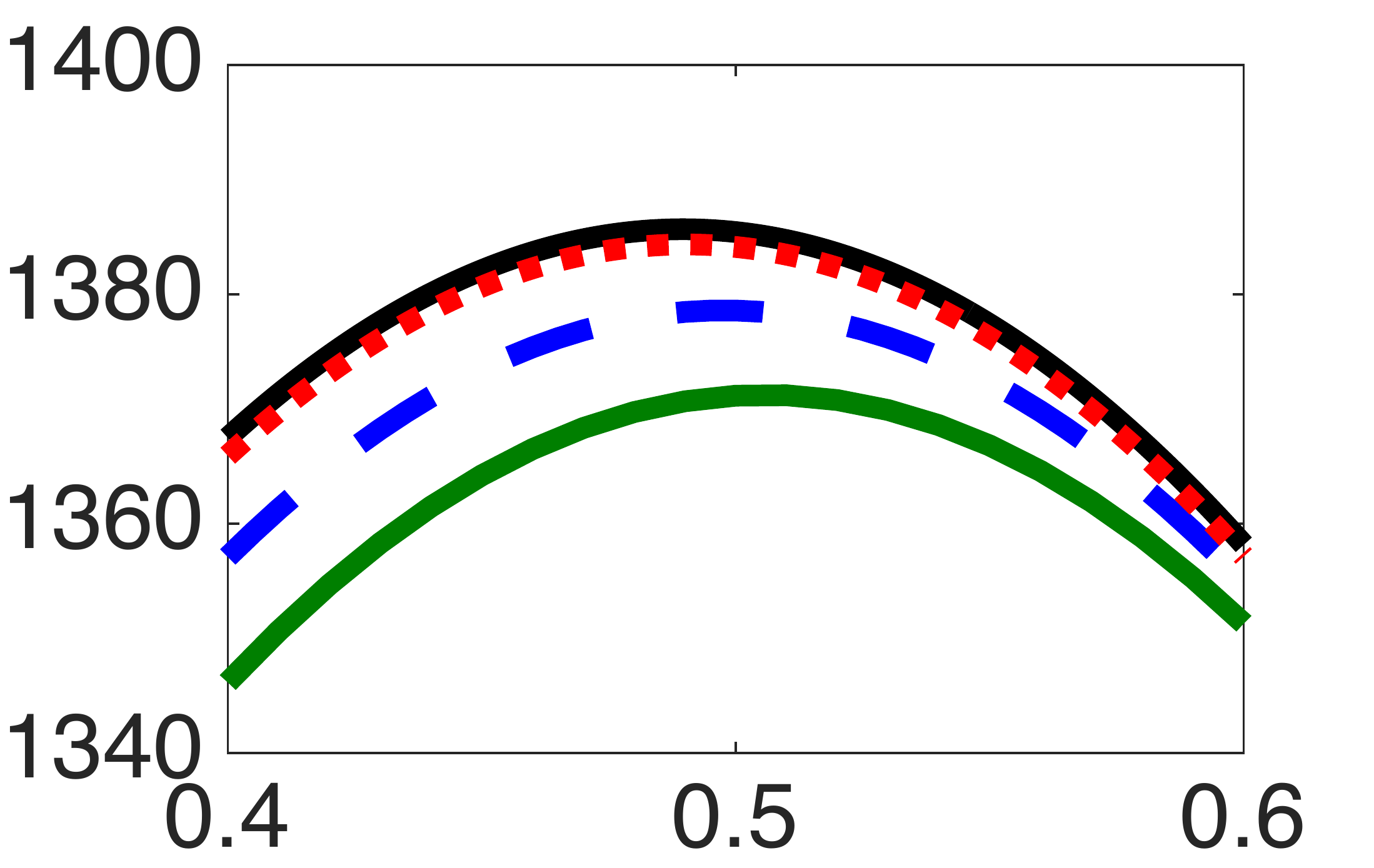}};
\begin{scope}[shift={(-1.2,-3.4)},yscale=1.1]
\draw[draw=gray] (0,0) rectangle (0.5,0.4);
 \draw[gray] (0,0) -- (-0.45,-1.71);
  \draw[gray] (0.5,0) -- (0.98,-1.71);
\end{scope}
\node(label) at (7,-5) {\includegraphics[width=0.54\linewidth]{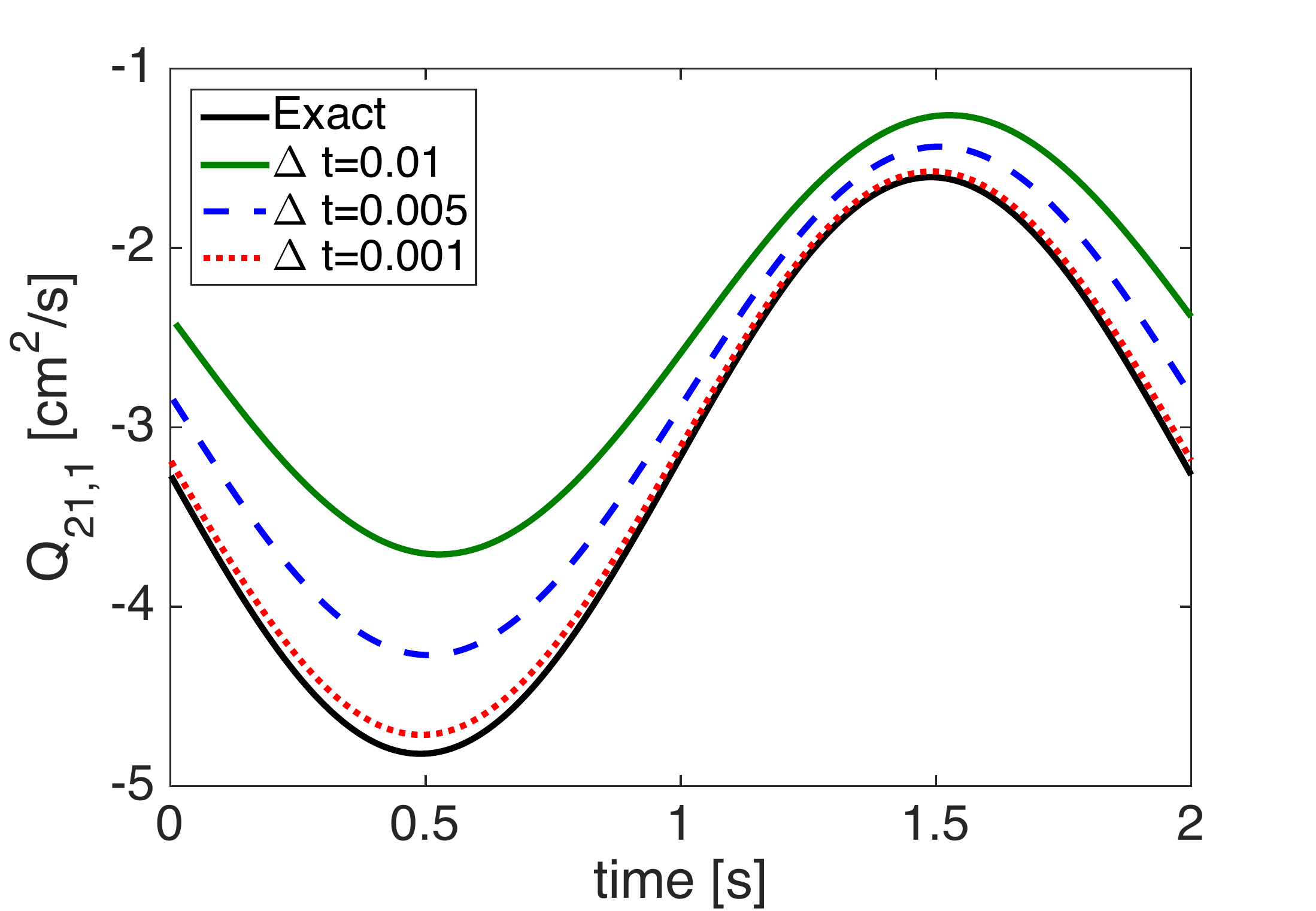}};
\end{scope}

\begin{scope}[shift={(0,-9.5)}]
  \coordinate [label=above:\textcolor{black}{\bf \textsc{0d} unknowns}] (E) at (3.7,2.2);  
\node(label) at (0,0) {\includegraphics[width=0.54\linewidth]{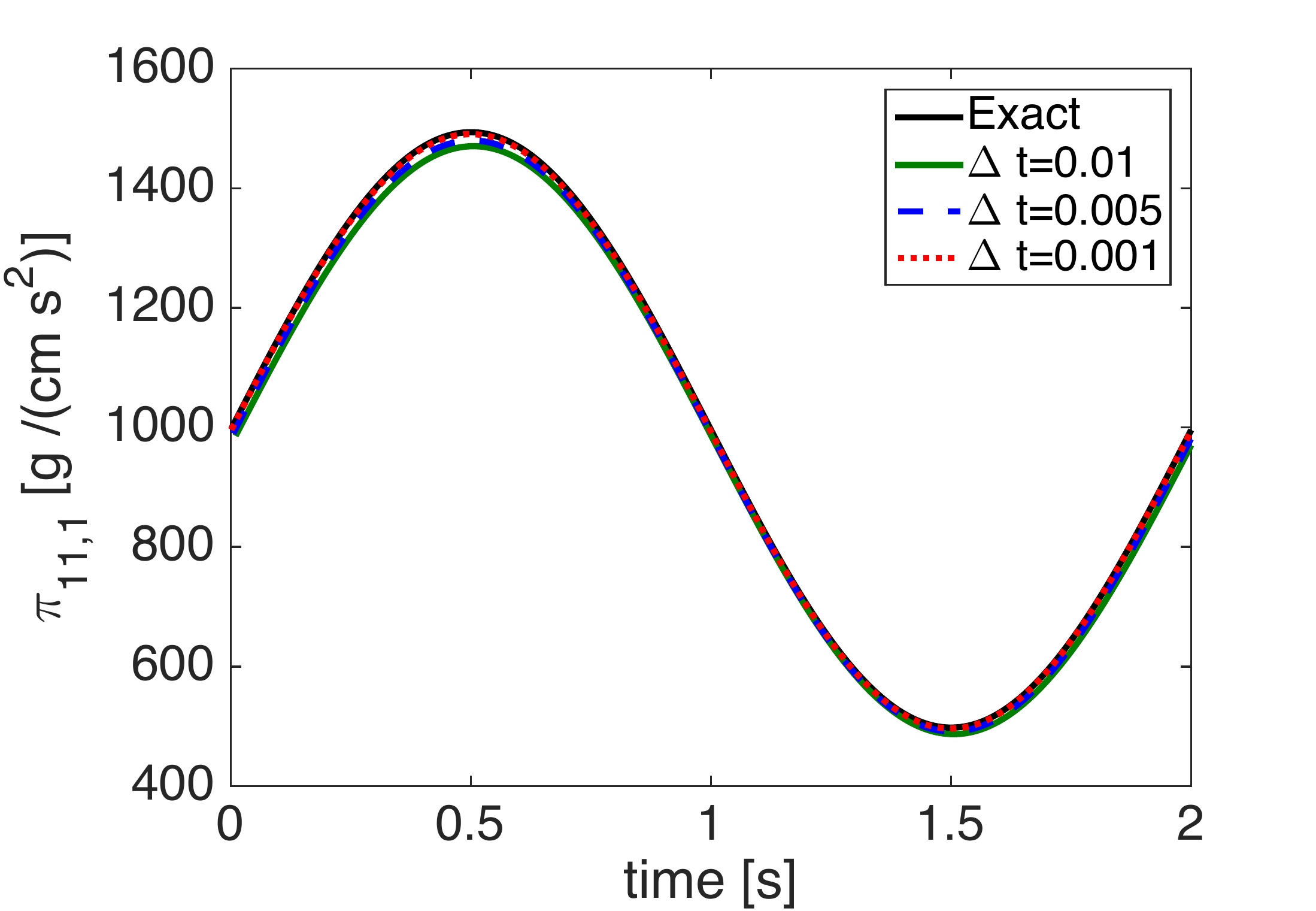}};
\node(label) at (-1.0,-0.8) {\includegraphics[width=0.16\linewidth]{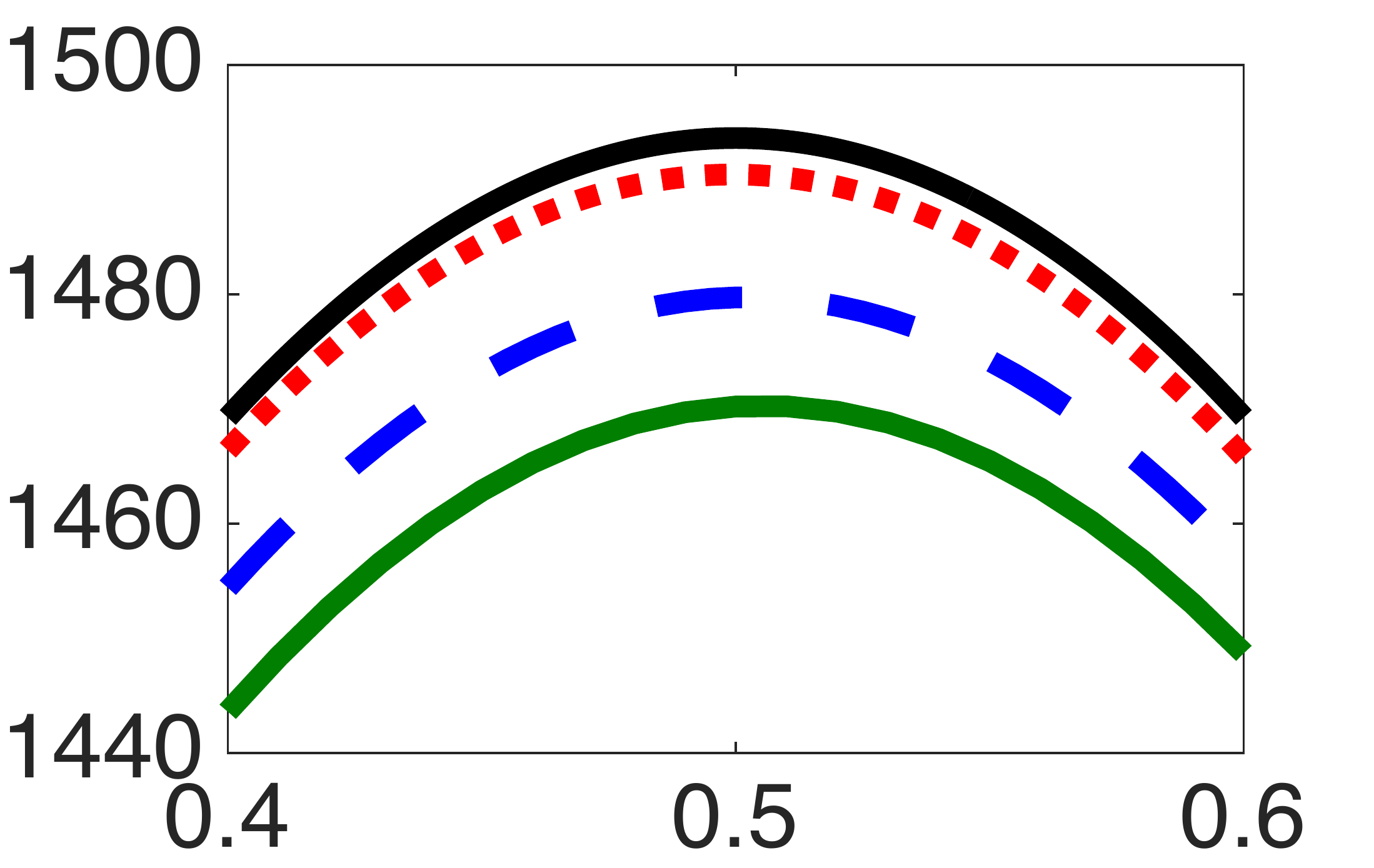}};
\begin{scope}[shift={(-1.2,1.33)},yscale=1.1]
\draw[draw=gray] (0,0) rectangle (0.5,0.4);
 \draw[gray] (0,0) -- (-0.45,-1.462);
  \draw[gray] (0.5,0) -- (0.98,-1.462);
\end{scope}
\node(label) at (7,-0) {\includegraphics[width=0.54\linewidth]{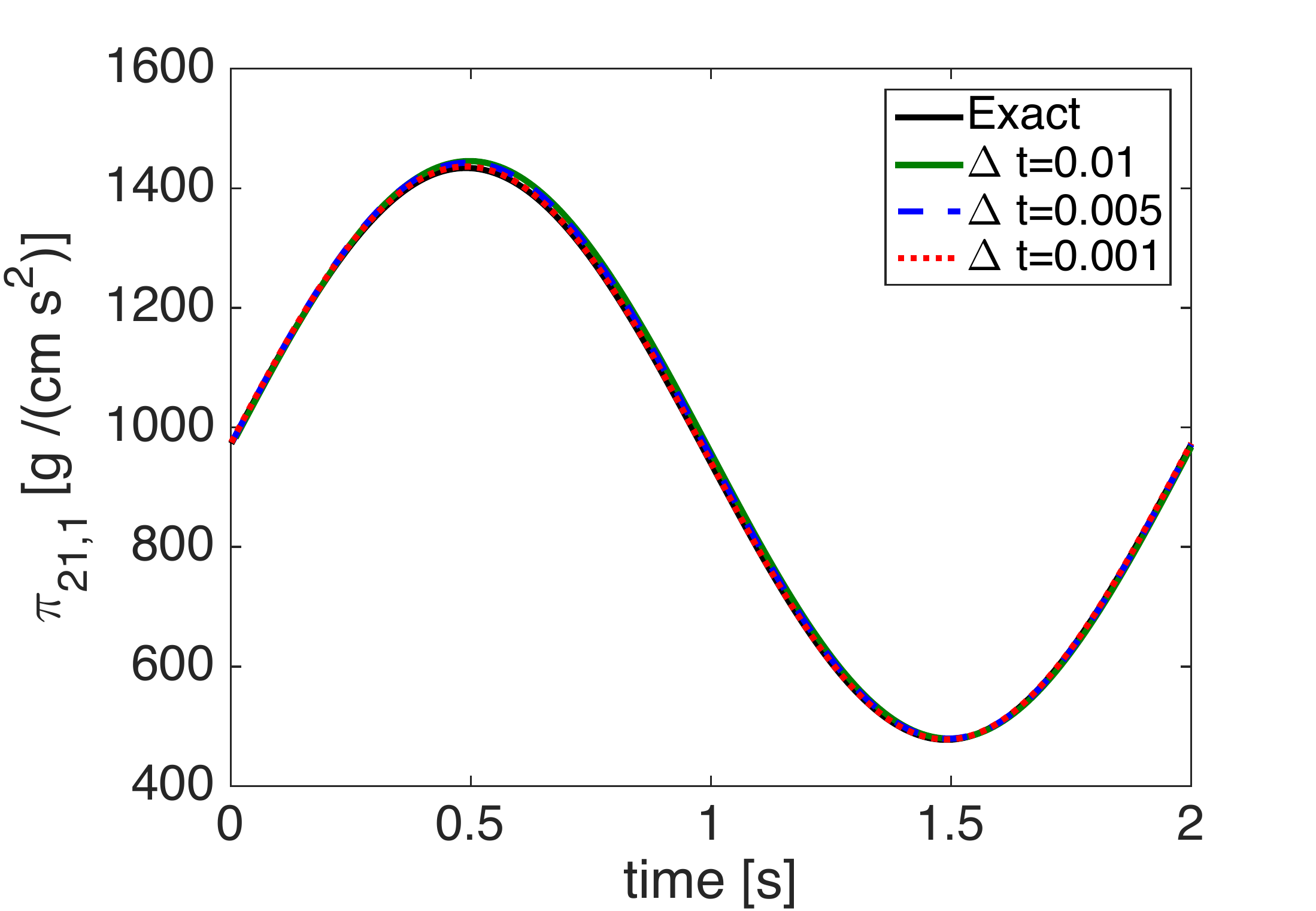}};
\node(label) at (6,-0.8) {\includegraphics[width=0.16\linewidth]{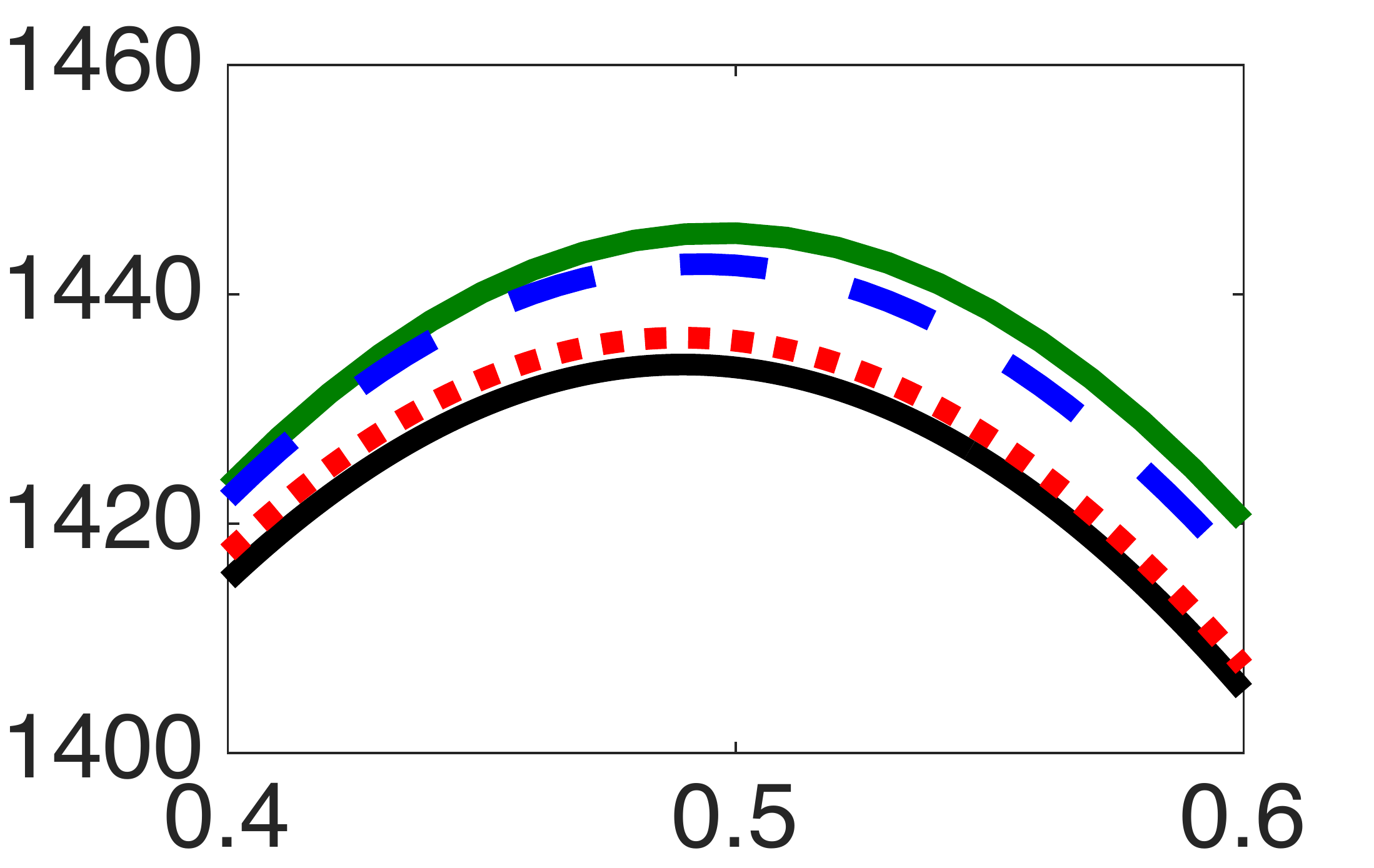}};
\begin{scope}[shift={(5.8,1.22)},yscale=1.1]
\draw[draw=gray] (0,0) rectangle (0.5,0.4);
 \draw[gray] (0,0) -- (-0.45,-1.36);
  \draw[gray] (0.5,0) -- (0.99,-1.36);
\end{scope}

\begin{scope}[shift={(3.7,0.5)}]
\node(label) at (0,-5) {\includegraphics[width=0.54\linewidth]{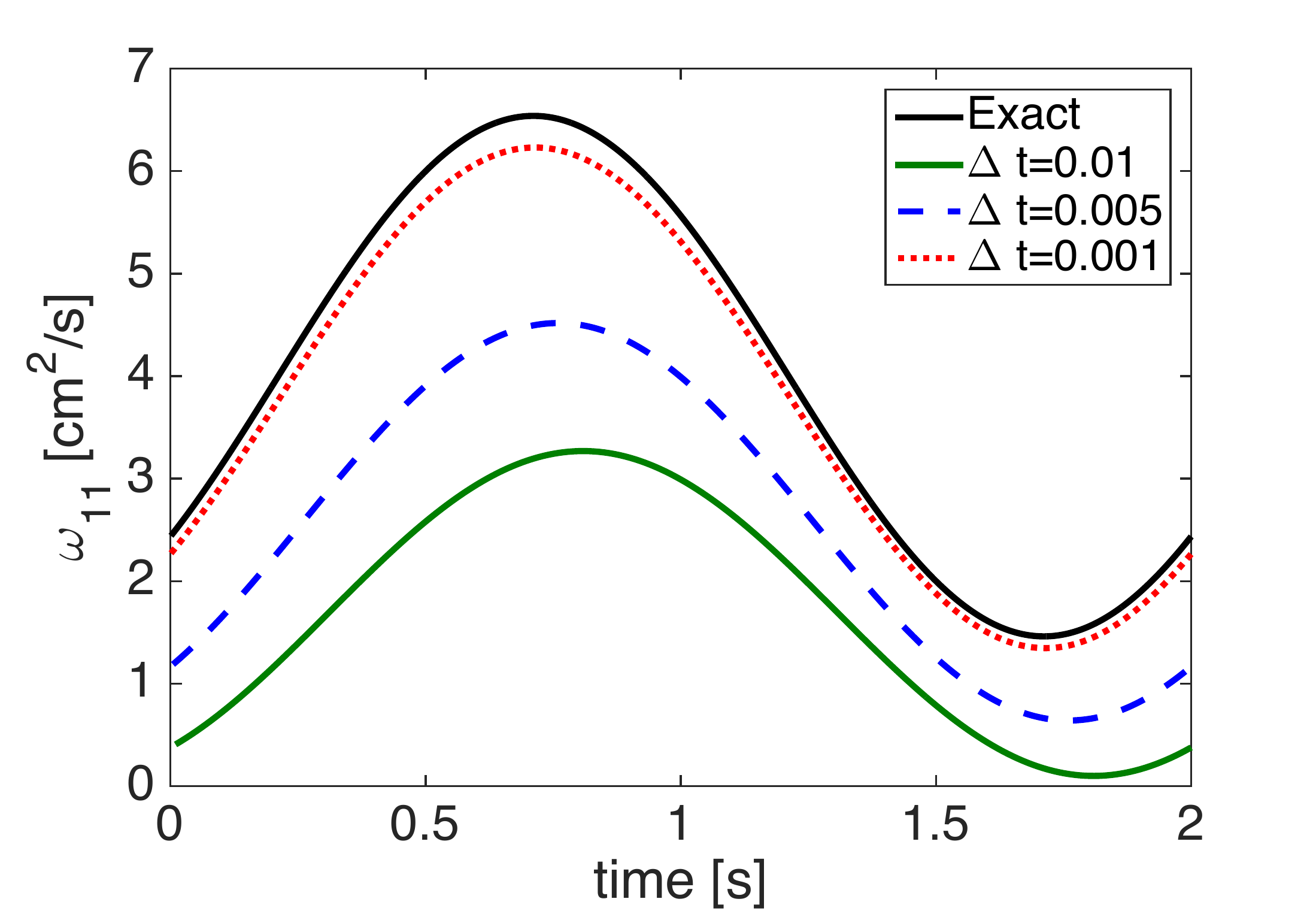}};
\end{scope}

\end{scope}

\end{tikzpicture}}
    \caption{\textit{Example 2.} Comparison between the exact solution and the corresponding numerical approximation for 
    interface quantities and \textsc{0d} unknowns, for three time steps $\Delta t=0.01,\ 0.005,\ 0.001$, over one period once the periodicity is reached.}
  \label{fig:num_result_ex2}
\end{figure}

\textbf{Example 3.}
Figure~\ref{fig:num_result_ex3} displays a comparison between the exact solution and
 pressures and flow rates at the Stokes-circuit interfaces, namely  $P_{11,1}$ and $Q_{11,1}$ at the interface $S_{11,1}$ and $P_{11,2}$ and $Q_{11,2}$ at the interface $S_{11,2}$ (upper panel) and the state variables $\pi_{11,1}$, $\pi_{11,2}$ and $\omega_{11}$ pertaining to the \textsc{0d} circuit $\Upsilon_1$ (lower panel). 
Similarly to Examples 1 and 2, the comparison shows very good agreement in the pressures, both for the interface pressures $P_{11,1}$ and $P_{11,2}$ and for the nodal pressures $\pi_{11,1}$ and $\pi_{11,2}$. The approximation of the interface flow rates $Q_{11,1}$ and $Q_{11,2}$ improves as $\Delta t$ decreases, capturing periodicity and peaks. Conversely, the numerical approximation of the flow rate $\omega_{11}$ shows a very good agreement even for $\Delta t=0.01$. Note that, in contrast with Examples 1 and 2, the Stokes problem in $\Omega_1$ does not include any external forcing term. Thus, the fully coupled problem is driven only by the time evolution of the pressures $\widetilde{p}_a$ and $\widetilde{p}_b$ imposed by the voltage generators within the \textsc{0d} circuit.\\
\begin{figure}[htbp]
\centering
\scalebox{0.88}{
  \begin{tikzpicture}[xscale=1]
    \coordinate [label=above:\textcolor{black}{\bf Interface quantities}] (E) at (3.7,2.2);  
\node(label) at (0,0) {\includegraphics[width=0.54\linewidth]{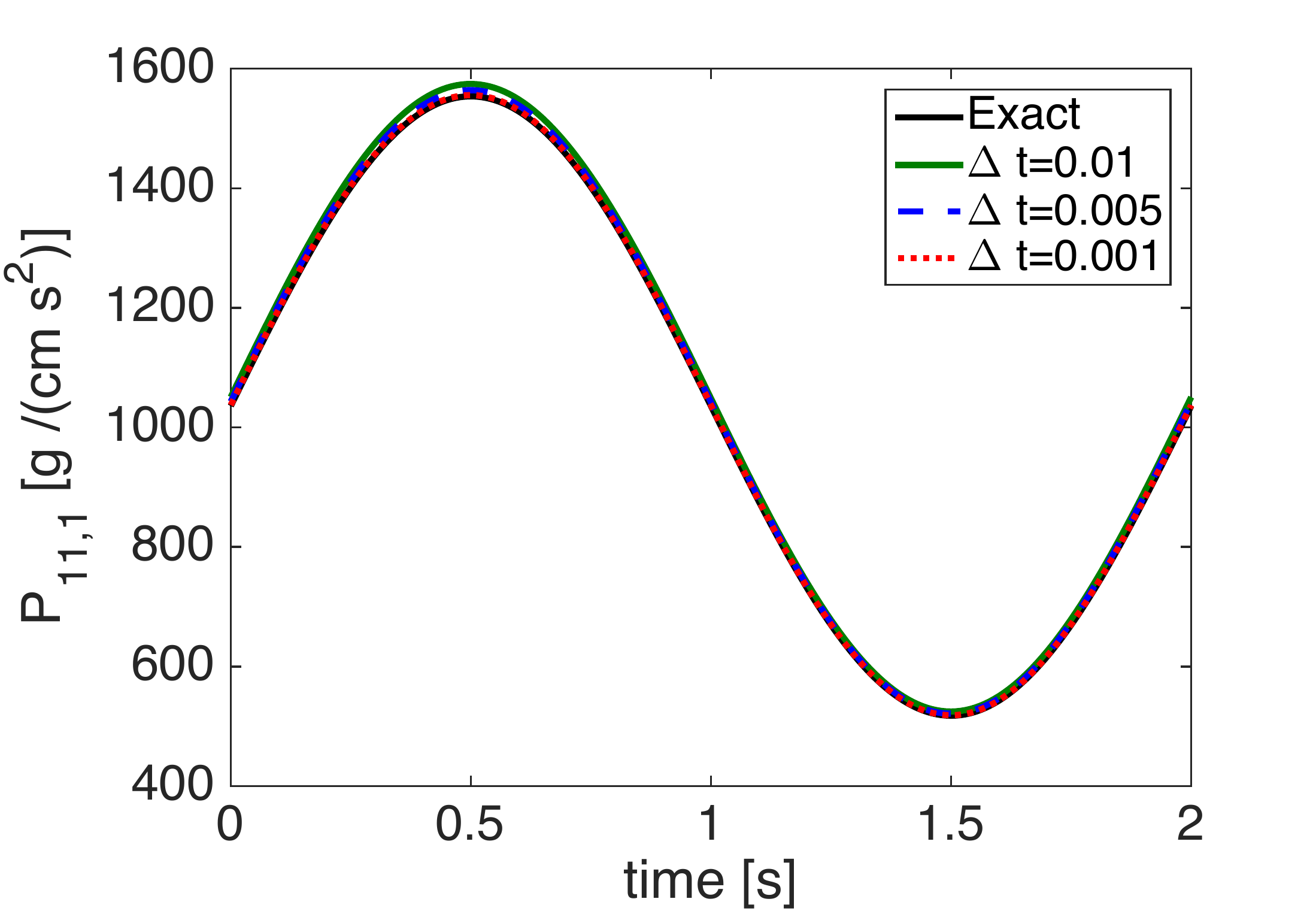}};
\node(label) at (-1.0,-0.8) {\includegraphics[width=0.16\linewidth]{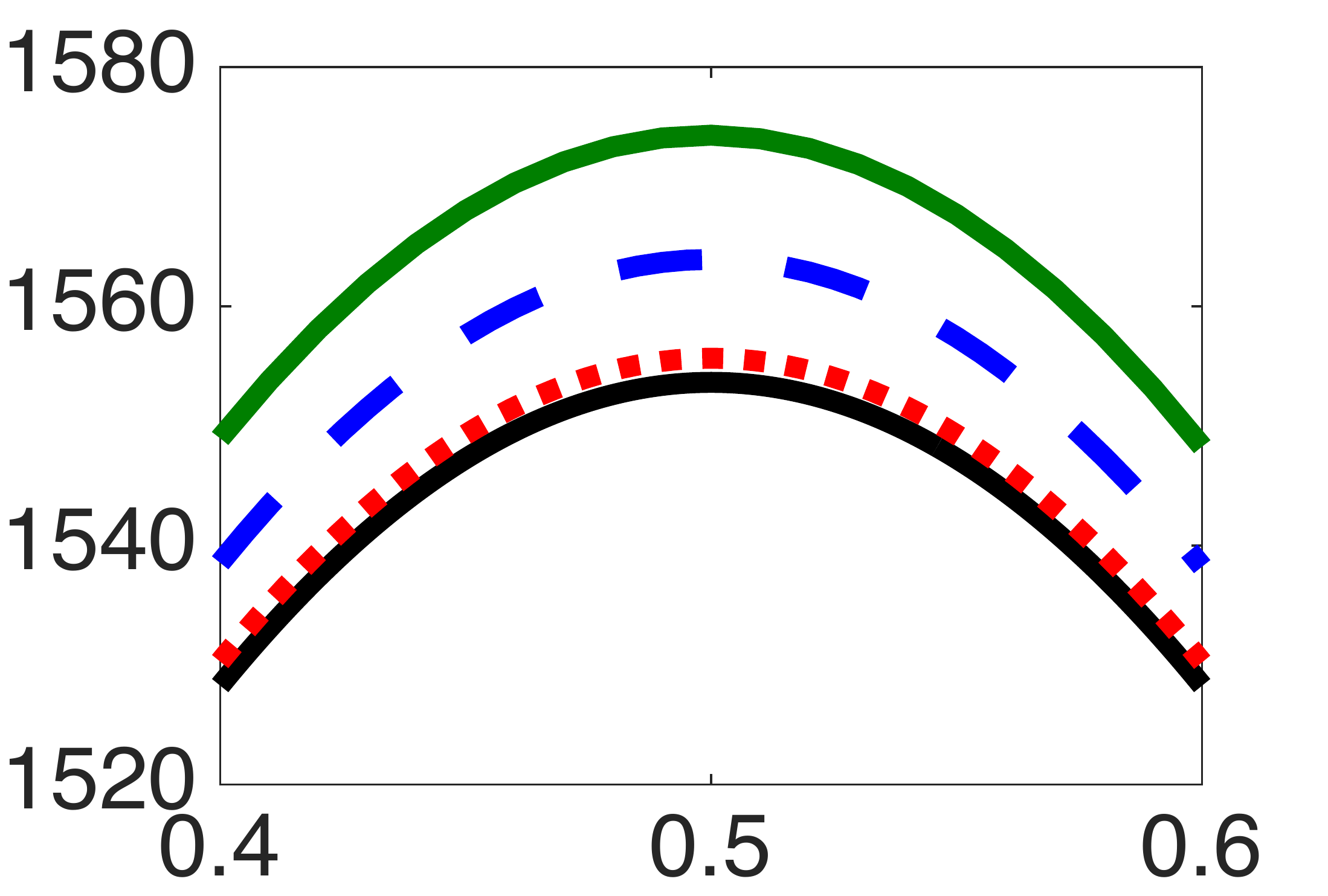}};
\begin{scope}[shift={(-1.2,1.55)},yscale=1.1]
\draw[draw=gray] (0,0) rectangle (0.5,0.4);
 \draw[gray] (0,0) -- (-0.45,-1.63);
  \draw[gray] (0.5,0) -- (0.98,-1.63);
\end{scope}
\node(label) at (7,0) {\includegraphics[width=0.54\linewidth]{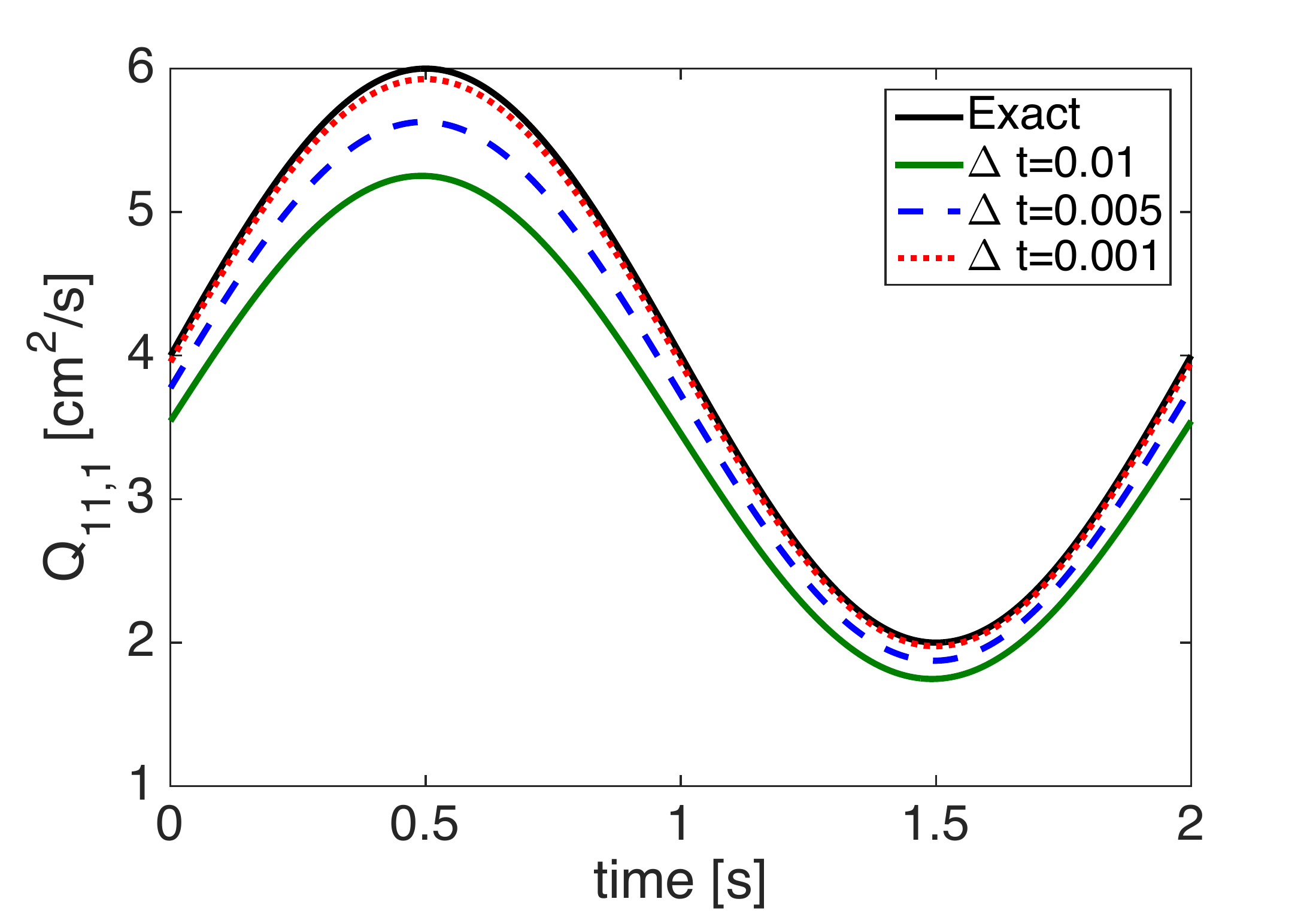}};    

\begin{scope}[shift={(0,0.5)}]
\node(label) at (0,-5) {\includegraphics[width=0.54\linewidth]{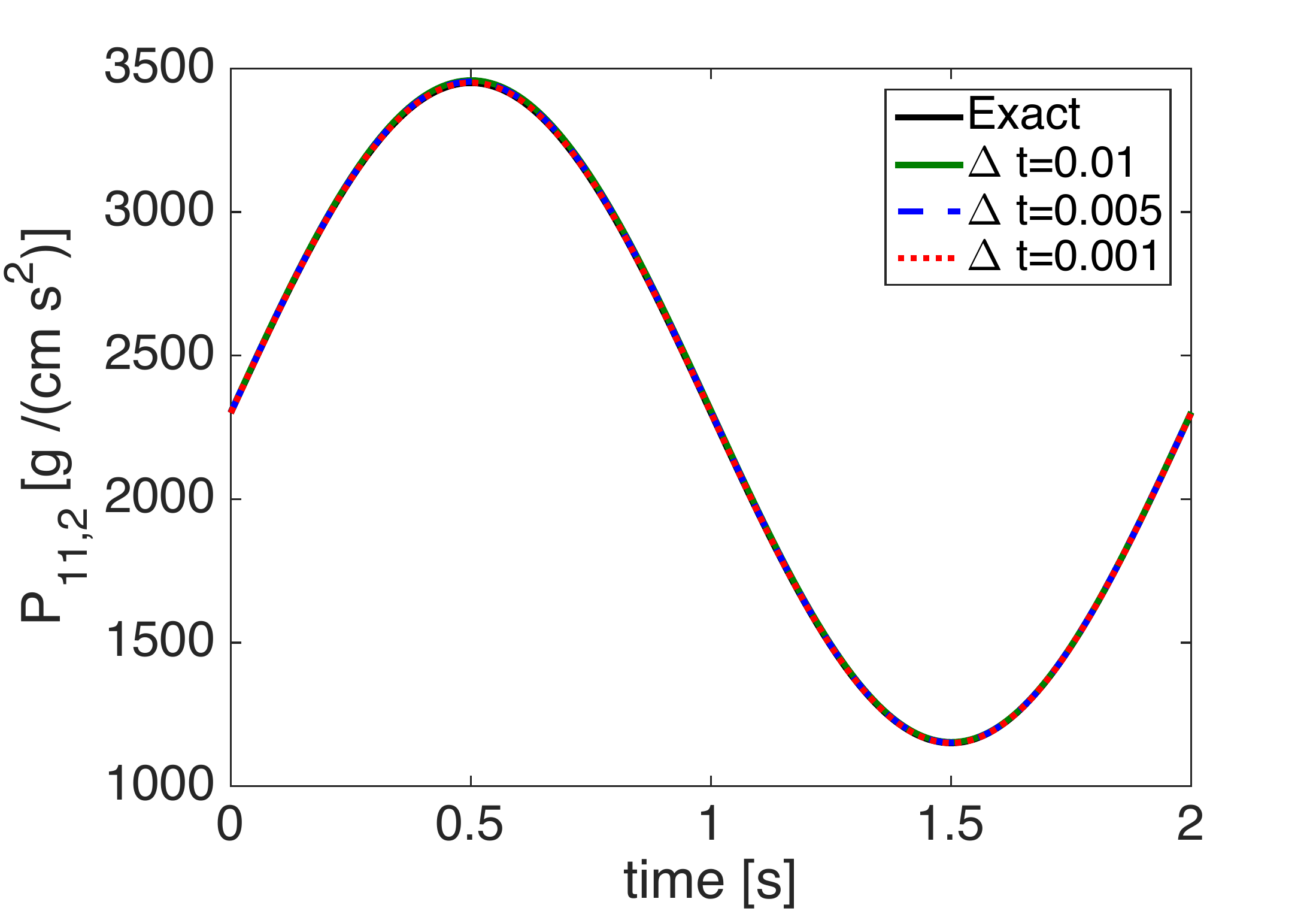}};
\node(label) at (-1.0,-5.8) {\includegraphics[width=0.16\linewidth]{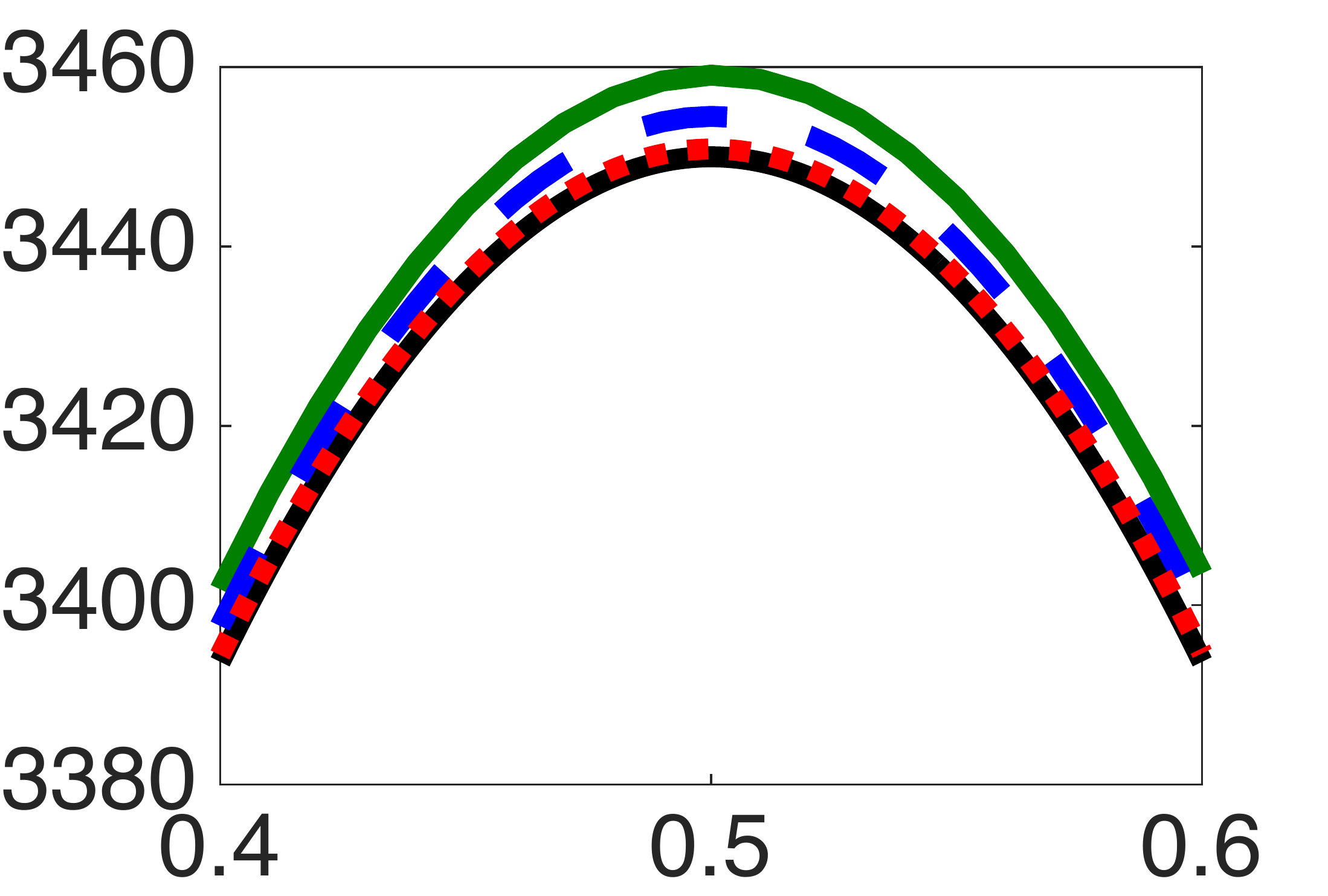}};
\begin{scope}[shift={(-1.196,-3.4)},yscale=1.1]
\draw[draw=gray] (0,0) rectangle (0.5,0.4);
 \draw[gray] (0,0) -- (-0.45,-1.68);
  \draw[gray] (0.5,0) -- (0.98,-1.68);
\end{scope}
\node(label) at (7,-5) {\includegraphics[width=0.54\linewidth]{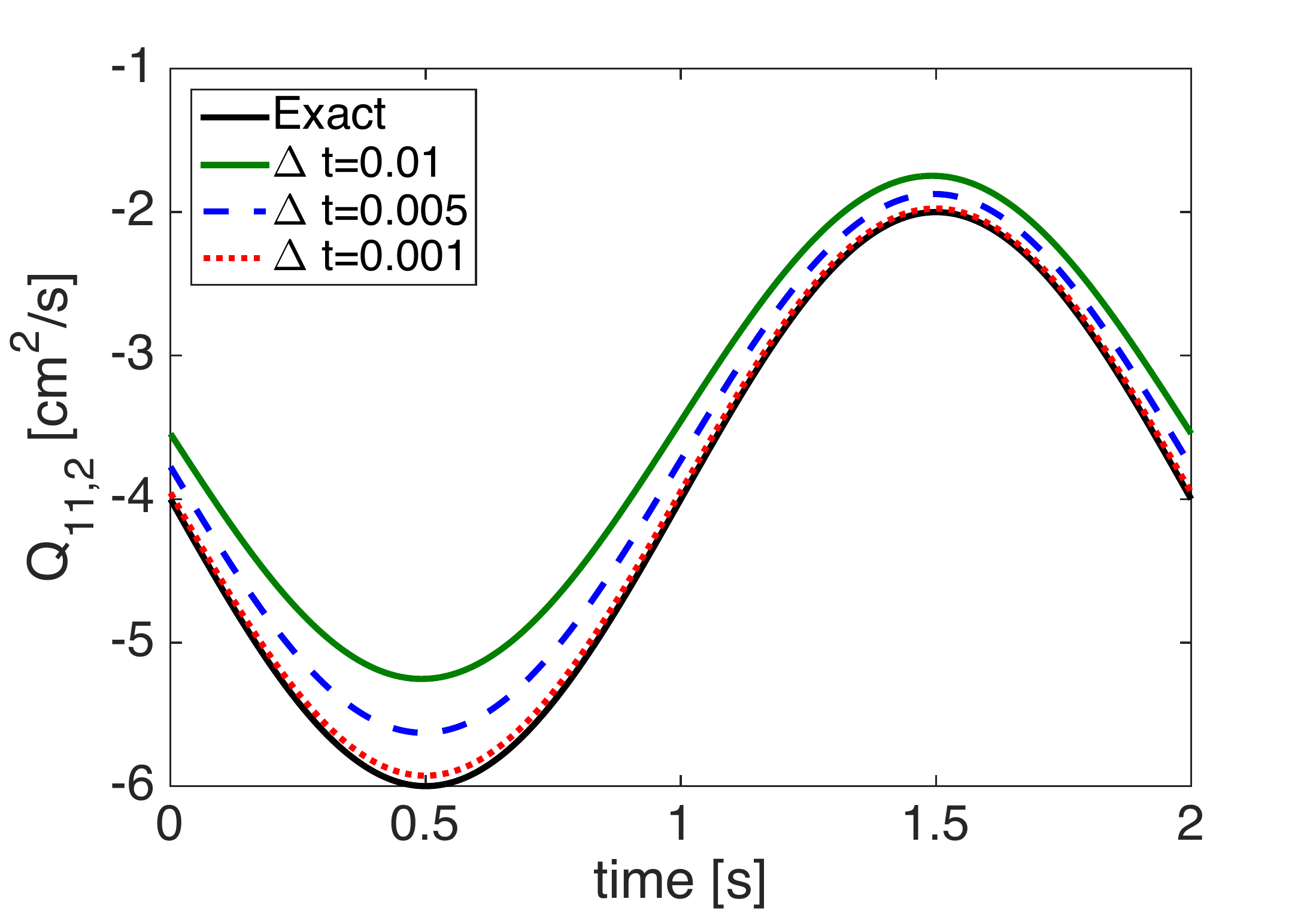}};
\end{scope}

\begin{scope}[shift={(0,-9.5)}]
  \coordinate [label=above:\textcolor{black}{\bf \textsc{0d} unknowns}] (E) at (3.7,2.2);  
\node(label) at (0,0) {\includegraphics[width=0.54\linewidth]{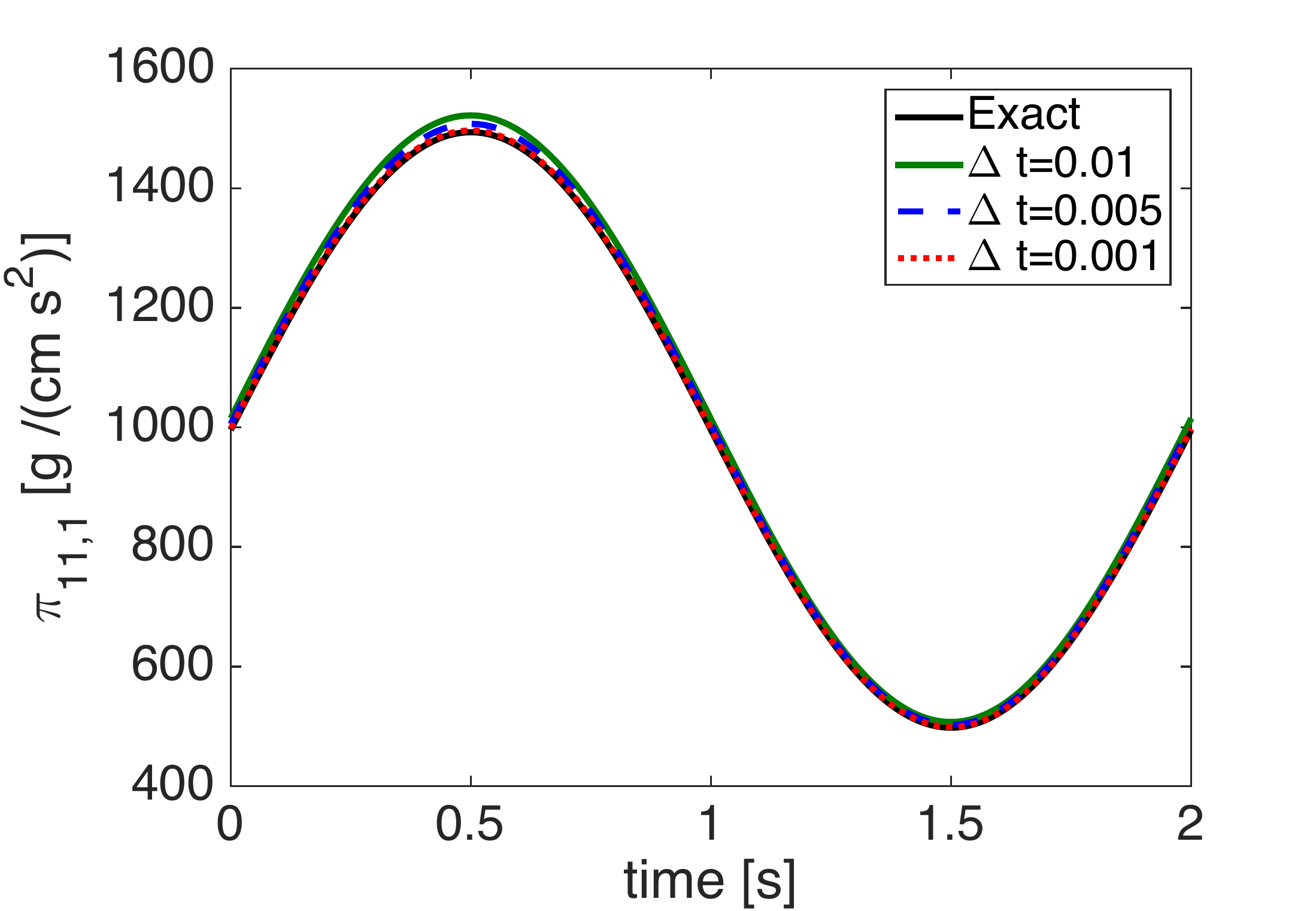}};
\node(label) at (-1.0,-0.8) {\includegraphics[width=0.16\linewidth]{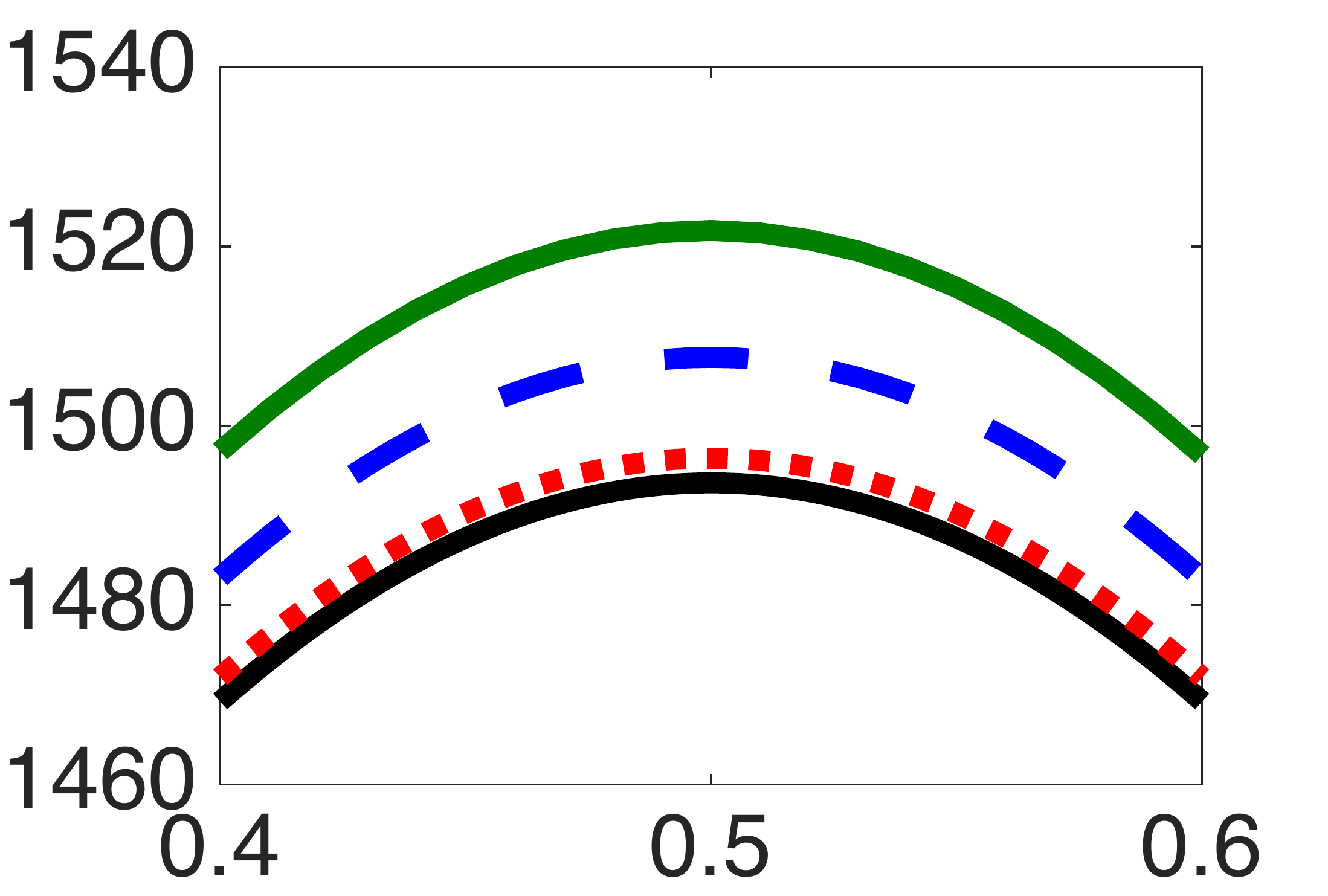}};
\begin{scope}[shift={(-1.2,1.37)},yscale=1.1]
\draw[draw=gray] (0,0) rectangle (0.5,0.4);
 \draw[gray] (0,0) -- (-0.45,-1.462);
  \draw[gray] (0.5,0) -- (0.98,-1.462);
\end{scope}
\node(label) at (7,-0) {\includegraphics[width=0.54\linewidth]{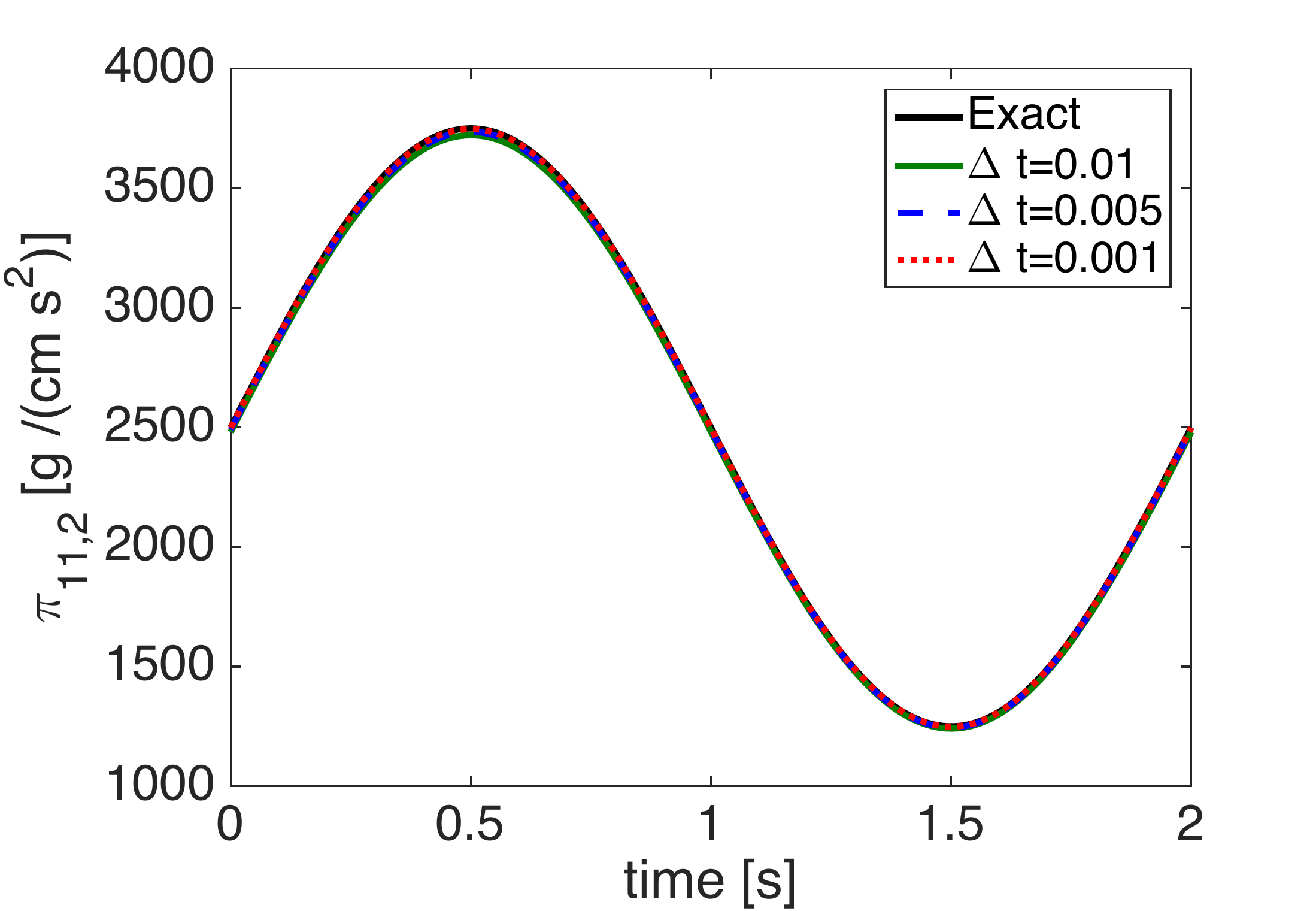}};
\node(label) at (6,-0.8) {\includegraphics[width=0.16\linewidth]{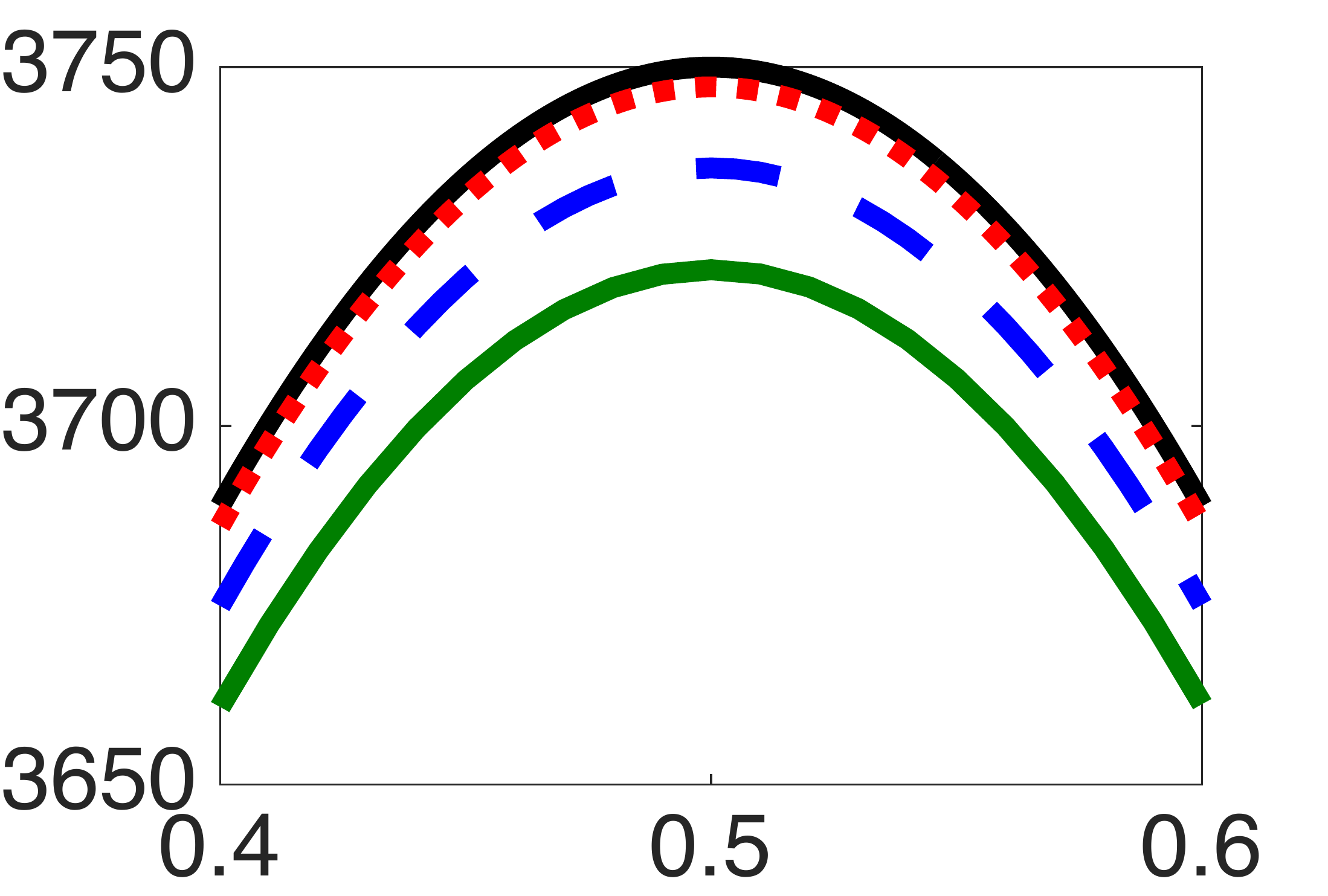}};
\begin{scope}[shift={(5.8,1.35)},yscale=1.1]
\draw[draw=gray] (0,0) rectangle (0.5,0.4);
 \draw[gray] (0,0) -- (-0.45,-1.45);
  \draw[gray] (0.5,0) -- (0.985,-1.45);
\end{scope}

\begin{scope}[shift={(3.7,0.5)}]
\node(label) at (0,-5) {\includegraphics[width=0.54\linewidth]{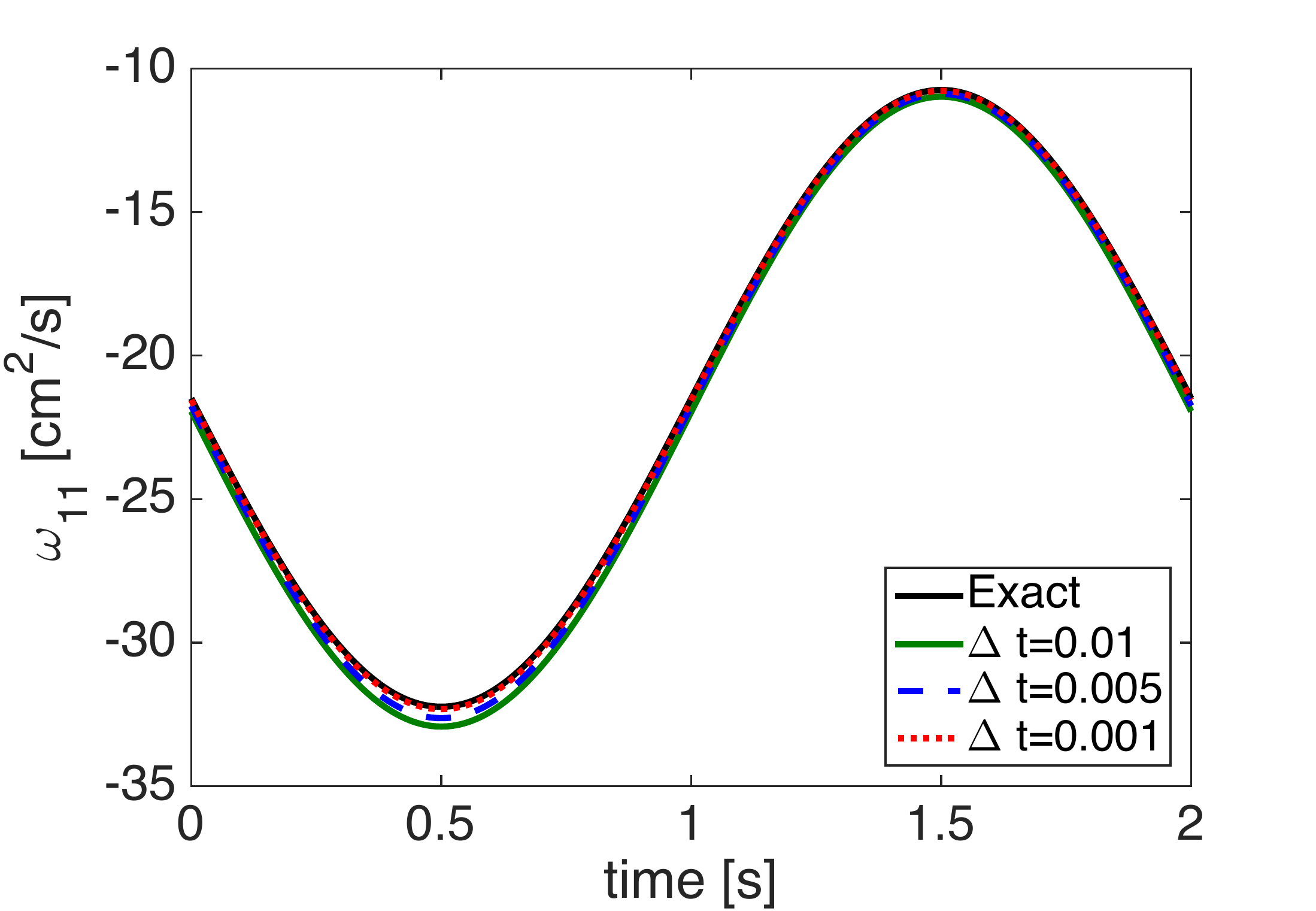}};
\node(label) at (-1.15,-3.8) {\includegraphics[width=0.16\linewidth]{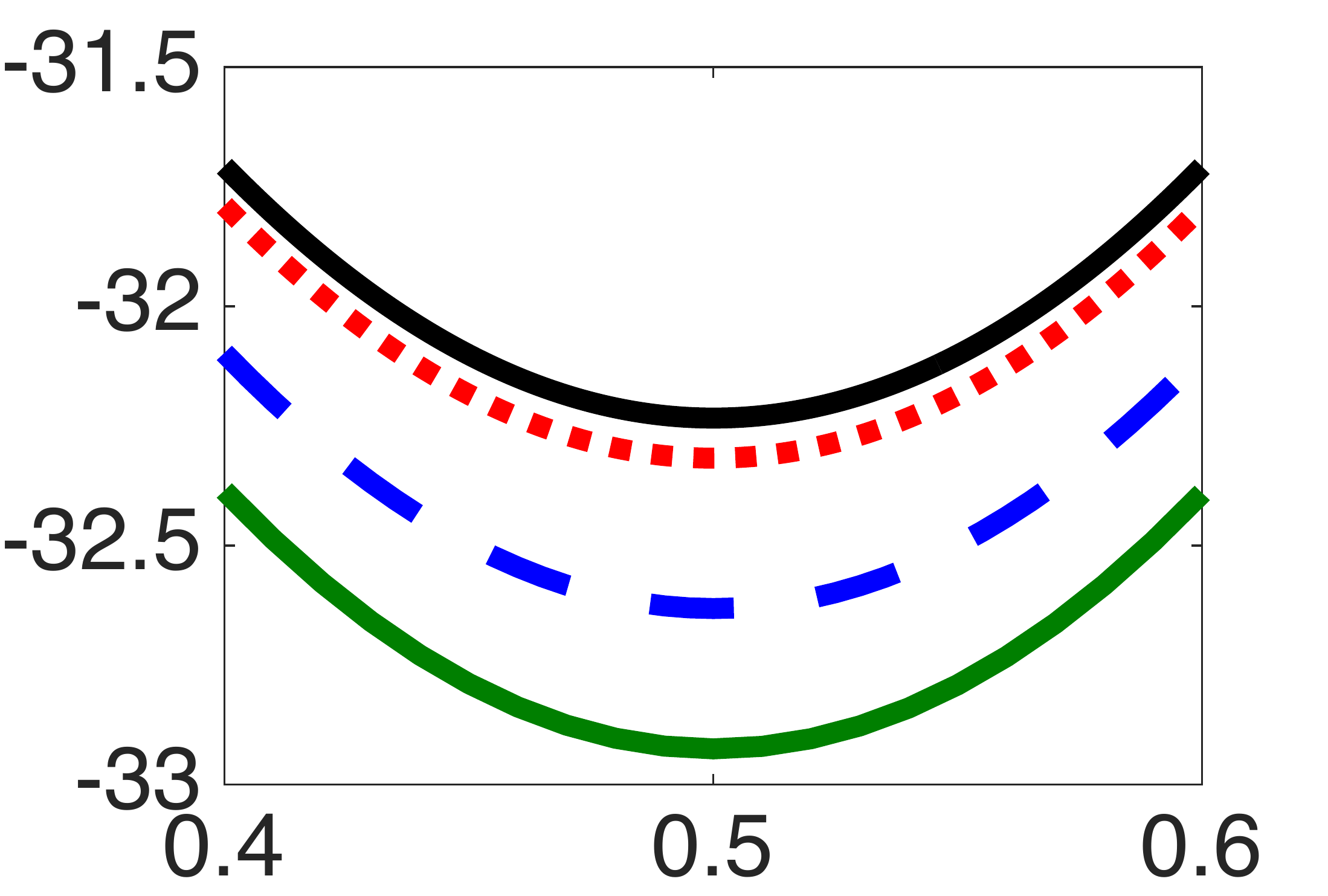}};
\begin{scope}[shift={(-1.33,-6.05)},yscale=-1.1]
\draw[draw=gray] (0,0) rectangle (0.5,0.4);
 \draw[gray] (0,0) -- (-0.455,-1.6);
  \draw[gray] (0.5,0) -- (0.968,-1.6);
\end{scope}
\end{scope}

\end{scope}
\end{tikzpicture}}
    \caption{\textit{Example 3.} Comparison between the exact solution and the corresponding numerical approximation for 
    interface quantities and \textsc{0d} unknowns, for three time steps $\Delta t=0.01,\ 0.005,\ 0.001$, over one time period once the numerical periodicity has been reached.}
  \label{fig:num_result_ex3}
\end{figure}

\noindent{\bf Order of convergence in time.}
The time-discretization scheme presented in this article is based on a first order operator splitting technique, which is known to be first-order
accurate in time. We performed a standard time refinement study for Example 1, Example 2 and Example 3, using the errors defined in~\eqref{eq:err_comput_v}-\eqref{eq:err_comput_y}, that confirmed this property, as shown in Figure~\ref{fig:err_ex123}.
\begin{figure}[htbp]
\centering
      \begin{subfigure}[b]{0.325\textwidth}
        \centering
        \hspace{-0.4cm}
    \includegraphics[width=1\linewidth]{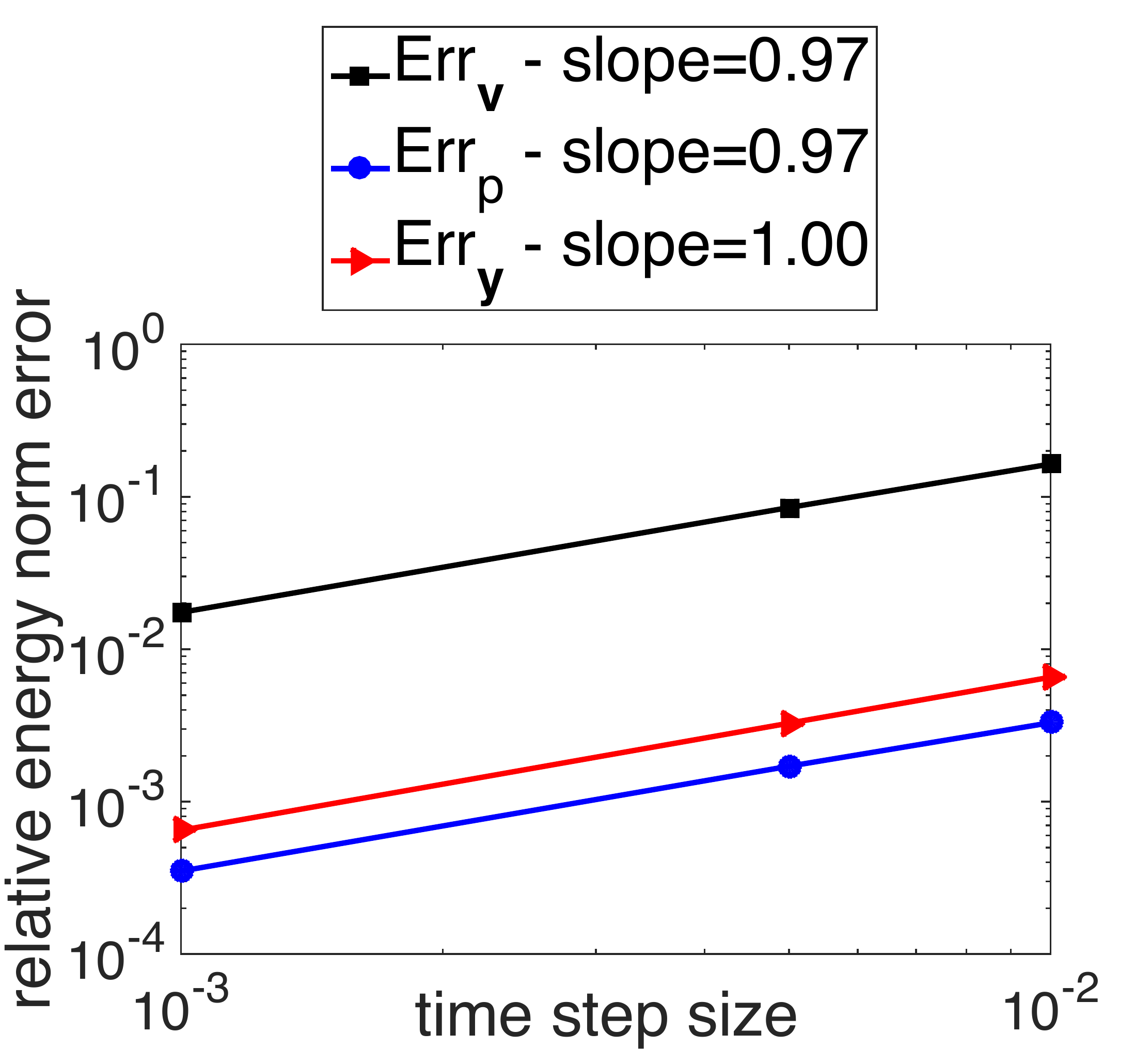}
        \caption{\textit{Example 1}}
        \label{fig:err_ex1}
    \end{subfigure}
      \begin{subfigure}[b]{0.32\textwidth}
        \centering
    \includegraphics[width=1\linewidth]{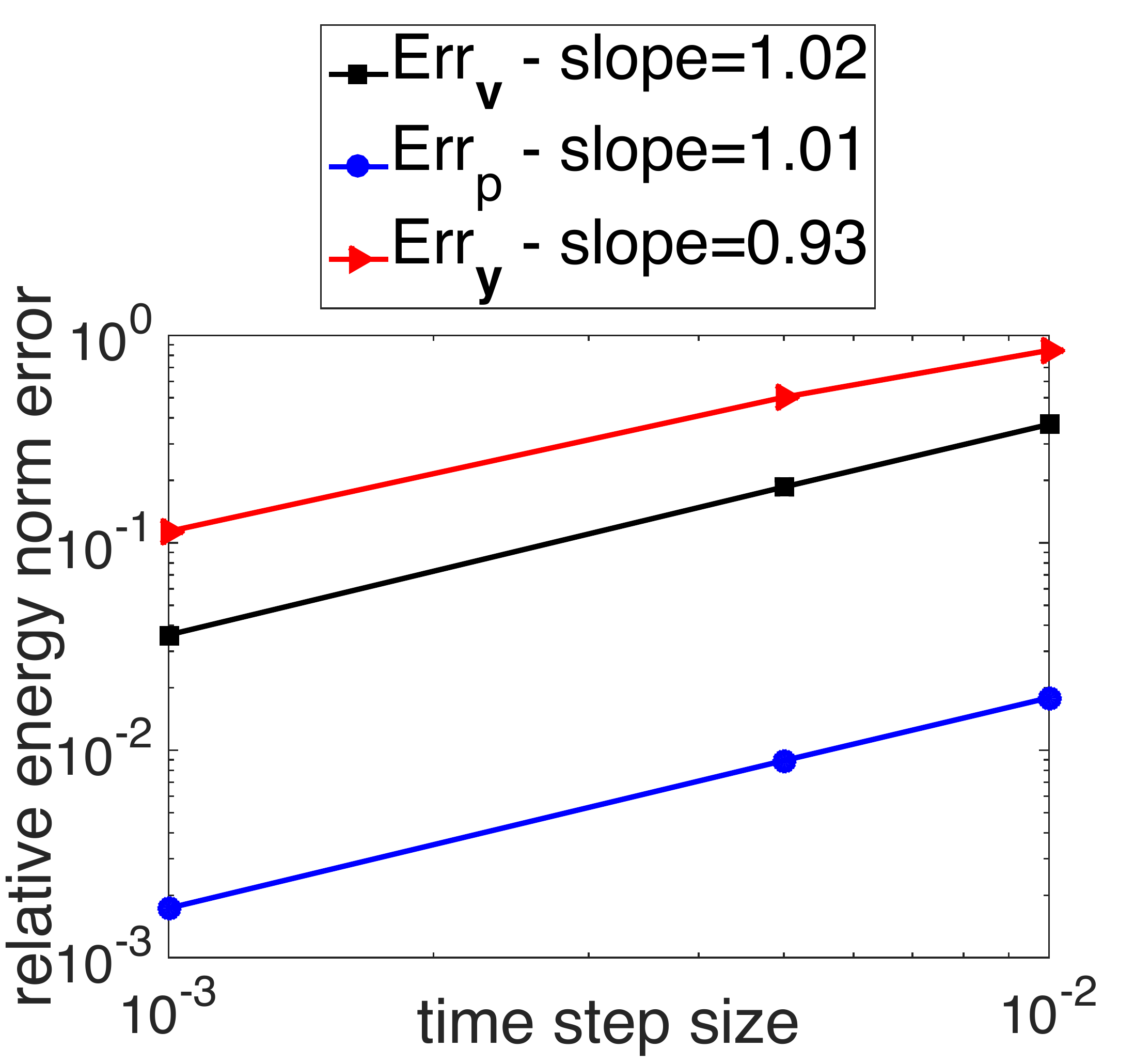}
        \caption{\textit{Example 2}}
        \label{fig:err_ex2}
    \end{subfigure}
      \begin{subfigure}[b]{0.32\textwidth}
        \centering
    \includegraphics[width=1\linewidth]{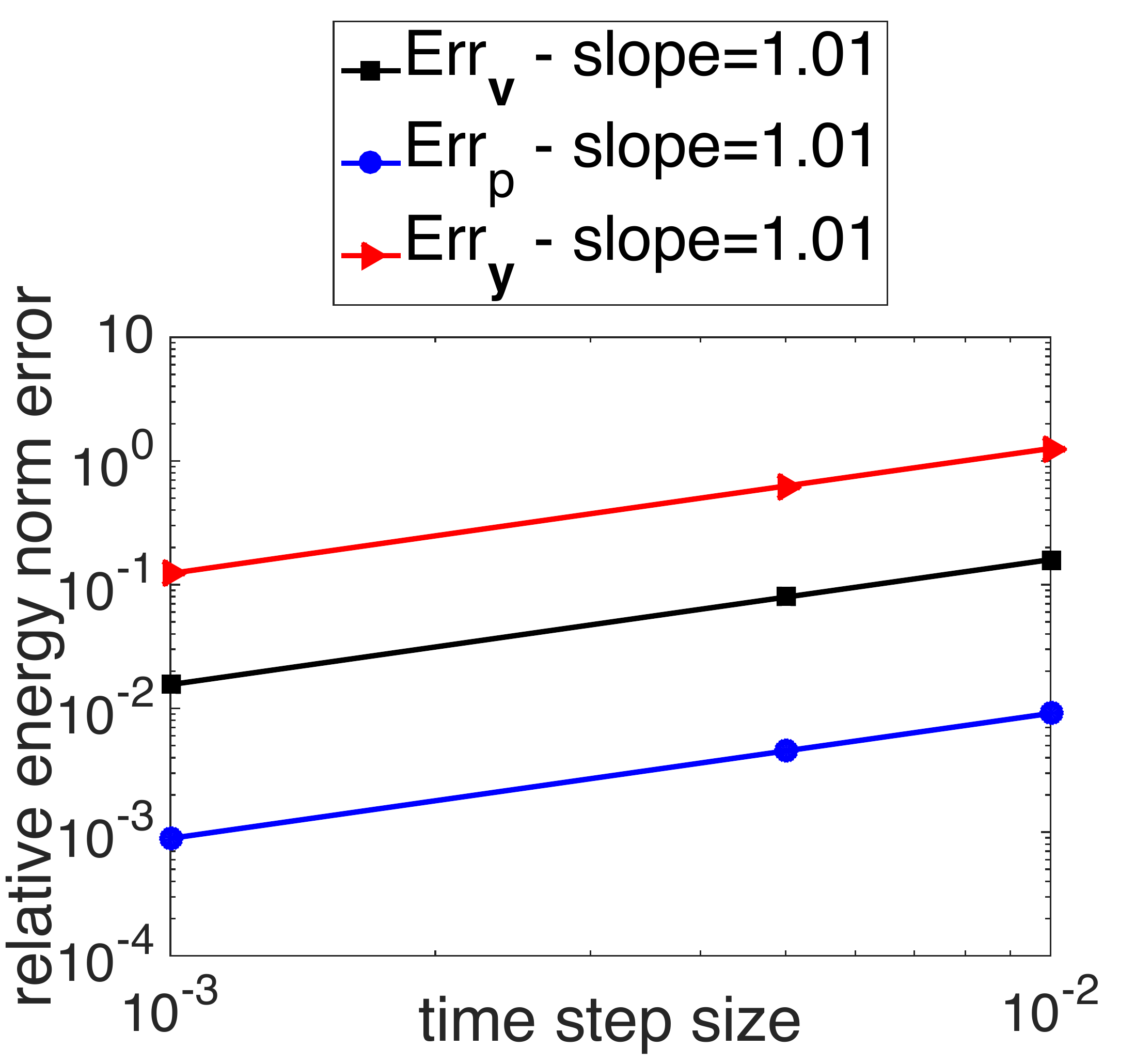}
        \caption{\textit{Example 3}}
        \label{fig:err_ex3}
    \end{subfigure}    
 \caption{Plot of the energy norm errors defined in~\eqref{eq:err_comput_v}-\eqref{eq:err_comput_y} in logarithmic scale as a function of the global time step $\Delta t=0.01,0.005,0.001$ for the three examples considered.}
 \label{fig:err_ex123}
\end{figure}

\section{Conclusions and Future Perspectives}
\label{sec:extensions}

This work presents a novel energy-based operator splitting approach for the numerical solution of multiscale problems involving the coupling between Stokes equations and ODE systems. 
Unconditional stability with respect to the time step choice is ensured by the implicit treatment of interface conditions within the Stokes substeps, whereas the coupling between 
Stokes and ODE substeps is enforced via appropriate initial conditions for each substep. Notably, the splitting design has been driven by the rationale of ensuring that 
the physical energy balance is maintained at the discrete level and, as a consequence, unconditional stability is attained without the need of subiterating between substeps. 
Results were presented in the case of \textsc{2d}-\textsc{0d} coupling, for the sake of simplicity. However, the \textsc{3d}-\textsc{0d} case does not present any additional conceptual issue from the splitting viewpoint. Upon comparison with exact solutions, numerical results show a better agreement for pressures than for flow rates at the Stokes-circuit interfaces. This might be a consequence of the simple finite element approach adopted here, where pressures are primal variables but flow rates are not.

In this work, the splitting algorithm is presented in its simplest version, which is at most first-order accurate in time. It has the
advantages of easy implementation, low computational cost and good stability and robustness properties. In addition, the framework presented here could serve as a basis for extensions
to 
\textit{(i)} \textit{higher order time-discretization methods} via, for example, symmetrization  \cite{glowinski2003} or time-extrapolation \cite{Fernandez2013, Fernandez2015};
\textit{(ii)} \textit{different PDE systems} in each $\Omega_l$ region deriving, for example, from Navier-Stokes equations~\cite{glowinski2006}, fluid-structure 
interactions~\cite{guidoboni2009,bukac2016}, non-Newtonian fluid flows~\cite{dean2007} or porous media flows~\cite{prada2016thesis,sala2017}; and \textit{(iii)} \textit{different spatial discretization methods} for each PDE problem including, for example, higher order 
finite element methods~\cite{karniadakis2013} or hybridizable discontinuous Galerkin methods~\cite{prada2016thesis,cockburn2009,bociu2016}. The extensions mentioned above are 
relatively straight-forward, since the algorithms described in the corresponding references are directly implementable on that presented in this work. On the other hand, 
extensions to more general coupling architectures 
\revA{require further investigation, possibly involving defective interface conditions, as discussed in~\cite{formaggia2002numerical}, or special numerical strategies depending on the 
physics of the multiscale connections, as discussed in~\cite{moghadam2013}.}

\revA{The case involving the Navier-Stokes equations deserves particular mention for its relevance to the multiscale modeling of blood flow. In this case, the mathematical problem to be 
solved in Step 1 would also include nonlinearites due to advection which, if not treated properly, might disrupt the physical energy balance at the multiscale 
interface~\cite{formaggia2013physical}. Following the approach described in~\cite{glowinski2003} and implemented in~\cite{glowinski2006,guidoboni2009}, we would again adopt the operator 
splitting viewpoint and modify the overall algorithm by introducing an additional step where the advection is treated separately using energy-preserving schemes, such as the wave-like 
method described in~\cite{glowinski2003}. By doing so, the physical energy balance is maintained from the continuous to the discrete level also in the case of the Navier-Stokes equations, 
thereby providing stability for the overall algorithm.}

\section*{Appendix: Exact solutions of numerical examples}
\noindent{\bf Example 1.}
A direct computation shows that the coupled problem~\eqref{eq:st_div}-\eqref{eq:st_ic_q} with~\eqref{eq:test1_A}-\eqref{eq:flow_rate_11} admits the exact solution
\begin{align}
&\vector{v}_1(x_1,x_2,t)  = \left[ s(t) \exV (x_2), \, 0 \right]^T,  \qquad \qquad p_1(x_1,x_2,t)= s(t) \exP(x_1), \\
&\pi_{11,1} (t)  = P_{11,1}(t) - R_{11,1} Q_{11,1}(t), \\
& \omega_{11} (t) = \frac{1}{2\gamma_1} {\Bigg\{ } -1 \label{eq:y12_ex1}\\
& \qquad + \left. \sqrt{1+4\gamma_1\overline{C}_{a}\left[\pi_{11,1}(t) - R_{a}(\vector{y}_1,t) \left( Q_{11,1}(t)- C_{11,1} \frac{d\pi_{11,1}}{dt}(t) \right) \right]}\;  \right\},  \nonumber
\end{align}
where
\begin{align}
&s(t) = s_0+s_1 \sin(\omega\,t), & &\exV(x_2) = V_0 \cos^2\left(\frac{\pi x_2}{H}\right), \\
&\exP(x_1)  = a_0 + a_1 \exp(-k x_1), & & P_{11,1}(t) = s(t) \exP(L), \\
&Q_{11,1}(t) = \frac{V_0H}{2} s(t), & & R_{a}(\vector{y}_1,t)  = \overline{R}_{a} + \frac{\alpha_0}{1 + \alpha_1 e^{ - \alpha_2 \pi_{11,1}(t) }}, \label{eq:test1:NLR} \\
 &C_{a} (\vector{y}_1,t) = \frac{\overline{C}_{a}}{1+\gamma_1 \omega_{11}(t)} \label{eq:test1:NLC},
\end{align}
provided that the forcing terms are given by
\begin{align}
&
\vector{f}_1(x_1,x_2,t) = 
\left[
\frac{ds}{dt}(t) \exV(x_2) -\frac{\mu}{\rho} s(t) \frac{d^2\exV}{dx_2^2}(x_2) + \frac{s(t)}{\rho} \frac{d\exP}{dx_1}(x_1) , \;
0
\right]^T,\label{eq:test1_force}\\
& \overline{p}_1(t) = s(t) \exP(0),\\
& \widetilde{p} (t)= R_{b}\frac{d \omega_{11}}{dt}(t) - \frac{R_{b}}{R_{a}(\vector{y}_1,t)} \pi_{11,1}(t)+ \frac{R_{b}}{C_{a}(\vector{y}_1,t)}\left(\frac{1}{R_{a}(\vector{y}_1,t)}+\frac{1}{R_{b}} \right) \omega_{11}(t),
\end{align}
and the initial conditions are equal to the exact solution evaluated at $t=0$.

It may be useful to notice that the expression for the volume $\omega_{11}$ in equation \eqref{eq:y12_ex1} results from enforcing the volume-pressure relationship 
\begin{equation}
\omega_{11}(t) = C_{a}(\vector{y}_1,t) \left[ \pi_{11,1}(t) - R_{a}(\vector{y}_1,t) \left( Q_{11,1} (t)- C_{11,1} \frac{d\pi_{11,1}}{dt}(t) \right) \right],
\end{equation}
where $C_{a}$ is given by \eqref{eq:test1:NLC}.
All the parameter values involved in the exact solution  are listed in Table~\ref{tab:param_test123}. Since this example is set in a \textsc{2d} context, 
flow rates should be interpreted per unit of length and the units of resistance and capacitance should scale accordingly.
We remark that the case of constant resistance and capacitance, namely $R_{a}(\vector{y}_1,t)=\overline{R}_{a}$ and $C_{a}(\vector{y}_1,t)=\overline{C}_{a}$, 
corresponds to the choice of $\alpha_0 = 0$ and $\gamma_1=0$.  In this case though, the expressions for $\omega_{11}(t)$ in \eqref{eq:y12_ex1} simplifies to 
\begin{equation}
\omega_{11}(t)=\overline{C}_{a}\left[\pi_{11,1}(t) - \overline{R}_{a}\left( Q_{11,1}(t)- C_{11,1} \frac{d\pi_{11,1}}{dt}(t) \right) \right].
\end{equation}

\noindent{\bf Example 2.}
Using a similar approach to the one developed for Example 1, it can be shown that the coupled problem~\eqref{eq:st_div}-\eqref{eq:st_ic_q} with~\eqref{eq:test2_A}-\eqref{eq:test2_flow_rate} admits the following exact solution
\begin{align}
 &\vector{v}_l(x_1,x_2,t)  = \left[s_l(t) \exV_l (x_2),\, 0 \right]^T, \quad  p_l(x_1,x_2,t) = s_l(t) \exP_l(x_1)\quad l=1,2, \\
&\pi_{11,1} (t) = P_{11,1}(t) - R_{11,1} Q_{11,1}(t),\\
&\pi_{21,1} (t) = \pi_{11,1}(t)- R_{a} \omega_{11} (t) - L_{a} \frac{d\omega_{11}}{dt}(t),\\
&\omega_{11} (t) = Q_{11,1}(t) - C_{11,1} \frac{d\pi_{11,1}}{dt}(t),
\end{align}
where
\begin{align}
 &\exV_l(x_2) = V_0 \cos^2\left(\frac{\pi x_2}{H}\right), & & \exP_l(x_1)  = a_{0l} + a_{1l} \exp(-k x_1),\\
 &P_{l1,1}(t) = s_l(t) \exP_l(L) & & l=1,2,\\
 &s_1(t) = s_0+s_1 \sin(\omega\,t), & & s_2(t) = \frac{1}{a_{02} + a_{12} + R_{21,1}V_0 \frac{H}{2}} \pi_{21,1}(t), \\
 &Q_{11,1}(t) = \frac{V_0H}{2} s_1(t), & & Q_{21,1}(t) = - \frac{V_0H}{2} s_2(t),
\end{align}
provided that the forcing terms are given by
\begin{align}
&
\vector{f}_l(x_1,x_2,t) = 
\left[
\frac{ds_l}{dt}(t) \exV_l(x_2) -\frac{\mu}{\rho} s_l(t) \frac{d^2\exV_l}{dx_2^2}(x_2) + \frac{s_l(t)}{\rho} \frac{d\exP_l}{dx_1}(x_1),\,
0\right]^T, \\
& \overline{p}_l(t) = s_l(t) \exP_l(0), \\
& \widetilde{p}(t) = -R_{b} \omega_{11}(t) + \pi_{21,1}(t) - R_{b} Q_{21,1}(t) + R_{b} C_{21,1} \frac{d \pi_{21,1}}{dt} (t),
\end{align}
for $l=1,2$, and the initial conditions are equal to the exact solution evaluated at $t = 0$.

All the parameter values involved in the exact solution  are listed in Table~\ref{tab:param_test123}, with the same convention on the units used in Example 1.\\
\noindent{\bf Example 3.}
Proceeding as in the previous examples, it can be shown that the coupled problem~\eqref{eq:st_div}-\eqref{eq:st_ic_q} with~\eqref{eq:test3_A}-\eqref{eq:test3_flow_rate} admits the exact solution
\begin{align}
& \vector{v}_1(x_1,x_2,t)  = \left[s(t) \exV (x_2), \, 0\right]^T, \qquad \; \; p_1(x_1,x_2,t) = s(t) \exP(x_1), \\
&  \pi_{11,1} (t)  = P_{11,1}(t) - R_{11,1} Q_{11,1}(t), \quad \pi_{11,2} (t)  = P_{11,2}(t) - R_{11,2} Q_{11,2}(t), \\
& \omega_{11} (t)  = \Lambda_1 + \Lambda_2\sin(\omega t) + \Lambda_3\cos(\omega t),  \label{eq:test3_exy13}
\end{align}
where
\begin{align}
&s(t) = s_0+s_1 \sin(\omega\,t), &&\exV(x_2) = V_0 \cos^2\left(\frac{\pi x_2}{H}\right),\\
&\exP(x_1)  = a_0 + a_1 \exp(-k x_1),&& P_{11,1}(t) = s(t) \exP(L), \\ 
&P_{11,2}(t) = s(t) \exP(0), && Q_{11,1}(t) = -  Q_{11,2}(t) = \frac{V_0H}{2} s(t),\\
& \Lambda_1 = s_0 \exC, && \Lambda_2 = s_1\exC \left[ \left(\Frac{\omega L_{c}}{R_{c}}\right)^2 +1 \right]^{-1},\\
& \Lambda_3 = -\Frac{\omega \Lambda_2 L_{c}}{R_{c}}, && \exC = \Frac{ \exP(L)  - \exP(0) - \left(R_{11,1} + R_{11,2}\right)  \frac{V_0H}{2} }{R_{c}},
\end{align}
provided that the forcing terms are $\vector{f}_1(x_1,x_2,t)$ as in~\eqref{eq:test1_force} and
\begin{align}
& \widetilde{p}_a (t)= R_{a}C_{11,1}\frac{d \pi_{11,1}}{dt}(t) +\pi_{11,1}(t) + \left( \omega_{11}(t) - Q_{11,1}(t)\right) R_{a},\\
& \widetilde{p}_b (t)= R_{b}C_{11,2}\frac{d \pi_{11,2}}{dt}(t) +\pi_{11,2}(t) - \left( \omega_{11}(t) + Q_{11,2}(t)\right) R_{b},
\end{align}
and the initial conditions are equal to the exact solution evaluated at time $t=0$.

Note that in~\eqref{eq:test3_exy13}, for simplicity, we consider only the particular solution of the ODE corresponding to $\omega_{11}$, setting to zero the coefficient corresponding to the homogeneous solution. All the parameter values involved in the exact solution are listed in Table~\ref{tab:param_test123}, with the same convention on the units used in the previous examples. 
\begin{table}
\scalebox{0.88}{
\centering
\begin{tabular}{llllll}
\toprule
\multicolumn{6}{c}{\bf Common parameters}\\
\toprule
{\textit{Param.}}	&  {\textit{Value}} & {\textit{Param.}}	&  {\textit{Value}} & {\textit{Param.}}	&  {\textit{Value}}\\
\midrule
$H$ 			& 2 cm 					& $L$ 				& 10 cm 				& $\rho$ 					& 1 g cm$^{-3}$\\
$\mu$ 		& 1 g cm$^{-1}$ s$^{-1}$		&  $V_0$		                 & 2 cm s$^{-1}$                 & $\omega$			         & $\pi$ s$^{-1}$\\
$k$			& 0.1 cm$^{-1}$                         & $s_0$				& 2 					& $s_1$					& 1  \\
\toprule
\multicolumn{6}{c}{\bf Example 1 parameters}\\
\toprule
{\textit{Param.}}	&  {\textit{Value}} & {\textit{Param.}}	&  {\textit{Value}} & {\textit{Param.}}	&  {\textit{Value}}\\
\midrule
$R_{11,1}$ 	& 10 g cm$^{-3}$ s$^{-1}$		& $\overline{R}_{a}$ 	& 10 g cm$^{-3}$ s$^{-1}$	 & $R_{b}$ 				& 10 g cm$^{-3}$ s$^{-1}$	\\
$\alpha_0$ 	& 10 g cm$^{-3}$ s$^{-1}$	 	& $\alpha_1$ 			& 1					& $\alpha_2$ 				& 0.001 g$^{-1}$ cm s$^2$  \\
$C_{11,1}$ 	& 0.001 g$^{-1}$ cm$^3$ s$^{2}$	& $\overline{C}_{a}$ 	& 0.01 g$^{-1}$ cm$^3$ s$^{2}$					& $\gamma_1$				& 1 cm$^{-2}$ \\
 $a_0$				& 150 g cm$^{-1}$ s$^{-2}$	& $a_1$					& 1000 g cm$^{-1}$ s$^{-2}$\\
\toprule
\multicolumn{6}{c}{\bf Example 2 parameters}\\
\toprule
{\textit{Param.}}	&  {\textit{Value}} & {\textit{Param.}}	&  {\textit{Value}} & {\textit{Param.}}	&  {\textit{Value}}\\
\midrule
$R_{11,1}$ 	& 10 g cm$^{-3}$ s$^{-1}$	    & $R_{a}$  	& 10 g cm$^{-3}$ s$^{-1}$	& $R_{21,1}$ 	 & 10 g cm$^{-3}$ s$^{-1}$	\\
$R_{b}$ 	        & 10 g cm$^{-3}$ s$^{-1}$	    & $C_{11,1}$ 	& 0.001 g$^{-1}$ cm$^3$ s$^{2}$	    & $ C_{21,1}$		& 0.001 g$^{-1}$ cm$^3$ s$^{2}$  \\
 $L_{a}$ 	 & 0.003 g cm$^{-3}$ 	    & $a_{01}$ 	        & 150 g cm$^{-1}$ s$^{-2}$	& $a_{11}$	 & 1000 g cm$^{-1}$ s$^{-2}$\\
$a_{02}$		        & 75  g cm$^{-1}$ s$^{-2}$	& $a_{12}$	 & 500 g cm$^{-1}$ s$^{-2}$\\
\toprule
\multicolumn{6}{c}{\bf Example 3 parameters }\\
\toprule
{\textit{Param.}}	&  {\textit{Value}} & {\textit{Param.}}	&  {\textit{Value}} & {\textit{Param.}}	&  {\textit{Value}}\\
\midrule
$R_{11,1}$ 	& 10 g cm$^{-3}$ s$^{-1}$	    & $R_{11,2}$ 	        & 50 g cm$^{-3}$ s$^{-1}$	& $R_{a}$ 	 & 10 g cm$^{-3}$ s$^{-1}$	\\
$ R_{b}$		& 10 g cm$^{-3}$ s$^{-1}$ & $R_{c}$  	& 70 g cm$^{-3}$ s$^{-1}$	    & $L_{c}$ 	 & 0.003 g cm$^{-3}$    \\
$C_{11,1}$ 	& 0.001 g$^{-1}$ cm$^3$ s$^{2}$	    & $C_{11,2}$ 	& 0.001 g$^{-1}$ cm$^3$ s$^{2}$	& $a_0$				& 150 g cm$^{-1}$ s$^{-2}$	\\
$a_1$					& 1000 g cm$^{-1}$ s$^{-2}$\\
\bottomrule
\end{tabular}
}
\caption{Parameter values for Examples 1, 2 ad 3. Examples are set in a two-dimensional context, hence flow rates are per unit of length and the units of resistance and capacitance scale accordingly.}
\label{tab:param_test123}
\end{table}

\section*{Acknowledgements}
The present work has been partially supported by the National Science Foundation grant DMS-1224195, Chair Gutenberg research funds awarded by the Cercle Gutenberg (France),
a STEM Chateaubriand Fellowship awarded by the Embassy of France in the United States and research funds awarded by the LabEx IRMIA (University of Strasbourg, France).

 
\bibliography{ms}

\end{document}